\numberwithin{equation}{section}
\renewcommand{\div}{\mathrm{div}}
\newcommand{\N}{\mathbb{N}}
\newcommand{\R}{\mathbb{R}}
\newcommand{\C}{\mathbb{C}}
\renewcommand{\S}{\mathbb{S}}
\newcommand{\cA}{\mathcal{A}}
\newcommand{\cB}{\mathcal{B}}
\newcommand{\cC}{\mathcal{C}}
\newcommand{\cD}{\mathcal{D}}
\newcommand{\cF}{\mathcal{F}}
\newcommand{\cG}{\mathcal{G}}
\newcommand{\cH}{\mathcal{H}}
\newcommand{\cL}{\mathcal{L}}
\newcommand{\cN}{\mathcal{N}}
\newcommand{\cR}{\mathcal{R}}
\newcommand{\cS}{\mathcal{S}}
\newcommand{\bI}{\mathbf{I}}
\newcommand{\bO}{\mathbf{O}}
\newcommand{\bQ}{\mathbf{Q}}
\newcommand{\p}{\partial}
\newcommand{\ri}{{\rm i}}
\newcommand{\cl}[1]{\overline{#1}} 
\newcommand{\dist}{{\rm dist}}
\newcommand{\Lip}{{\rm Lip}}
\newcommand{\holder}{C}
\newcommand{\networks}{\mathcal{X} }
\newcommand{\pnt}{q}
\renewcommand{\hat}{\widehat}
\def\restriction#1|#2{{#1}_{\lower2pt\hbox{$\scriptstyle\vert{#2}$}}}
\newtheorem{theorem}{Theorem}[section]
\newtheorem{proposition}[theorem]{Proposition}
\newtheorem{lemma}[theorem]{Lemma}
\newtheorem{corollary}[theorem]{Corollary}
\theoremstyle{definition}
\newtheorem{definition}[theorem]{Definition}
\newtheorem{assumption}[theorem]{Assumption}
\theoremstyle{remark}
\newtheorem{remark}[theorem]{Remark}
\newtheorem{example}[theorem]{Example}
\title{Some aspects of anisotropic curvature flow of planar partitions}
\author[G. Bellettini] {Giovanni Bellettini} 
\address[G. Bellettini]{University of Siena \\ via Roma 56,  53100 Siena, Italy \\ \& \\ International Centre for Theoretical Physics (ICTP)\\ Strada Costiera 11, 34151 Trieste (Italy)}
\email{bellettini@diism.unisi.it}
\author[Sh. Kholmatov] {Shokhrukh Yu. Kholmatov} 
\address[Sh. Kholmatov]{Fakult\"at f\"ur Mathematik\\ 
Universit\"at Wien\\ Oskar-Morgenstern Platz 1\\1090 Wien 
(Austria)}
\email{shokhrukh.kholmatov@univie.ac.at}
\keywords{anisotropy, network, partition, triple junctions, crystalline curvature, curvature flow}
\subjclass[2020]{53E10, 35D30, 35D35}
\date{\today}
\begin{document}

\begin{abstract}
We consider the geometric evolution of a network in the plane, flowing by anisotropic curvature. We discuss local existence of a classical solution in the presence of several smooth anisotropies. Next, we discuss some aspects
of the polycrystalline case. 
\end{abstract}

\maketitle

\section{Introduction}

Many processes in material sciences such as phase transformation, crystal growth, domain growth, grain growth, ion beam and chemical etching, {\it etc.} can be modelled as a {\bf geometric interface motion} in which surface tension acts  as a principal driving force (see e.g. 
\cite{BBP:2001,BBBP:1997,Cahn:1991,CHT:1992,CK:1994,GGM:1998,Herring:1999,KL:2001,Mullins:1956,Taylor:1978,TCH:1992} and references therein). An interface (or surface boundary) in the plane is a curve bounding different 
regions (phases) and moving in a nonequilibrium state \cite{CDR:1989,DKSch:1993,Herring:1951,Schl:2000}. 

In some simplified cases 
the motion of this curve does not depend on the physical
situation in the various phases\footnote{The case
of two phases is usually called anisotropic curve shortening flow, and will not be addressed here;
we refer the reader to \cite{Andrews:1998,AB:2011}  and to \cite{Andrews:2002,GP:2022} in the crystalline case.},  and is described by geometric equations relating, for instance, the normal velocity of the interface to its curvature. 
The anisotropic curvature flow in two dimensions 
of a network $\Sigma$
is the formal gradient flow of the energy functional
$$
\ell_\phi(\Sigma):=\int_\Sigma  \phi^o (\nu_\Sigma)d\cH^1,
$$
where $\Sigma$ is a set of curves delimitating the various phases, and typically having triple
junctions, $\nu_\Sigma$ is a unit normal vector field to $\Sigma$ 
and the energy density $ \phi^o : \R^2\to[0,+\infty)$, 
sometimes called surface tension (or, generally,  anisotropy), 
is defined on $\mathbb S^1$ 
and its one-homogeneous extension on $ \R^2$ is
a norm. An interesting case is when $ \phi^o $ is crystalline, i.e., its unit ball $B_{\phi^o}$ is a (centrally symmetric) polygon. In such a case, one expects the phases to be mostly polygonal regions, which evolve under a sort 
of nonlocal curvature flow\footnote{The interest
is due mainly to the presence of facets and corners in $\p B_{\phi^o}$. However,
a mathematical obstruction is
represented by the possible appearence of nonpolygonal curves during the crystalline flow of a
network, arising from triple junctions.}.
More realistic is the case in which various anisotropies
are involved in the energy, i.e., $ \phi^o _{ij}$ 
is an anisotropy weighting the part of the network $\Sigma$
dividing phase $i$ from phase $j$. When all $ \phi^o _{ij}$ 
are crystalline, this is a model for polycrystalline materials \cite{ELM:2021,GN:2000}.

The aim of this paper is to 
discuss some aspects of the evolution of the network $\Sigma$ under
anisotropic curvature flow; for simplicity we do not include mobilities. We quickly review some known results when $ \phi^o $ is Euclidean, and discuss some aspects of the flow in the anisotropic and polycrystalline cases,
starting from the definition of what we mean by normal velocity. We will give some detail on the short time existence of a strong 
solution when $ \phi^o _{ij}$ are smooth and uniformly convex. The variational nature of the flow will be emphasized.

Nothing will be specified in this paper for weak
solutions to the flow: for this argument we refer
the reader to \cite{BH:2018.siam,BChKh:2020,LO:2016,Tonegawa:2019}.

G.B. is very grateful to Errico Presutti for having shared his deep knowledge on some aspects of mathematical physics and, above all, for his generosity.

{\bf Acknowledgements.} Sh. Kholmatov acknowledges support from the Austrian Science Fund (FWF) Stand-Alone project P 33716. G. Bellettini is a member of the GNAMPA (INdAM) of Italy.

\section{Notation}

We denote by $\cdot$ the Euclidean scalar product 
and by $|\cdot|$ the Euclidean norm in $ \R^2.$ Given $a,b\in \R^2,$ 
 $a\otimes b$ stands for the $2\times2$-matrix with entries 
$(a\otimes b)_{ij} = a_ib_j.$ The symbol $\cH^1$ stands for the $1$-dimensional Hausdorff measure in $ \R^2.$  We denote by $a^\perp$ the counterclockwise $90^o$-rotation of a nonzero vector $a\in \R^2,$ i.e., 
$$
a = (a_1,a_2)\quad\Longrightarrow \quad a^\perp = (-a_2,a_1).
$$
The (topological) boundary of a set $E\subset \R^2$ is denoted by $\p E.$

We identify both  tangent and cotangent spaces at a point 
of $ \R^2$ with (a copy of) $ \R^2.$

\subsection{Anisotropies}
We denote by $\phi: \R^2\to[0,+\infty)$ an anisotropy, 
i.e., a convex function such that 
$$
\phi(\lambda \xi) = |\lambda|\phi(\xi),\quad \phi(\xi) \ge c|\xi|,\quad \lambda\in \R,\quad \xi\in \R^2,
$$
for some $c>0.$ We let
 $$
 B_\phi:=\{\xi\in \R^2:\,\, \phi(\xi)\le1\}.
 $$
The dual of $\phi$ is defined as 
 $$
 \phi^o(\xi) = \max\limits_{\eta\in \R^2,\,\,\phi(\eta) = 1} \,\,\xi\cdot \eta,\quad \xi\in \R^2,
 $$
which turns out to be an anisotropy. Our convention is that $\phi$ measures $1$-vector fields and $\phi^o$ measures $1$-covectors fields (one-forms); so the
domain of $ \phi $ (resp. $ \phi^o $) is the tangent (resp. cotangent) space at a point of $ \R^2$. We do not use different symbols for the domain of $ \phi $ and $ \phi^o $.
Notice that $\phi^{oo} = \phi.$ 
$B_ \phi $ is sometimes called Wulff shape, and $B_{ \phi^o }$ Frank
diagram.

We say that $\phi$ is \emph{elliptic} if $\phi\in C^2( \R^2 \setminus \{0\})$ 
and
$$
\nabla^2\phi(\nu)\tau\cdot \tau \ge \bar c > 0
$$
for all $\tau,\nu\in\S^1$ with $\tau\cdot \nu=0$\footnote{If $\psi(\theta):=\phi(\cos\theta,\sin\theta)$, this inequality becomes $\psi + \psi''\ge \bar c$.}. One checks that if $\phi$ is elliptic 
then $\phi^o$ is also elliptic. It is well-known \cite[Chapter 1]{Cioranescu:} that $\phi \in C^1( \R^2 \setminus \{0\})$ 
if and only if $\phi^o$ is strictly convex\footnote{A function $f:\R^n\to \R$ is \emph{strictly convex} if for any $x,y\in \R^n$ and $\alpha\in(0,1)$ one has $f(\alpha x+ (1-\alpha) y) < \alpha f(x) + (1-\alpha)f(y).$}.
We say $\phi$ is \emph{crystalline} if $B_\phi$  is a convex polygon. 
It can be readily checked that $\phi$ is crystalline if and only if so is $\phi^o.$

In this paper we assume that an anisotropy and its dual are 
 either both elliptic or both crystalline. 
Even though some
notions that we are going to introduce hold also in other 
cases (for instance when $\phi$ is smooth but not strictly convex\footnote{We are not aware of any local 
existence result 
of network evolutions in these cases.}), and despite of
their interest, 
we shall not consider them here. 
 
\subsection{Curves}
A \emph{curve} in $ \R^2$ is the image of a continuous function $\sigma \in C^0([0,1]; \R^2).$  In this survey we consider only \emph{embedded} curves, i.e., 
with no self-intersections except the endpoints. If $\sigma(0) = \sigma(1),$ the curve is called \emph{closed}.
When $\sigma$ is $C^1$ (resp. Lipschitz) and $|\sigma'| > 0$ in $[0,1]$ (resp. a.e. in $[0,1]$), the map $\sigma$ is called a regular parametrization of $\Sigma:=\sigma([0,1]).$  A curve is called $C^{k+\alpha}$ for some $k\ge0$ and $\alpha\in[0,1],$ if it admits a regular $C^{k+\alpha}$-parametrization.
The tangent line to $\Sigma$ at its point $\pnt$ is denoted by $T_{\pnt}\Sigma.$ The (Euclidean) unit tangent vector to $\Sigma$ at $\pnt$ is denoted by $\tau_\Sigma(\pnt)$ and the unit normal vector is $\nu_\Sigma(\pnt) = \tau_\Sigma(\pnt)^\perp.$ Namely, if $\pnt = \sigma(x),$ then 
$$
\tau_\Sigma^{}(\pnt) = \frac{\sigma'(x)}{|\sigma'(x)|}
\qquad\text{and}\qquad 
\nu_\Sigma^{}(\pnt) = \frac{\sigma'(x)^\perp}{|\sigma'(x)|}.
$$

\begin{definition} 
Given $\pnt \in\p \Sigma$ and a nonzero vector $z\in  \R^2\setminus T_{\pnt}\Sigma$ (in case $T_{\pnt}\Sigma$ exists)  we write 
\begin{equation}\label{conormal_tonagentos}
z^{\p\Sigma}(\pnt) 
\end{equation}
to denote the $90^{\circ}$-rotation of $z$ pointing out of the curve.
\end{definition}

Sometimes we consider sets $\Sigma$ for which there exists $R_0>0$ such that $\Sigma\cap \cl{D_R}$ is a Lipschitz (resp. $C^{k+\alpha}$) curve with boundary and $\Sigma\setminus D_R$ is a straight half-line for any disc $D_R$ with $R>R_0$. With a slight abuse of notation, such sets $\Sigma$ will be still called a Lipschitz (resp. $C^{k+\alpha}$) curve with boundary. In this case $\Sigma$ has only one boundary point.

\subsection{Tangential divergence of a vector field}

The \emph{tangential divergence} of a vector field $g\in C^1( \R^2;  \R^2)$ 
over an embedded Lipschitz curve $\Sigma$ is defined as
$$
\div_\Sigma g(\pnt) = \nabla g(\pnt)\tau_\Sigma(\pnt)\cdot \tau_\Sigma(\pnt),\quad \text{$\cH^1$-a.e. $\pnt\in\Sigma$}.
$$
When the curve is $C^1$, this equality holds at every point of $\Sigma.$

\begin{remark}
When we will define Cahn-Hoffman vector fields, we consider the tangential divergence of a Lipschitz vector field $N_\phi$ defined only along a Lipschitz curve $\Sigma.$ In this case, we extend $N_\phi$ to a tubular neighborhood of $\Sigma$  constant along the vector $N_\phi(\pnt)$ for $\pnt\in\Sigma,$ i.e., if $z\in  \R^2$ and $z = \pnt + \lambda N_\phi(\pnt)$ for a unique $\pnt\in \Sigma$ and sufficiently small $|\lambda|,$ then we set $N_\phi(z): = N_\phi(\pnt).$ 
\end{remark}

The tangential divergence can also be introduced using parametrizations. More precisely, if $\sigma\in \Lip([0,1]; \R^2)$ is a regular parametrization of $\Sigma$ and $g:\Sigma\to  \R^2$ is a Lipschitz vector field along $\Sigma$, i.e., $g\circ \sigma\in \Lip([0,1]; \R^2)$, then 
\begin{equation}
\div_\Sigma\, g(\pnt) = \frac{[g\circ \sigma]'(x)\cdot \sigma'(x)}{|\sigma'(x)|^2},\qquad \pnt =\sigma(x)
\label{lalala_season3939}
\end{equation}
at points of differentiability.
One can readily check that the tangential divergence is independent of the parametrization.

\subsection{Lipschitz/smooth  partitions and associated networks}

Given a finite family $\{E_i\}$ of open subsets (the phases) of $ \R^2$ with Lipschitz boundary such that $\bigcup_i \cl{E_i} =  \R^2$ and $E_i\cap E_j = \emptyset$ for $i\ne j,$ we say $\{E_i\}$ is a (finite) \emph{Lipschitz} (resp. \emph{$C^{k+\alpha}$})  partition of $ \R^2$ if $\Sigma_{ij}:=\p E_i\cap \p E_j$ (if not empty or discrete, see Figure \ref{fig:lip_and_nonlip}(a)) is a finite union of Lipschitz (resp. $C^{k+\alpha}$) curves with boundary. Each $E_i$ is called a \emph{phase} and each $\Sigma_{ij}$ (if not discrete) is called an \emph{interface}. 
Given a natural number $m\ge3, $ we call a point $q$ an \emph{$m$-tuple junction} of $\{E_i\}$ is there exist (exactly) $m$-phases containing $q$ in their boundary.

\begin{figure}[]
\begin{center}
\includegraphics[width=0.8\textwidth]{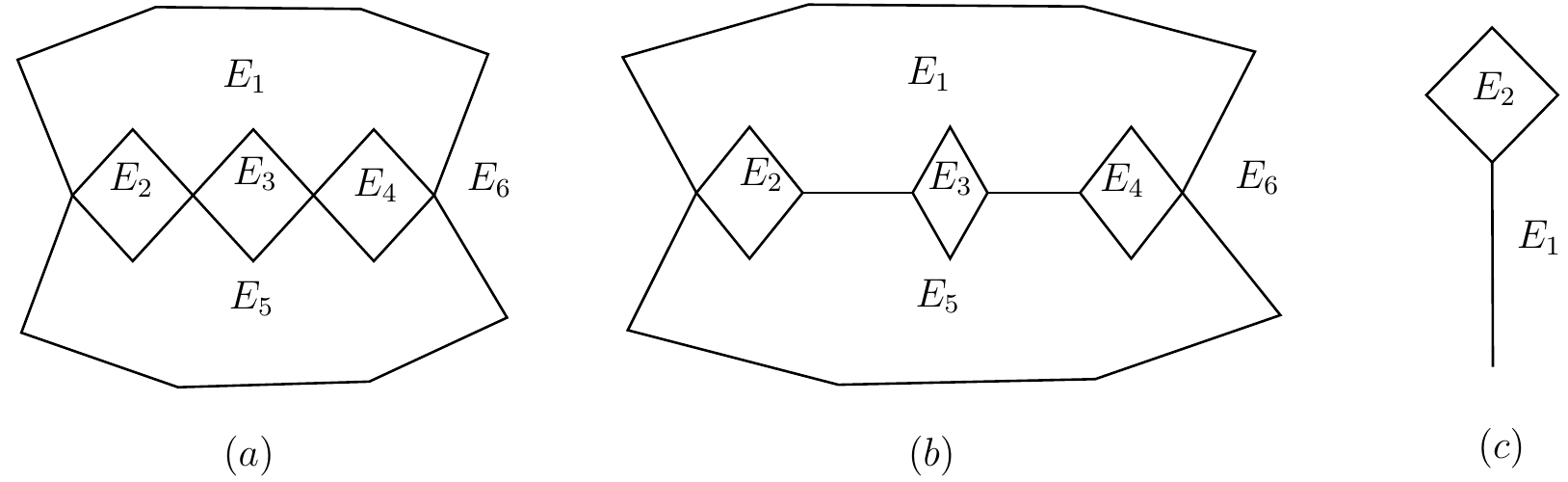}
\caption{\small Lipschitz (Figures (a) and (b)) and non Lipschitz (Figure (c)) partitions. Note that in (a) $\Sigma_{15}=\p E_1\cap \p E_5$ consists of four points and $\Sigma_{36}=\p E_3\cap \p E_6$ is empty. In (b) $\Sigma_{15}$ is a disjoint union of two Lipschitz curves and two points and in (c) the unbounded phase $E_1$ cannot be written (locally) as a subgraph of a Lipschitz function.}
\label{fig:lip_and_nonlip}
\end{center}
\end{figure}

In what follows we consider only Lipschitz partitions of $ \R^2$ for which:
\begin{itemize}
 \item $\Sigma:=\bigcup_{i,j}\Sigma_{ij}$ (which we call a \emph{network}) is connected,
 
 \item either only one phase is unbounded or $\Sigma$ consists of finitely many half-lines out of some discs (this case will be considered only in the crystalline case).
\end{itemize}
In particular, we do not prescribe Dirichlet boundary conditions for networks; moreover, in the evolutions we will admit only triple junctions. For notational simplicity, the curves of $\p E_i\cap \p E_j$ in the network $\Sigma$ will be often denoted by $\Sigma_k,$ using one index only. 

Note that the Lipschitzianity of $E_i$ imply that our Lipschitz partitions do not include Brakke's spoon\footnote{A union of an embedded closed curve and a half-line starting from a point of the curve.} 
type networks (in this case the unbounded phase is not a Lipschitz set, see Figure \ref{fig:lip_and_nonlip} (c)).

\subsection{Anisotropic energy of a network}

Let $\{E_i\}_{i=1}^n$ be a Lipschitz partition of $ \R^2.$ 
Let $\Phi:=\{\phi_{ij}\}$ be a collection of anisotropies in $ \R^2$ such that each $\phi_{ij}$ is associated to $\Sigma_{ij}.$ Notice that $\phi_{ij}=\phi_{ji}$ and $\Sigma_{ij} = \Sigma_{ji}.$ The  \emph{$\Phi$-length} of $\Sigma:=\bigcup_{ij}\Sigma_{ij}$ in an open set $U\subseteq \R^2$ is defined as 
\begin{equation}\label{network_length}
\ell_\Phi(\Sigma; U): = \sum\limits_{1\le i < j\le n} \,\int_{U\cap \Sigma_{ij}} \phi_{ij}^o(\nu_{\Sigma_{ij}})d\cH^1. 
\end{equation}
By assumption, each $\Sigma_{ij}$ is either empty, or a finite set of points or a Lipschitz curve with boundary and therefore, $\ell(\Sigma;U)<+\infty$ for any bounded open set $U.$ We also set 
$$
\ell_\Phi(\Sigma) : = \ell_\Phi(\Sigma;  \R^2)
$$
provided that $\Sigma$ is bounded.

\begin{remark}
We assume
\begin{equation}
\phi_{ij}^o + \phi_{jk}^o\ge \phi_{ik}^o,
\label{triangle_ineqeqeq}
\end{equation}
which is important since the invalidity of \eqref{triangle_ineqeqeq} yields local instabilities. Indeed, in this situation, a creation of a very thin new phase along the interfaces with large surface tensions would decrease the length. 
In the proof of Theorem \ref{teo:existence_special_mcf} we do not use \eqref{triangle_ineqeqeq} because of our assumptions on the shape of admissible networks (we do not allow creation of new phases).
\end{remark}

\section{Evolution of networks with elliptic anisotropies}

In this section we assume that all anisotropies are elliptic.

\subsection{First variation of length}

The following result 
was established in \cite[Theorem 3.4]{BNR:2003}.

\begin{proposition}\label{prop:first_varaition_curve}
Let $\Sigma\subset \R^2$ be an embedded  $C^2$ curve with boundary $\p\Sigma = \{p,q\}$ and let $\sigma:[0,1]\to  \R^2$ be a 
regular parametrization of $\Sigma$ 
with $\sigma(0)=p_0$ and $\sigma(1)=p_1$. Let $\beta\in C^2([0,1]; \R^2)$ and for 
sufficiently small $|s|$ with $s\in \R$, let $\sigma + s\beta$ parametrize the curve $\Sigma_s.$ Then 
\begin{equation*}
\frac{d \ell_\phi(\Sigma_s)}{ds}\Big|_{s=0} = 
\int_\Sigma  
\beta\cdot \nu_\Sigma\,\,\div_\Sigma N_\phi \,d\cH^1 + \beta(1)\cdot [N_\phi(p_1)]^{\p\Sigma} +  \beta(0)\cdot [N_\phi(p_0)]^{\p\Sigma},  
\end{equation*} 
where
$$
N_\phi(\pnt): = \nabla \phi^o(\nu_\Sigma(\pnt)),\quad \pnt \in\Sigma,
$$ 
and $N_\phi^{\p \Sigma}$ is defined in  \eqref{conormal_tonagentos}.
\end{proposition}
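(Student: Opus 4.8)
The plan is to pull everything back to the parameter interval $[0,1]$ and exploit the one-homogeneity of $\phi^o$. Writing $\Sigma_s = (\sigma + s\beta)([0,1])$, the unit normal along $\Sigma_s$ is $\nu_{\Sigma_s} = (\sigma' + s\beta')^\perp / |\sigma' + s\beta'|$, and since $|(\sigma'+s\beta')^\perp| = |\sigma'+s\beta'|$, one-homogeneity collapses the weight and the normalization into a single clean expression,
\begin{equation*}
\ell_\phi(\Sigma_s) = \int_0^1 \phi^o(\nu_{\Sigma_s}) |\sigma' + s\beta'| \, dx = \int_0^1 \phi^o\big((\sigma'(x) + s\beta'(x))^\perp\big)\, dx.
\end{equation*}
Because $|\sigma'|$ is bounded away from $0$ on the compact interval $[0,1]$, for $|s|$ small the argument $(\sigma'+s\beta')^\perp$ stays in $\R^2\setminus\{0\}$, where $\phi^o$ is $C^1$ (ellipticity of $\phi$ forces $\phi^o$ to be $C^2$ off the origin). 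Hence differentiation under the integral sign is legitimate, and using the zero-homogeneity of $\nabla\phi^o$ together with $\sigma'^\perp = |\sigma'|\nu_\Sigma$ I obtain
\begin{equation*}
\frac{d\ell_\phi(\Sigma_s)}{ds}\Big|_{s=0} = \int_0^1 \nabla\phi^o(\sigma'^\perp)\cdot(\beta')^\perp\, dx = \int_0^1 N_\phi(\sigma(x))\cdot \beta'(x)^\perp\, dx.
\end{equation*}

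Writing $M(x):=N_\phi(\sigma(x))$ and noting $(\beta')^\perp = (\beta^\perp)'$, an integration by parts in $x$ produces a bulk term and a boundary term,
\begin{equation*}
\frac{d\ell_\phi(\Sigma_s)}{ds}\Big|_{s=0} = \big[M\cdot\beta^\perp\big]_0^1 - \int_0^1 M'\cdot\beta^\perp\, dx.
\end{equation*}
Using the identity $u\cdot v^\perp = -u^\perp\cdot v$ I rewrite $-M'\cdot\beta^\perp = (M')^\perp\cdot\beta$, and then decompose $M'$ in the moving frame $\{\tau_\Sigma,\nu_\Sigma\}$; since $\tau_\Sigma^\perp = \nu_\Sigma$ and $\nu_\Sigma^\perp = -\tau_\Sigma$, this yields
\begin{equation*}
(M')^\perp\cdot\beta = (M'\cdot\tau_\Sigma)(\beta\cdot\nu_\Sigma) - (M'\cdot\nu_\Sigma)(\beta\cdot\tau_\Sigma).
\end{equation*}

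The step I expect to be the main obstacle is disposing of the second, tangential term. Here the structure of the Cahn--Hoffman field is essential: from $M = \nabla\phi^o(\nu_\Sigma)$ one gets $M' = \nabla^2\phi^o(\nu_\Sigma)\,\nu_\Sigma'$, while $\nu_\Sigma'$ is parallel to $\tau_\Sigma$ because $|\nu_\Sigma|\equiv 1$. Differentiating the zero-homogeneity relation $\nabla\phi^o(\lambda\xi)=\nabla\phi^o(\xi)$ in $\lambda$ gives $\nabla^2\phi^o(\xi)\xi = 0$, so by symmetry of $\nabla^2\phi^o$ I find $M'\cdot\nu_\Sigma = \nabla^2\phi^o(\nu_\Sigma)\nu_\Sigma'\cdot\nu_\Sigma = \nu_\Sigma'\cdot\nabla^2\phi^o(\nu_\Sigma)\nu_\Sigma = 0$, and the tangential term vanishes identically. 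For the surviving term I invoke the parametric formula \eqref{lalala_season3939} with $g=N_\phi$, which gives $M'\cdot\tau_\Sigma = |\sigma'|\,\div_\Sigma N_\phi$; recalling $d\cH^1 = |\sigma'|\,dx$, this converts the bulk integral into $\int_\Sigma (\beta\cdot\nu_\Sigma)\,\div_\Sigma N_\phi\, d\cH^1$, matching the claimed interior contribution.

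Finally, for the boundary contribution $[M\cdot\beta^\perp]_0^1 = N_\phi(p_1)\cdot\beta(1)^\perp - N_\phi(p_0)\cdot\beta(0)^\perp$, I again use $u\cdot v^\perp = -u^\perp\cdot v$ to write $N_\phi(p_1)\cdot\beta(1)^\perp = -[N_\phi(p_1)]^\perp\cdot\beta(1)$ and likewise at $p_0$. Since Euler's identity gives $N_\phi\cdot\nu_\Sigma = \phi^o(\nu_\Sigma)>0$, the field $N_\phi$ is transversal to $\Sigma$, and a quick check of the tangential component of $\pm N_\phi^\perp$ shows that the rotation pointing out of the curve is $-N_\phi^\perp$ at the terminal point $p_1$ and $+N_\phi^\perp$ at the initial point $p_0$; by the definition \eqref{conormal_tonagentos} these are exactly $[N_\phi(p_1)]^{\p\Sigma}$ and $[N_\phi(p_0)]^{\p\Sigma}$. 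Collecting the two endpoint terms yields $\beta(1)\cdot[N_\phi(p_1)]^{\p\Sigma} + \beta(0)\cdot[N_\phi(p_0)]^{\p\Sigma}$, which completes the identification and the proof.
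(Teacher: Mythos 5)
Your proof is correct. Note, however, that the paper does not actually prove this proposition: it is quoted as \cite[Theorem 3.4]{BNR:2003}, so there is no internal proof to match; what you have produced is a self-contained alternative to that citation. Your route is the direct parametric one: pull the energy back to $[0,1]$, use one-homogeneity to write $\ell_\phi(\Sigma_s)=\int_0^1\phi^o\big((\sigma'+s\beta')^\perp\big)\,dx$, differentiate under the integral (legitimate since $|\sigma'|>0$ and $\phi^o\in C^2(\R^2\setminus\{0\})$ by ellipticity), integrate by parts, and then split $(M')^\perp\cdot\beta$ in the frame $\{\tau_\Sigma,\nu_\Sigma\}$. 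The two points that could have gone wrong are handled properly: the tangential term is killed by $\nabla^2\phi^o(\xi)\xi=0$ (zero-homogeneity of $\nabla\phi^o$) together with $\nu_\Sigma'\perp\nu_\Sigma$ and the symmetry of the Hessian, and the endpoint bookkeeping correctly identifies $-N_\phi^\perp$ (resp.\ $+N_\phi^\perp$) as the outward-pointing rotation at $p_1=\sigma(1)$ (resp.\ $p_0=\sigma(0)$), using $N_\phi\cdot\nu_\Sigma=\phi^o(\nu_\Sigma)>0$ so that exactly one of the two rotations points out of the curve; this is consistent with the paper's identification of \eqref{herring_condosh} with \eqref{herring_condosh_normals}. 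Your identification of $M'\cdot\tau_\Sigma=|\sigma'|\,\div_\Sigma N_\phi$ via \eqref{lalala_season3939} is also exactly how the paper interprets the tangential divergence. What the citation to \cite{BNR:2003} buys that your argument does not is generality: there the first variation is derived for partitions and for nonsmooth (e.g.\ crystalline) anisotropies via Cahn--Hoffman vector fields, which is the form the paper needs again in its crystalline section; your computation is cleaner but intrinsically tied to the smooth elliptic case, where $N_\phi=\nabla\phi^o(\nu_\Sigma)$ is single-valued and differentiable. One small point you leave implicit: equating $\int_{\Sigma_s}\phi^o(\nu_{\Sigma_s})\,d\cH^1$ with the parametric integral uses injectivity of $\sigma+s\beta$ for small $|s|$; this follows from compactness and the immersion property of $\sigma$ (and is in any case granted by the statement, which declares $\sigma+s\beta$ a parametrization of $\Sigma_s$).
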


The number 
$$
\kappa_\Sigma^\phi(\pnt):= \div_\Sigma N_\phi(\pnt),\qquad \pnt\in\Sigma,
$$
in the integral is called the \emph{$\phi$-curvature}\footnote{In higher dimensions the anisotropic tangential divergence of a vector field $N:\Sigma\to \R^n$ over a Lipschitz manifold $\Sigma$ is defined as
$$
\div_{\Sigma,\phi}\, g = {\rm Tr}\Big(({\rm Id} - n_\phi\otimes \nu_\phi) \nabla \hat g\Big),
$$
where $\nu_\phi = \nu_\Sigma/\phi^o(\nu_\Sigma)$ and $n_\phi\in \p \phi^o(\nu_\phi),$ and $\hat g$ is the constant extension of $g$ along $n_\phi$ \cite[Definition 4.1]{BNP:2001}. By \cite[Lemma 4.4]{BNP:2001} $\div_{\Sigma,\phi} N_\phi$ coincides with $\div_\Sigma N_\phi.$
} of the curve $\Sigma$ at $\pnt$ and the vector field $N_\phi$ is sometimes called the \emph{Cahn-Hoffman vector field} on $\Sigma$. Moreover, the vector  $\kappa_\Sigma^\phi N_\phi$ is called the $\phi$-vector curvature. 
When no confusion arises, we write $\kappa^\phi$ in place of $\kappa_\Sigma^\phi$.

From Proposition \ref{prop:first_varaition_curve} we get 

\begin{corollary}
Let $\Sigma:=\{\Sigma_i\}_{i=1}^n$ be a network consisting of embedded $C^2$-curves with boundary and $\Phi=\{\phi_i\}_{i=1}^n$ be elliptic anisotropies such that $\phi_i$ is associated to $\Sigma_i.$ Let $Q$ be a $m$-tuple junction, say the intersection point of curves $\Sigma_1,\ldots,\Sigma_m.$ Let $\sigma_i:[0,1]\to \R^2$ be a regular $C^2$-parametrization of $\Sigma_i$ such that $\sigma_i(1) = Q$ and let $\beta_1,\ldots,\beta_m\in C^2([0,1]; \R^2)$ be such that $\beta_i(0)=0$ and $\beta_i(1)=\beta_j(1)=Q$ and for $s\in \R$ with 
sufficiently small $|s|$ let $\Sigma_i^s$ be the curve parametrized by $\sigma_i+s\beta_i$ and set $\Sigma_s:=\bigcup_{i\ge1} \Sigma_i^s.$ 
Then 
\begin{equation}\label{first_var_networkos}
\frac{d\ell_\Phi(\Sigma_s)}{ds}\Big|_{s=0} = \sum\limits_{i=1}^m \int_{\Sigma_i}  \beta_i\cdot \nu_{\Sigma_i} \,\kappa^{\phi_i}\, d\cH^1 + Q\cdot \sum\limits_{i=1}^m [N_{\phi_i}(Q)]^{\p\Sigma_i}. 
\end{equation}
\end{corollary}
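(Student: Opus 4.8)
The plan is to derive the corollary directly from Proposition~\ref{prop:first_varaition_curve} by summing the single-curve first-variation formula over the $m$ curves meeting at the junction $Q$, and then carefully bookkeeping the boundary contributions. First I would apply Proposition~\ref{prop:first_varaition_curve} to each curve $\Sigma_i$ separately, using the parametrization $\sigma_i$ and the variation field $\beta_i$. Since each $\Sigma_i$ has two endpoints, the proposition produces an interior integral $\int_{\Sigma_i}\beta_i\cdot\nu_{\Sigma_i}\,\kappa^{\phi_i}\,d\cH^1$ together with two boundary terms, one at $\sigma_i(0)$ and one at $\sigma_i(1)=Q$. Because $\ell_\Phi$ is by definition the sum $\sum_i \ell_{\phi_i}(\Sigma_i)$ and the variation of $\Sigma_s$ splits as a sum of the variations of the individual $\Sigma_i^s$, differentiating at $s=0$ and linearity of the derivative immediately give $\frac{d\ell_\Phi(\Sigma_s)}{ds}\big|_{s=0}=\sum_{i=1}^m \frac{d\ell_{\phi_i}(\Sigma_i^s)}{ds}\big|_{s=0}$, so the whole computation reduces to assembling the $m$ separate formulas.

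Next I would dispose of the boundary terms at the free endpoints. At $\sigma_i(0)$ the hypothesis $\beta_i(0)=0$ forces the corresponding boundary contribution $\beta_i(0)\cdot[N_{\phi_i}(\sigma_i(0))]^{\p\Sigma_i}$ to vanish, so no constraint on the anisotropy or conormal at the outer endpoints is needed. The only surviving boundary terms are those at the junction, namely $\beta_i(1)\cdot[N_{\phi_i}(Q)]^{\p\Sigma_i}$ for $i=1,\dots,m$. Here I would invoke the compatibility condition $\beta_i(1)=\beta_j(1)=Q$: since all the variation vectors agree at the junction, I may factor the common value $Q$ out of the sum of conormal contributions, obtaining $\sum_{i=1}^m \beta_i(1)\cdot[N_{\phi_i}(Q)]^{\p\Sigma_i}=Q\cdot\sum_{i=1}^m[N_{\phi_i}(Q)]^{\p\Sigma_i}$, which is exactly the boundary term appearing in \eqref{first_var_networkos}. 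Recalling that $\kappa^{\phi_i}=\div_{\Sigma_i}N_{\phi_i}$ by definition then turns each interior integrand into the stated form, completing the derivation.

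The main conceptual point to watch, rather than a genuine obstacle, is the orientation and sign convention for the outward conormal $[N_{\phi_i}(Q)]^{\p\Sigma_i}$ at the shared junction. Each curve $\Sigma_i$ is parametrized so that $\sigma_i(1)=Q$, meaning $Q$ is its terminal endpoint, and Proposition~\ref{prop:first_varaition_curve} attaches to the endpoint $p_1=\sigma_i(1)$ the term $\beta(1)\cdot[N_\phi(p_1)]^{\p\Sigma}$ with $[\,\cdot\,]^{\p\Sigma}$ the rotation pointing out of the curve as in \eqref{conormal_tonagentos}. I would verify that this outward-pointing convention is applied consistently for every $\Sigma_i$ at $Q$, so that the $m$ conormal vectors are all computed with the same geometric rule; this is what makes the factored sum $\sum_{i=1}^m[N_{\phi_i}(Q)]^{\p\Sigma_i}$ meaningful as the quantity whose vanishing encodes the Herring (balance-of-forces) condition at the junction. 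With the endpoint labeling fixed and the conormal sign convention uniform, the corollary follows at once from term-by-term summation.
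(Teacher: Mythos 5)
Your proposal is correct and is essentially the paper's own argument: the corollary is stated as an immediate consequence of Proposition \ref{prop:first_varaition_curve}, obtained exactly as you do by summing the single-curve formula over the $m$ curves, killing the terms at the free endpoints via $\beta_i(0)=0$, and factoring the common value $\beta_i(1)=Q$ out of the junction terms. Your additional check that the outward-conormal convention \eqref{conormal_tonagentos} is applied uniformly at $Q$ is the right point to verify, and it is consistent with the paper's conventions.
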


The balance condition 
\begin{equation}\label{herring_condosh}
\sum\limits_{i=1}^m [N_{\phi_i}(Q)]^{\p\Sigma_i}  = 0
\end{equation}
is sometimes called  \emph{Herring condition} \cite{BCN:2006,GN:2000,Herring:1999,TCH:1992}. By the definition \eqref{conormal_tonagentos} of $z^{\p\Sigma}$ this equality is rewritten also as 
\begin{equation}\label{herring_condosh_normals}
\sum\limits_{i=1}^m N_{\phi_i}(Q)  = 0.
\end{equation}

Condition \eqref{herring_condosh} requires some 
compatibility between anisotropies $\phi_i:$ 

\begin{example}
Let $n = 3,$ $\phi_1 = \phi_2=|\cdot|$ and $\phi_3=1/3|\cdot|,$ and let $Q$ be a triple junction of $C^2$-curves $\Sigma_1,$ $\Sigma_2$ and $\Sigma_3.$ Then
$$
N_1(Q)= \nu_{\Sigma_1}(Q),\quad N_2(Q) = \nu_{\Sigma_2}(Q)\quad\text{and}\quad N_3(Q) = 3\nu_{\Sigma_3}(Q).
$$
Thus, $|N_1| = |N_2|=1$ and $|N_3|=3,$ which implies the sum 
$N_1(Q) + N_2(Q) + N_3(Q)$ can not be zero. In particular, condition \eqref{herring_condosh_normals} is not related to the triangle inequality \eqref{triangle_ineqeqeq}.
\end{example}

\subsection{Anisotropic  curvature of a curve}

Let $\Sigma$ be an embedded  $C^2$-curve regularly parametrized by $\sigma\in C^2([0,1]; \R^2).$ Let us express $\kappa^\phi$ by means of $\sigma.$ Note that
$$
N_\phi(\pnt) = \nabla \phi^o\Big(\tfrac{\sigma'(x)^\perp}{|\sigma' (x)|}\Big),\quad \pnt=\sigma(x)\in\Sigma, 
$$
and hence, by \eqref{lalala_season3939} at $\pnt = \sigma(x)$ we have
\begin{equation}
\kappa^\phi(\pnt) = \left(\Big[\nabla^2\phi^o\Big(\tfrac{\sigma' (\pnt)^\perp}{|\sigma' (\pnt)|}\Big)\tfrac{\sigma' (\pnt)}{|\sigma' (\pnt)|}\Big]\cdot \tfrac{\sigma' (\pnt)}{|\sigma' (\pnt)|}\right)\,\tfrac{\sigma''(\pnt)\cdot \sigma' (\pnt)^\perp}{|\sigma' (\pnt)|^3}.
\label{sakdakda}
\end{equation}
Now recalling the definition of $\tau_\Sigma$ and $\nu_\Sigma$ as well as the definition of the Euclidean curvature $\kappa$ of a curve, the last equality is rewritten on $\Sigma$ as 
$$
\kappa^\phi = \Big(\nabla^2\phi^o(\nu_\Sigma)\tau_\Sigma\cdot \tau_\Sigma\Big)\,\kappa.
$$
This observation will be used frequently.

\subsection{Existence of a smooth flow}

In this section we only consider bounded networks associated to an at least  $C^2$-partition of $ \R^2$ and all anisotropies are at least $C^3.$ 

\begin{definition} 
Given a network $\Sigma^0:=\{\Sigma_j^0\}_{j=1}^n$  and associated elliptic anisotropies\footnote{For simplicity, we write $\phi_k$ and $\Sigma_k$ in places of $\phi_{ij}$ and $\Sigma_{ij}.$} $\Phi:=\{\phi_j\}_{j=1}^n,$  we say that a family $\Sigma(t): = \{\Sigma_j(t)\},$ $t\in[0,T)$ of networks is a \emph{$\Phi$-curvature flow} starting from $\Sigma^0$ if $\Sigma(0) = \Sigma^0$ and there exists an $n$-tuple $u:= \{u^j\}_{j=1}^n \in [C^{1,2}((0,T)\times [0,1]; \R^2)]^n\cap [C([0,T)\times [0,1]; \R^2)]^n$ such that
	
\begin{itemize}
\item[(a)] each $u^j(t),$ $t\ge0,$ is a regular parametrization of $\Sigma^j(t);$

\item[(b)]  
\begin{equation}\label{curves_clow_mcs}
u_t^j\cdot \nu^j = \phi_j^o(\nu^j)\big(\nabla^2\phi_j^o(\nu^j)\tau^j\cdot \tau^j\big)\kappa \quad \text{on $(0,T)\times (0,1)$}, 
\end{equation} 
where $\nu^j= \nu_{\Sigma^j}$ and $\tau^j= \tau_{\Sigma^j};$

\item[(c)] each $\Sigma(t)$ contains only triple junctions and if the curves $\Sigma^{j_1}(t),$ $\Sigma^{j_2}(t)$ and $\Sigma^{j_3}(t)$ intersect at a triple    junction $q(t),$ then
$$
\sum\limits_{i=1}^3 \nabla\phi_{j_i}^o(\nu^{j_i}(q(t))) = 0\quad \text{for all  $t\in (0,T).$}
$$
\end{itemize}
Any such flow $\Sigma(\cdot)$ is called a \emph{
smooth geometric anisotropic curvature flow of the network} $\Sigma^0$. 
\end{definition}

Condition (b) says that each curve in the network moves with normal velocity equal to its anisotropic curvature, whereas condition (c) expresses the Herring condition at  triple junctions. 
Note that we are not assuming a priori that $\Sigma^0$ satisfies Herring condition \eqref{herring_condosh_normals}. Even though in what follows we consider only initial networks satisfying \eqref{herring_condosh_normals}, we would like to mention that  there are results in the Euclidean case (see e.g. \cite{LMPS:2021}) that prove short time existence from an initial network not satisfying \eqref{herring_condosh_normals}; this instantaneous regularization is an interesting result.

Equation \eqref{curves_clow_mcs} expresses only the normal component of the velocity of $\Sigma(t);$ the presence of triple junctions forces (see e.g. \cite{KNP:2021,MNPS:2016,MNT:2004}) $\Sigma(t)$ to have also a tangential velocity. Following \cite[Definition 2.4]{KNP:2021} and choosing the tangential component of the velocity as 
$$
\lambda:=u_t^j\cdot \tau^j = \phi_j^o(\nu^j)(\nabla^2 \phi_j^o(\nu)\tau^j\cdot \tau^j) \tfrac{u_{xx}^j}{|u_x^j|^2} \cdot \tau^j
$$
in \eqref{curves_clow_mcs},  we can introduce:

\begin{definition}[\textbf{Special geometric flow}]
A \emph{special geometric anisotropic curvature flow} is defined by the equation 
\begin{equation}\label{special_flows}
u_t^j = \phi_j^o(\nu^j)(\nabla^2 \phi_j^o(\nu)\tau^j\cdot \tau^j) \frac{u_{xx}^j}{|u_x^j|^2}.
\end{equation}
\end{definition}

\begin{remark}[\textbf{Reduction to a special flow}]\label{rem:cjhalpal}
Repeating the arguments of \cite[Lemma 4.1]{KNP:2021} (see also \cite{MNPS:2016}) we can prove that using (orientation preserving) diffeomorphisms/reparametrizations every smooth geometric flow can be reduced to a special geometric flow. This observation implies that given $\Sigma^0$ (satisfying condition \eqref{herring_condosh_normals}), to prove the short-time existence of a smooth geometric flow we only need to establish short-time existence of a special geometric flow starting from $\Sigma^0.$ 
\end{remark}

Let us consider a special geometric flow $u(t)$ and the evolution of some triple junction 
$$
q(t):=u^{j_1}(t,y_1) =u^{j_2}(t,y_2)=u^{j_3}(t,y_3)
$$ 
for some $(y_1,y_2,y_3)\in\{0,1\}^3$ and for all $t\in(0,1)$. Since  all $u^j\in C^{1,2}((0,T)\times [0,1]; \R^2),$  we have 
\begin{equation}
q_t(t)= u_t^{j_1}(t,y_1) =u_t^{j_2}(t,y_2)=u_t^{j_3}(t,y_3).
\label{dahu3fg}
\end{equation}
Thus, inserting \eqref{sakdakda} in \eqref{special_flows} at $q(t)$ we get
\begin{align}
\phi_i^o(\nu^i)\big(\nabla^2\phi_i^o(\nu^i)\tau^i\cdot \tau^i\big)\,\frac{u_{xx}^i(t,y)}{|u_x^i(t,y)|^3} 
= 
\phi_j^o(\nu^j)\big(\nabla^2\phi_j^o(\nu^j)\tau^j\cdot \tau^j\big)\,\frac{u_{xx}^j(t,y)}{|u_x^j(t,y)|^3} 
\label{no_tangency}
\end{align}
for all $i,j\in\{j_1,j_2,j_3\}.$ If $\Sigma(\cdot)$ is smooth up to $t=0,$ the second order compatibility condition \eqref{no_tangency} should be satisfied by the initial network $\Sigma^0.$
Later in this section we show that if $\Sigma^0$ satisfies conditions
\eqref{herring_condosh_normals} 
and \eqref{no_tangency},  then there exist $T>0$ and a special geometric anisotropic curvature flow $\{\Sigma(t)\}_{t\in[0,T]}$ starting from $\Sigma^0.$

\subsection{Role of the Herring condition}

Let $\Sigma(t)$ be a smooth geometric flow starting from a bounded network $\Sigma^0.$ We claim that condition \eqref{herring_condosh_normals} 
implies that the anisotropic length $\ell_\Phi(\Sigma(t))$ is non-increasing in $t>0.$ Indeed, without loss of generality, we may assume that $\Sigma(t)$ is special (see Remark \ref{rem:cjhalpal}).  Hence, using the definition of $\ell_\Phi,$ integration by parts, \eqref{dahu3fg} and \eqref{curves_clow_mcs} we get 
\begin{align*}
\frac{d}{d t}\,\ell_\Phi(\Sigma(t)) = - \sum\limits_{j=1}^n 
\int_0^1 \frac{\phi_j^o(u_x^\perp)}{|u_x^j|^3}\,\Big|\nabla^2 \phi_j^o(u_x^\perp)u_{xx}^j\Big|^2\,d x 
+ 
\sum\limits_{q(t)\in J(t)} q_t(t)\cdot \sum\limits_{i=1}^3 \nabla\phi_{j_i}^o(\nu^{j_i}(q(t))),
\end{align*}
where $J(t)$ is the set of all triple junctions, 
$$
q(t): = u^{j_1}(t,y_1) =u^{j_2}(t,y_2)=u^{j_3}(t,y_3)
$$
($j_i$ and $y_i$ are different for different $q$).
Hence, condition \eqref{herring_condosh_normals} 
implies that 
$$
\frac{d}{d t}\,\ell_\Phi(\Sigma(t))  \le 0,
$$
i.e. the map $t\mapsto \ell_\Phi(\Sigma(t))$ is non-increasing.

\subsection{Existence and uniqueness of special flows}

In this section we prove the short-time existence and uniqueness of a smooth anisotropic curvature flow  starting from a given network $\Sigma^0$ satisfying 
conditions \eqref{herring_condosh_normals} and \eqref{no_tangency} at triple junctions. For simplicity, we consider only \emph{theta-shaped} networks, i.e. bounded $C^2$-networks consisting of only three embedded curves meeting at two triple junctions.

The main result of the section reads as follows.

\begin{theorem}[Local existence and uniqueness] 
Let $\Sigma^0$ be a $C^{2+\alpha}$ theta-shaped network satisfying at both triple junctions
\begin{equation*} 
\sum\limits_{i=1}^3 \nabla\phi_i^o(\nu_{\Sigma^0}^{i}) = 0 
\end{equation*}
and admitting a parametrization satisfying the second order compatibility condition  \eqref{no_tangency}. Then there exists a unique smooth geometric flow starting from $\Sigma^0.$
\end{theorem}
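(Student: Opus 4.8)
The plan is to exploit Remark \ref{rem:cjhalpal}: since every smooth geometric flow reduces, via an orientation-preserving reparametrization, to a special geometric flow, it suffices to construct a unique special flow \eqref{special_flows} issuing from a fixed regular $C^{2+\alpha}$ parametrization $u_0=(u_0^1,u_0^2,u_0^3)$ of $\Sigma^0$. For a theta-shaped network we may parametrize all three curves on $[0,1]$ so that $x=0$ is one triple junction and $x=1$ the other. The unknown $u=(u^1,u^2,u^3)$ then solves the interior system \eqref{special_flows} on $(0,T)\times(0,1)$ subject, at each of $x=0$ and $x=1$, to the concurrency conditions $u^1=u^2=u^3$ (four scalar equations per junction) and the Herring condition \eqref{herring_condosh_normals}, $\sum_{i=1}^3\nabla\phi_i^o(\nu^i)=0$ (two scalar equations per junction). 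This accounts for exactly the twelve boundary conditions required by a second-order system for six scalar unknowns on an interval.

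Writing $a_j(u_x^j):=\phi_j^o(\nu^j)\big(\nabla^2\phi_j^o(\nu^j)\tau^j\cdot\tau^j\big)/|u_x^j|^2$, equation \eqref{special_flows} reads $u_t^j=a_j(u_x^j)\,u_{xx}^j$, a quasilinear system whose principal coefficient is scalar on each curve. Ellipticity of $\phi_j^o$ gives $\nabla^2\phi_j^o(\nu^j)\tau^j\cdot\tau^j\ge\bar c>0$ and $\phi_j^o(\nu^j)\ge c>0$, so, as long as $|u_x^j|$ stays bounded and bounded away from zero near $u_0$, the coefficient $a_j$ is bounded below by a positive constant and the interior problem is uniformly parabolic; the $C^3$ regularity of the anisotropies makes $a_j$ and the Herring operator $C^1$ in their arguments, which is what the Hölder-space scheme below requires. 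The strategy is the classical one: linearize about $u_0$, solve the resulting linear initial–boundary value problem in the parabolic Hölder spaces $C^{2+\alpha,1+\alpha/2}$, and recover the quasilinear solution by a contraction argument.

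First I would treat the linearized problem, obtained by freezing $a_j$ at $a_j(u_{0,x}^j)$ and linearizing $u\mapsto\sum_i\nabla\phi_i^o(\nu^i)$ about $\nu_{\Sigma^0}^i$; this yields a linear uniformly parabolic system with Dirichlet-type concurrency conditions (zeroth order) and Neumann-type linearized Herring conditions (first order in $u_x$) at the two junctions. The decisive point, and the main obstacle, is to verify the Lopatinskii–Shapiro (complementing) condition for this boundary operator. Concretely, one freezes all coefficients at a junction, applies the Laplace transform in $t$, and is left, for each frequency $\lambda$ with $\Re\lambda\ge0$ and $\lambda\ne0$, with the three decaying half-line solutions $\hat u^j(x)=c_j e^{-\sqrt{\lambda/a_j}\,x}$; the complementing condition is the requirement that the linear system imposed on the constants $c_j$ by the homogeneous concurrency and linearized Herring conditions be nonsingular. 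The concurrency relations couple the three branches, while the linearized Herring condition, whose leading part is governed by the positive-definite Hessians $\nabla^2\phi_i^o(\nu^i)$, supplies the transversal relations; it is exactly ellipticity that keeps the complementing determinant from vanishing. Granting this, the Solonnikov theory of parabolic systems (compare \cite{KNP:2021,MNPS:2016}) provides unique solvability of the linear problem with Schauder estimates in $C^{2+\alpha,1+\alpha/2}$, provided the data satisfy the compatibility conditions up to first order.

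These compatibility conditions are precisely the hypotheses of the theorem. The zeroth-order condition at $t=0$ is the Herring balance $\sum_{i=1}^3\nabla\phi_i^o(\nu_{\Sigma^0}^i)=0$, while differentiating the concurrency condition in $t$ and substituting \eqref{special_flows} for $u_t^j$ turns the first-order compatibility into the second-order condition \eqref{no_tangency}; both are assumed. Finally I would close the quasilinear problem by a fixed-point argument: on a small ball in $C^{2+\alpha,1+\alpha/2}$ around $u_0$ and for $T$ small, the map sending $v$ to the solution $u$ of the problem with coefficients and boundary terms frozen at $v$ is well defined by the linear theory and, thanks to the Schauder estimates together with the smallness of $T$, is a contraction. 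Its unique fixed point is the desired special flow; uniqueness of the special flow combined with Remark \ref{rem:cjhalpal} gives uniqueness of the smooth geometric flow, completing the proof.
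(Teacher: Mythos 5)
Your proposal follows essentially the same route as the paper: reduction to a special flow via Remark \ref{rem:cjhalpal}, linearization of the quasilinear system and of the Herring condition about the initial parametrization, solvability of the linear problem by Solonnikov's theory \cite{Sol:1965} after verifying the complementing (Lopatinskii--Shapiro) condition, and a contraction/fixed-point argument in parabolic H\"older spaces, which is exactly the structure of the proof of Theorem \ref{teo:existence_special_mcf}. The one point where you defer the work the paper actually carries out is the complementing condition: the paper computes the boundary determinant explicitly and finds that its nonvanishing rests on the positivity of the surface tensions $\phi_j^o$ (all roots $\tau_j^+$ share the same complex phase, so the squared sums cannot cancel), while ellipticity of the anisotropies enters only through the parabolicity of the interior system --- not, as you suggest, through the positive-definiteness of the Hessians in the linearized Herring operator.
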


As observed in Remark \ref{rem:cjhalpal}, concerning existence we just need to prove the existence of a special flow. Then  uniqueness follows from  uniqueness of the special flow.

We postpone the proof after several auxiliary results. Before going further, we recall some notions related to parabolic H\"older spaces. 
For a function $v: [0,T] \times [0,1] \to \R$ and $\alpha\in(0,1]$ we let 
$$
[v]_{\alpha,x} : = \sup\limits_{t\in[0,T],\,x,y\in[0,1],\,x\ne y}\,\,\frac{|v(t,x) - v(t,y)|}{|x-y|^\alpha },
$$
$$
[v]_{\alpha,t} : = \sup\limits_{s,t\in[0,T],\,s\ne t,\, x\in[0,1]}\,\,\frac{|v(s,x) - v(t,x)|}{|s-t|^\alpha }.
$$
For $\alpha\in(0,1]$ and $k\in\N_0:=\N\cup \{0\}$ we denote by $C^{\frac{k + \alpha}{2}, k+\alpha}([0,T] \times [0,1])$ the space of all functions $v:[0,T]\times[0,1]\to \R$ whose continuous derivatives $\p_t^i\p_x^j v$ exist for all $i,j\in\N_0$ with $2i + j\le k$ and satisfy
\begin{multline*}
\|v\|_{C^{\frac{k+\alpha}{2},k+\alpha}([0,T]\times[0,1])}:=  \sum\limits_{2i+j = 0}^k \sup\limits_{t\in[0,T],\,x\in[0,1]}\,\,|\p_t^i\p_x^j v(t,x)|\\
+ \sum\limits_{2i+j = k} [\p_t^i\p_x^j v]_{\alpha,x} + 
\sum\limits_{k + \alpha - 2i-j<2}\,\,[\p_t^i\p_x^j v]_{\frac{k + \alpha - 2i - j}{2},t} < +\infty.
\end{multline*}
The $C^{\frac{k+\alpha}{2},k+\alpha}$-norm of a \emph{vector valued map} is the sum of the norms of its components.
We also adopt the following conventions:
\begin{itemize}
\item whenever it is clear from the context, we set 
$$
C_T^{\frac{k+\alpha}{2},k+\alpha}:= C^{\frac{k+\alpha}{2},k+\alpha}([0,T] \times [0,1]; \R^2)
$$
and
$$
\|v\|_{C_T^{\frac{k+\alpha}{2},k+\alpha}}: = \|v\|_{C^{\frac{k+\alpha}{2},k+\alpha}([0,T]\times[0,1]; \R^2)};
$$

\item for functions $v$ depending only on one variable (space or time), we set
$$
\holder^{k+\alpha}:= C^{k+\alpha}([0,1]; \R^2)
\quad \text{and}\quad 
\holder_T^{\frac{k+\alpha}{2}}:= C^{\frac{k+\alpha}{2}}([0,T]; \R^2) 
$$
and 
$$
\|v\|_{k+\alpha}:= \|v\|_{C^{k+\alpha}([0,1]; \R^2)}
\quad \text{and}\quad 
\|v\|_{\frac{k+\alpha}{2},T}:= \|v\|_{ C^{\frac{k+\alpha}{2}}([0,T]; \R^2) }.
$$
\end{itemize}
By 
$$
\networks_n^{k,\alpha} 
$$
we denote the set of $n$-tuples $\sigma:=(\sigma^1,\ldots,\sigma^n)\in [\holder^{k+\alpha}]^n$ such that $\bigcup_{i = 1}^n \sigma^i([0,1])$ is a network with only triple junctions.
Similarly, we denote by
$$
\networks_n^{k,\alpha, T} 
$$
the set of all $n$-tuples $\sigma:=(\sigma^1,\ldots,\sigma^n)\in \big[\holder_T^{\frac{k+\alpha}{2},k+\alpha}\big]^n$ such that $\sigma(t) = (\sigma^1(t,\cdot),\ldots,\sigma^n(t,\cdot)) \in \networks_n^{k,\alpha}$ for any $t\in[0,T]$.

We start with a general result related to the existence of special smooth geometric flows. 

\begin{assumption}\label{assumptions}
$\,$
\begin{itemize}
\item[(A)] $\Theta:=(\theta_i,\theta_2,\theta_3)$ are positively one-homogeneous $C^{3+\alpha}$-functions in  $ \R^2\setminus\{0\}$ for some $\alpha\in(0,1],$

\item[(B)] $\cB:=(\beta_1,\beta_2,\beta_3)$ are even $C^{2+\alpha}$-functions defined in a tubular neighborhood of the unit circle $\S^1$ such that  
\begin{equation*} 
0<m \le \min_{\nu\in \S^1} \beta_i(\nu) \le \max_{\nu\in \S^1} \beta_i(\nu)  \le \frac{1}{m},\quad i=1,2,3, 
\end{equation*}
for some $m\in(0,1],$

\item[(C)] $\cF:=(f^1,f^2,f^3)\in (\holder_T^{\alpha})^3.$
\end{itemize}
\end{assumption}

The first variation formula \eqref{first_var_networkos} for length shows that for the anisotropic curvature flow  we need to choose 
$$
\theta_i(\tau) = \phi_i^o(\tau^\perp)\quad\text{and}\quad \beta_i(\tau) = \phi_i^o(\tau^\perp) \nabla^2\phi_i^o(\tau^\perp)\tau\cdot\tau\quad \text{and}\quad f_i=0.
$$ 
Notice that in Assumption \ref{assumptions} (A) we are not assuming $\theta_i$ to be even.

\begin{theorem}\label{teo:existence_special_mcf}
Let $k\ge2,$ $\alpha\in(0,1]$ and let $\sigma \in \networks_3^{k,\alpha}$ satisfy the compatibility conditions 
\begin{align}\label{init_condition}
\begin{cases}
\sigma^1(y) =  \sigma^2(y) =\sigma^3(y), \\
\sum\limits_{i=1}^3 \nabla \theta_i\Big(\tfrac{\sigma_x^i(y)}{|\sigma_x^i(y)|}\Big) = 0, \\
\beta_i\Big(\tfrac{\sigma_x^i(y)}{|\sigma_x^i(y)|}\Big)\, \tfrac{\sigma_{xx}^i(y)}{|\sigma_x^i(y)|^2}
= \beta_j\Big(\tfrac{\sigma_x^j(y)}{|\sigma_x^j(y)|}\Big)\, \tfrac{\sigma_{xx}^j(y)}{|\sigma_x^j(y)|^2},\quad i,j=1,2,3, 
\end{cases}
\end{align}
 whenever  $y = 0$ or $y=1$.
Let $\cF$ be such that 
\begin{equation*}
f^i(t,y) = f^j(t,y)\qquad \text{whenever $(t,y)\in[0,T]\times \{0,1\}.$} 
\end{equation*}
Then there exist  $T>0$ and a unique flow $u(t) = (u^1(t,\cdot),u^2(t,\cdot),u^3(t,\cdot))\in \networks_{3}^{k+2,\alpha,T}$ of networks such that  
\begin{align}\label{existed_flow}
\begin{cases}
u(0,x) = \sigma(x) & \text{for $x\in[0,1],$}\\
u^1(t,y) =  u^2(t,y) =u^3(t,y)  & \text{for $t\in [0,T]$ and $y\in\{0,1\},$}\\
\sum\limits_{i=1}^3\nabla  \theta_i\Big(\tfrac{u_x^i(t,y)}{|u_x^i(t,y)|}\Big) = 0, & \text{for $t\in [0,T]$ and $y\in\{0,1\},$}\\
u_t^i = \beta_i\Big(\tfrac{u_x^i}{|u_x^i|}\Big)\, \tfrac{u_{xx}^i}{|u_x^i|^2} + f^i & \text{in $[0,T]\times[0,1]$ and $i=1,2,3.$} 
\end{cases}
\end{align} 
\end{theorem}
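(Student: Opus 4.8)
\emph{Overall strategy.} By Remark~\ref{rem:cjhalpal} it suffices to produce the special flow solving \eqref{existed_flow}, and I would do this by a Banach fixed-point argument in parabolic H\"older spaces, linearizing both the evolution equations and the Herring condition around the initial datum $\sigma$. It is convenient to write $u^i=\sigma^i+v^i$, so that $v^i(0,\cdot)=0$. Each interior equation $u_t^i=\beta_i(u_x^i/|u_x^i|)\,u_{xx}^i/|u_x^i|^2+f^i$ is uniformly parabolic near $\sigma$: since $\sigma$ is a regular parametrization we have $|\sigma_x^i|\ge c_0>0$, and Assumption~\ref{assumptions}(B) then makes the scalar diffusion coefficient $a_i(p):=\beta_i(p/|p|)/|p|^2$ bounded above and below by positive constants on a neighborhood of $\sigma$. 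The three $\R^2$-valued equations are decoupled in the interior and interact only through the two triple-junction conditions in \eqref{existed_flow} imposed at $y\in\{0,1\}$.

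\emph{The linearized problem.} First I would freeze the nonlinearities: given $\bar u$ in a small closed ball $\mathcal K_{R,T}$ around the time-constant extension of $\sigma$, I evaluate $a_i(\bar u_x^i)$ in the interior and linearize $w\mapsto\sum_i\nabla\theta_i(w_x^i/|w_x^i|)$ at $\bar u$. The result is a \emph{linear} $6\times 6$ parabolic system for $u$: interior equations $u_t^i=a_i(\bar u_x^i)\,u_{xx}^i+(\text{lower order})+g^i$, and, at each endpoint $y\in\{0,1\}$, six scalar boundary conditions — four of Dirichlet type expressing $u^1=u^2=u^3$, and two obtained from the linearized Herring condition, which are first order in $u_x$ and carry the coefficients $\nabla^2\theta_i(\bar\tau^i)$, with $\bar\tau^i=\bar u_x^i/|\bar u_x^i|$. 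This is the correct number of conditions for a second-order system of six scalar equations. I would then appeal to the Schauder theory for such systems (as in \cite{BNR:2003,KNP:2021,MNPS:2016}) to solve it uniquely, with the optimal parabolic H\"older estimate and a constant independent of $T\le 1$; the compatibility conditions \eqref{init_condition} are precisely what makes this estimate valid up to $t=0$.

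\emph{Main obstacle: the complementing condition.} The decisive requirement is that this linear boundary-value problem be well-posed in the H\"older scale, i.e. that its principal part satisfy the Lopatinskii--Shapiro (complementing) condition at each junction. The Dirichlet-type continuity conditions are innocuous; the real content is that the first-order linearized Herring operator, taken together with the continuity conditions, complements the diagonal principal part $\mathrm{diag}(a_i\,\p_x^2)$. This is exactly where ellipticity enters: the second derivative $\nabla^2\theta_i(\bar\tau^i)$ is strictly positive in the direction normal to the $i$-th curve (compare the identity $\kappa^\phi=(\nabla^2\phi^o(\nu_\Sigma)\tau_\Sigma\cdot\tau_\Sigma)\kappa$), and this positivity should force the boundary symbol of the frozen half-line problem to be invertible for all admissible frequencies. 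I expect the algebraic verification of this condition — passing to the constant-coefficient half-line problem at each junction and checking unique solvability of the associated system of ODEs — to be the most delicate step.

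\emph{Fixed point and uniqueness.} With the linear estimate in hand, I would show that the solution operator $\mathcal T:\bar u\mapsto u$ maps $\mathcal K_{R,T}$ into itself and is a contraction for $T$ small. Both facts rest on the Lipschitz dependence of $a_i(\cdot)$, of $f^i$, and of the quadratic remainder of the Herring linearization on $\bar u$, combined with the elementary gain of a positive power $T^\gamma$ when one estimates, in the parabolic norm, differences of functions that vanish at $t=0$. The Banach fixed-point theorem then yields a unique $u$ solving \eqref{existed_flow} in $\mathcal K_{R,T}$. Finally, uniqueness in the whole class follows by applying the same difference estimate to two arbitrary solutions: both agree with $\sigma$ at $t=0$, hence both lie in $\mathcal K_{R,T'}$ for $T'$ small, so they coincide there, and a standard continuation argument extends the equality to all of $[0,T]$.
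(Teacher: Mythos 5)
Your overall skeleton is the same as the paper's: freeze the nonlinearities around a reference state to obtain a linear parabolic system with linearized junction conditions, solve it by Solonnikov's theory \cite{Sol:1965} with an estimate uniform in small $T$, and close with a Banach fixed point; the self-map/contraction estimates and the continuation argument for uniqueness are the routine part and match the paper. Two of your choices deviate harmlessly (the paper freezes all coefficients at the initial datum $\sigma$, so that the linear problem \eqref{eq:parPDE}, \eqref{triple_cond_lin}--\eqref{bound_cond_lin} has time-independent coefficients and the mismatch with $\bar u$ is dumped into the right-hand sides $F_{\bar u}$, $B_{\bar u}$; and it does not substitute $u=\sigma+v$).

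The genuine gap is precisely the step you flag and then skip: the verification of the complementing (Lopatinskii--Shapiro) condition. In the paper this is not a technicality to be ``expected'': it is the mathematical core of the proof, carried out as an explicit computation of a $6\times6$ determinant, which collapses to $\Delta=p\big[\big(\sum_j b_{j1}\alpha_j^{-1/2}\big)^2+\big(\sum_j b_{j2}\alpha_j^{-1/2}\big)^2\big]\ne0$ because parabolicity ($\alpha_j>0$, from Assumption \ref{assumptions}(B)) places all roots $\tau_j^+$ of \eqref{def:tau_j_plus} on a common complex ray and the boundary blocks $\bQ_j$ are conformal (a multiple of $\bI$ plus a multiple of the $90^\circ$ rotation). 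Moreover, the heuristic you offer for why the condition should hold is not the right one for this theorem, and it is tied to a linearization that differs from the paper's in an essential way. You linearize the Herring condition by its Fr\'echet derivative, so your boundary blocks are $\nabla^2\theta_i(\bar\tau^i)$; by one-homogeneity these are the rank-one matrices $c_i\,\bar\nu^i\otimes\bar\nu^i$ with $c_i=\nabla^2\theta_i(\bar\tau^i)\bar\nu^i\cdot\bar\nu^i$, and invertibility of the resulting boundary symbol requires both $c_i>0$ (ellipticity of $\theta_i$) and that the junction normals span $\R^2$. Neither is granted by Theorem \ref{teo:existence_special_mcf}: Assumption \ref{assumptions}(A) imposes no convexity on $\theta_i$ (and $\beta_i$ is decoupled from $\theta_i$), so $c_i$ may vanish -- in the extreme case $\nabla^2\theta_i\equiv0$ your linearized Herring condition disappears entirely and the linear problem is underdetermined. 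The paper sidesteps this by never differentiating $\theta_j$ twice: it rewrites the junction condition exactly through the homogeneity identity $\nabla\theta_j(a)=\theta_j(a)\,a+(\nabla\theta_j(a)\cdot a^\perp)\,a^\perp$, freezes the two scalar coefficients $b_{j1},b_{j2}$ of \eqref{def:b_ij} at $\sigma$, and then the determinant is nonzero as soon as, e.g., the $\theta_j$ are positive (as they are in the application $\theta_j=\phi_j^o((\cdot)^\perp)$) -- ellipticity plays no role. So, as written, your plan either proves a weaker statement (elliptic $\theta_i$, non-tangential junctions) or must be replaced by the paper's linearization together with its explicit symbol computation.
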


To prove the theorem we follow the arguments of \cite[Section 3]{KNP:2021}. Namely, we linearize the problem near the initial and boundary data, then using the results of Solonnikov \cite{Sol:1965} we solve the linear problem, and finally, using careful H\"older estimates we reduce the problem to a Banach fixed point argument. Note that in \cite{KNP:2021} the authors consider just one anisotropy  and networks having a single triple junction together with a Dirichlet boundary condition. 

We divide the proof into several steps.

\subsubsection{Main functional spaces} For $T>0,$ $k\ge2,$ $\alpha\in(0,1]$ and $M>0$ let 
\begin{equation*}
X_{M,T}^j:= \Big\{v\in \holder_T^{\frac{k+\alpha}{2},k+\alpha}:\,\, \|v\|_{C_T^{\frac{k+\alpha}{2},k+\alpha}}\le M,\,\,v(0,\cdot) = \sigma^j(\cdot)  \Big\} 
\end{equation*}
for $j =1,2,3.$ Note that if $M$ is large, then $X_{M,T}^j$ is non-empty. 

\begin{lemma}\label{lem:regular_parametirze}
For $\sigma \in \networks_3^{k,\alpha}$ assume that 
$$
\min\limits_{z\in[0,1]} |\sigma_x^j(z)| = \delta>0,\quad j=1,2,3.
$$
Then for any $v\in X_{M,T}^j$ one has
$$
\min\limits_{t\in[0,T],\,z\in[0,1]} |v_x(t,z)| \ge \delta/2 
$$
provided that 
\begin{equation}\label{maxT}
T\le \frac{\delta}{2M}. 
\end{equation}
\end{lemma}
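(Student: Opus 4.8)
The plan is to exploit the definition of the space $X_{M,T}^j$, which forces $v$ to start at $\sigma^j$ and to have its full parabolic H\"older norm bounded by $M$. In particular, since $k\ge 2$, the derivative $v_x(t,z)$ is controlled and its $C^{\frac{k+\alpha}{2},k+\alpha}$-regularity includes H\"older continuity in time. The key observation is that the bound $\|v\|_{C_T^{\frac{k+\alpha}{2},k+\alpha}}\le M$ controls the time-modulus of continuity of $v_x$: more precisely, for $2i+j=k\ge 2$ we have control on $\p_x v$, and the term $[\p_x v]_{\frac{k+\alpha-1}{2},t}$ (or, more elementarily, the sup-bound on $v_t$ together with $v_{tx}$) gives a Lipschitz-type estimate of $v_x$ in $t$. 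I would therefore first isolate the precise estimate $|v_x(t,z)-v_x(0,z)|\le C\,t^{\gamma}$ for an appropriate exponent, but the cleanest route is to bound $|v_x(t,z)-v_x(0,z)|$ by $M\,t$ using the sup-bound on the mixed derivative $v_{tx}$ (available since $k\ge 2$ means $2\cdot 0 + 1 \le k$ for the first spatial derivative, and $v_t$ with one further spatial derivative is controlled by the norm).

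First I would write, for fixed $z$ and the initial condition $v(0,\cdot)=\sigma^j$,
\begin{equation*}
v_x(t,z) - \sigma_x^j(z) = v_x(t,z) - v_x(0,z) = \int_0^t v_{tx}(s,z)\,ds,
\end{equation*}
so that
\begin{equation*}
|v_x(t,z) - \sigma_x^j(z)| \le t\,\sup_{[0,T]\times[0,1]}|v_{tx}| \le t\,\|v\|_{C_T^{\frac{k+\alpha}{2},k+\alpha}} \le M\,t.
\end{equation*}
Here the middle inequality uses that $v_{tx}=\p_t^1\p_x^1 v$ satisfies $2\cdot 1 + 1 = 3 \le k+\alpha$ only when $k\ge 3$; for the borderline case $k=2$ one instead uses the H\"older-in-time seminorm of $v_x$, giving $|v_x(t,z)-\sigma_x^j(z)|\le M\,t^{\frac{1+\alpha}{2}}$, and one adjusts the threshold on $T$ accordingly. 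In either formulation the point is a uniform bound of the form $|v_x(t,z)-\sigma_x^j(z)|\le M\,\omega(t)$ with $\omega(t)\to 0$.

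Having this, I would conclude by the reverse triangle inequality and the hypothesis $\min_z|\sigma_x^j(z)|=\delta$:
\begin{equation*}
|v_x(t,z)| \ge |\sigma_x^j(z)| - |v_x(t,z)-\sigma_x^j(z)| \ge \delta - M\,t.
\end{equation*}
Imposing $M\,t\le \delta/2$, i.e. $t\le \delta/(2M)$, which is guaranteed by \eqref{maxT}, yields $|v_x(t,z)|\ge \delta/2$ uniformly in $t\in[0,T]$ and $z\in[0,1]$, as claimed.

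The only genuinely delicate point is pinning down which norm term legitimately bounds the time-increment of $v_x$, and correspondingly whether the natural threshold is linear ($M\,t$) or of order $t^{(1+\alpha)/2}$; this is purely a matter of reading off the correct seminorm from the definition of $C^{\frac{k+\alpha}{2},k+\alpha}$. Since the statement of the lemma records the bound as $T\le \delta/(2M)$, the intended estimate is the linear one, which is cleanest to justify when $k\ge 3$; for $k=2$ one uses that $v_t$ and $v_{tx}$ are controlled (the norm includes derivatives up to order $k$ in space and the correct mixed orders), so that the fundamental-theorem-of-calculus argument above goes through with the linear modulus. Everything else is routine, so I expect no substantial obstacle beyond this bookkeeping.
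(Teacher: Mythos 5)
Your argument is exactly the paper's proof: the paper writes $v_x(t,z) = v_x(0,z) + \int_0^t v_{tx}(s,z)\,ds$, bounds the integral by $Mt$ via the norm bound defining $X_{M,T}^j$, and concludes $|v_x(t,z)| \ge |v_x(0,z)| - Mt \ge \delta - Mt \ge \delta/2$ under \eqref{maxT}, which is precisely your fundamental-theorem-of-calculus plus reverse-triangle-inequality scheme. The bookkeeping worry you raise for $k=2$ (that $\sup|v_{tx}|$ is not literally one of the terms of the $C_T^{\frac{k+\alpha}{2},k+\alpha}$ norm, only the seminorm $[v_x]_{\frac{1+\alpha}{2},t}$ is) applies verbatim to the paper's own proof, which silently uses the linear bound $\int_0^t |v_{tx}(s,z)|\,ds \le Mt$; so your proposal reproduces the intended argument, subtlety included.
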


\begin{proof}
Since
$$
v_x(t,z) = v_x(0,z) + \int_0^t v_{tx}(s,z)ds,
$$
by the H\"older estimate of $v\in X_{M,T}^j$ we have 
$$
|v_x(t,z)| \ge |v_x(0,z)| - \int_0^t |v_{tx}(s,z)|ds \ge \delta - Mt \ge \frac{\delta}{2},
$$
whenever $t\le T\le \frac{\delta}{2M}.$
\end{proof}

From now on we assume \eqref{maxT} and we set 
\begin{equation}\label{main_func_space1}
\cR_{M,T}: =  \prod_{j=1}^3 X_{M,T}^j. 
\end{equation}
This will be our main functional space.

\subsubsection{Linearized problem}

Given $\bar u=(\bar u_1,\bar u_2,\bar u_3)\in\cR_{M,T},$ we look for $u = (u_1,u_2,u_3)\in \big[\holder_T^{\frac{k+\alpha}{2},k+\alpha}\big]^3$ solving the linear system of PDE's
\begin{equation}\label{eq:parPDE}
\begin{cases} 
& u_t^j - \alpha_j u_{xx}^j = F_{\bar u}^j,\\
& u^j(0,x) = \sigma^j(x), 
\end{cases}
\end{equation}
where $j=1,2,3,$
$$
\alpha_j: = \frac{\beta_j(\frac{\sigma_x^j}{|\sigma_x^j|})}{|\sigma_x^j|^2},\qquad 
F_{\bar u}^j:= f^j + 
\Bigg(\frac{\beta_j(\frac{\bar u_x^j}{|\bar u_x^j|})}{|\bar u_x^j|^2} -
\alpha_j\Bigg)\bar u_{xx}^j,
$$
with the linearized boundary conditions
\begin{align}
& u^1(t,y) =  u^2(t,y) =u^3(t,y), \label{triple_cond_lin}\\
& \sum\limits_{j=1}^3 \Bigg(\theta_j\Big(\tfrac{\sigma_x^j(y)}{|\sigma_x^j(y)|}\Big)\,\tfrac{u_x^j (t,y)}{|\sigma_x^j(y)|} + \bigg[\nabla \theta_j\Big(\tfrac{\sigma_x^j(y)}{|\sigma_x^j(y)|}\Big)\cdot \tfrac{[\sigma_x^j(y)]^\perp}{|\sigma_x^j(y)|}\bigg]\tfrac{[u_x^j(t,y)]^\perp}{|\sigma_x^j(y)|}\Bigg) = B_{\bar u}(t,y) \label{bound_cond_lin}
\end{align}
for $(t,y)\in[0,T]\times\{0,1\},$ where 
\begin{multline*}
B_{\bar u}(t,y):= \sum\limits_{j=1}^3 \Bigg\{ \Big[\theta_j\Big(\tfrac{\sigma_x^j(y)}{|\sigma_x^j(y)|}
\Big)\,\tfrac{1}{|\sigma_x^j(y)|} 
-
\theta_j\Big(\tfrac{\bar u_x^j(t,y)}{|\bar u_x^j(t,y)|}
\Big)\,\tfrac{1}{|\bar u_x^j(t,y)|} 
\Big]\, \bar u_x^j (t,y)\\
+ \bigg(\Big[\nabla \theta_j\Big(\tfrac{\sigma_x^j(y)}{|\sigma_x^j(y)|}\Big)\cdot \tfrac{[\sigma_x^j(y)]^\perp}{|\sigma_x^j(y)|}\Big]\tfrac{1}{|\sigma_x^j(y)|} 
-
\Big[\nabla \theta_j\Big(\tfrac{\bar u_x^j(t,y)}{|\bar u_x^j(t,y)|}\Big)\cdot \tfrac{[\bar u_x^j(t,y)]^\perp}{|\bar u_x^j(t,y)|}\Big]\tfrac{1}{|\bar u_x^j(t,y)|}\bigg)
[\bar u_x^j(t,y)]^\perp
\Bigg\}.
\end{multline*}
In the linearization \eqref{bound_cond_lin} of 
condition \eqref{herring_condosh_normals} 
we used the positive one-homogeneity and continuous differentiability of $\theta_j,$  hence,
$$
\theta_j(a) = \nabla \theta_j(a)\cdot a,\quad a\in \R^2\setminus\{0\}, 
$$
which implies 
$$
\nabla \theta_j(a) = (\nabla \theta_j(a)\cdot a)\, a + (\nabla \theta_j(a)\cdot a^\perp)\,a^\perp 
=\theta_j(a)\,a + (\nabla \theta_j(a)\cdot a^\perp)\,a^\perp,\quad a\in\S^1.
$$

\subsubsection{Solvability of the linear problem}
To check solvability we need to check that the linear system is compatible with boundary and initial data \cite{Sol:1965}.

We use the Fourier symbols $p=\p_t$ and $\xi = \p_x.$ 
The linear operator corresponding to the linear system \eqref{eq:parPDE} has the $6\times 6$-matrix 
$$
\cL(x,p,\xi) 
=
\begin{pmatrix}
(p - \alpha_1 \xi^2)\,\bI & \bO & \bO\\
\bO & (p - \alpha_2 \xi^2)\,\bI & \bO\\
\bO & \bO & (p - \alpha_3 \xi^2)\,\bI 
\end{pmatrix},
$$ 
where $\alpha_i:=\alpha_i(x),$ $\xi,p\in\C$ and 
$$
\bI = 
\begin{pmatrix}
1 & 0\\
0 & 1
\end{pmatrix}
,
\qquad
\bO = 
\begin{pmatrix}
0 & 0\\
0 & 0
\end{pmatrix}
.
$$
In particular, for $\ri = \sqrt{-1}$ and $\xi,p\in\C$
$$
L(x,p,\ri\xi) := \det \cL(x,p,\ri\xi) = \prod_{j=1}^3   (p + \alpha_j\xi^2)^2,
$$
and the matrix 
$$
\hat \cL(x,p,\ri\xi):= L(x,p,\ri\xi)\, \cL(x, p, \ri \xi)^{-1} 
$$ 
reads as 
\begin{multline*}
\hat \cL(x,p,\ri\xi) 
=\\
\begin{pmatrix}
(p + \alpha_2 \xi^2)(p + \alpha_3 \xi^2)\,\bI & \bO & \bO\\
\bO & (p + \alpha_1 \xi^2)(p + \alpha_3 \xi^2)\,\bI & \bO\\
\bO & \bO & (p + \alpha_1 \xi^2)(p + \alpha_2 \xi^2)\,\bI 
\end{pmatrix}.
\end{multline*} 
Since 
$$
\alpha_j = \frac{\beta_j(\sigma_x^j / |\sigma_x^j|)}{|\sigma_x^j|^2} \ge m\,\min\Big\{\tfrac{1}{|\sigma_x^j(x)|}:\,\,j=1,2,3,\,\,x\in[0,1]\Big\} > 0,
$$
the system \eqref{eq:parPDE} is parabolic, here $m$ is given by Assumption \ref{assumptions} (B).

Following \cite{KNP:2021},
fix $p\in\C\setminus\{0\}$ with $ \Re(p)\ge0$. Then  the polynomial $L(x,p,\ri\tau),$ $\tau\in\C,$ has six roots with positive imaginary part and six roots with negative imaginary part. More precisely, setting $p = |p|e^{\ri \zeta_p}$ with $|\zeta_p|\le \pi/2$ and
\begin{align}
\tau_j^+:=\tau_j^+(x,p) = & \sqrt{\frac{|p|}{\alpha_j}}\,e^{\ri \big(\tfrac\pi2 + \tfrac{\zeta_p}{2}
\big)}, \label{def:tau_j_plus}\\
\tau_j^-:=\tau_j^-(x,p) = & \sqrt{\frac{|p|}{\alpha_j}}\,e^{\ri \big(\tfrac{3\pi}{2} + \tfrac{\zeta_p}{2}
\big)}, \nonumber
\end{align}
we may write 
$$
L(x,p,\ri\tau) = \prod\limits_{j=1}^3 \alpha_j^2 (\tau - \tau_j^+)^2(\tau - \tau_j^-)^2.
$$
Let
$$
P^+:= P^+(x,p,\tau) = \prod\limits_{j=1}^3 (\tau - \tau_j^+)^2.
$$
Now we turn to define the matrix associated to the boundary conditions \eqref{triple_cond_lin}-\eqref{bound_cond_lin}. It is given as 
$$
\cB(y,\xi) = 
\begin{pmatrix}
\bI & -\bI & \bO\\ 
\bO & \bI & -\bI\\
\bQ_1 & \bQ_2 & \bQ_3
\end{pmatrix}
,\qquad y=0,1,\quad \xi\in\C,
$$
where
$$
\bQ_j: = b_{j1}\,
\begin{pmatrix}
\xi & 0\\
0 & \xi
\end{pmatrix}
+
b_{j2}\,
\begin{pmatrix}
0 & -\xi\\
\xi & 0
\end{pmatrix},\quad j=1,2,3,
$$
with all the coefficients  evaluated at $y = 0$ and $y = 1,$ and 
\begin{equation}
b_{j1}:= \tfrac{1}{|\sigma_x^j(y)|}\, \theta_j\Big(\tfrac{\sigma_x^j(y)}{|\sigma_x^j(y)|}\Big),\qquad 
b_{j2} :=   \nabla \theta_j\Big(\tfrac{\sigma_x^j(y)}{|\sigma_x^j(y)|}\Big)\cdot \tfrac{[\sigma_x^j(y)]^\perp}{|\sigma_x^j(y)|^2}.
\label{def:b_ij}
\end{equation}

Consider the matrix 
\begin{multline*}
\cA(y,p,\ri\tau) := 
\cB(y,\ri\tau) \hat \cL(y,p,\ri\tau)
= \\
\begin{pmatrix}
(p + \alpha_2 \tau^2)(p + \alpha_3 \tau^2)\,\bI & -(p + \alpha_1 \tau^2)(p + \alpha_3 \tau^2)\,\bI & \bO\\
\bO & (p + \alpha_1 \tau^2)(p + \alpha_3 \tau^2)\,\bI & -(p + \alpha_1 \tau^2)(p + \alpha_2 \tau^2)\,\bI\\
(p + \alpha_2 \tau^2)(p + \alpha_3 \tau^2)\,\bQ_1 & (p + \alpha_1 \tau^2)(p + \alpha_3 \tau^2)\,\bQ_2 & 
(p + \alpha_1 \tau^2)(p + \alpha_2 \tau^2)\,\bQ_3
\end{pmatrix}
.
\end{multline*}
By definition, the complementary condition holds \cite{Sol:1965} if the rows of this matrix are linearly independent modulo $P^+$ whenever $p\ne0$ with $\Re( p )\ge0.$
Thus, we need to check if $w\in \R^6$
(considered as a $1\times6$-matrix) is such that
$$
w\cdot \cA(y,p,\ri\tau) = (0,0,0,0,0,0)\quad\mod\,P^+,
$$ 
then $w = 0.$ This equation yields six linear equations. For instance, for the first column of $\cA$ we have 
$$
(p + \alpha_2 \tau^2)(p + \alpha_3 \tau^2)\,(w_1 + \ri b_{11} w_5\tau  + \ri b_{12} w_6\tau) = 0 \quad  \mod P^+.
$$
Then by the definition of $P^+$ we have 
$$
w_1 + \ri b_{11} w_5\tau  + \ri b_{12} w_6\tau= 0\quad \mod\,(\tau - \tau_1^+),
$$
or equivalently,
$$
w_1 + \ri b_{11} w_5\tau_1^+  + \ri b_{12} w_6\tau_1^+ = 0.
$$
Treating similarly the remaining columns we get 
\begin{equation*}
\begin{cases}
 w_1 + \ri b_{11} \tau^+_1 \, w_5 + \ri b_{12} \tau^+_1 \, w_6= 0,\\
 w_2 - \ri b_{12}\tau^+_1\, w_5 +  \ri b_{11} \tau^+_1\, w_6 = 0,\\
-w_1 + w_3 + \ri b_{21}\tau^+_2\,w_5 + \ri b_{22}\tau^+_2\,w_6 = 0,\\
-w_2 + w_4 - \ri b_{22}\tau^+_2\,w_5 + \ri b_{21}\tau^+_2\,w_6 = 0,\\
- w_3 + \ri b_{31} \tau^+_3 \, w_5 + \ri b_{32} \tau^+_3 \, w_6= 0,\\
 -w_4 - \ri b_{32}\tau^+_3\, w_5 +  \ri b_{31} \tau^+_3\, w_6 = 0.
\end{cases}
\end{equation*}
The determinant of this system is computed as 
\begin{align*}
\Delta:= & \det 
\begin{pmatrix}
1 & 0 & 0 & 0 & \ri b_{11}\tau_1^+ & \ri b_{12}\tau_1^+ \\
0 & 1 & 0 & 0 & - \ri b_{12}\tau_1^+ & \ri b_{11}\tau_1^+ \\
-1 & 0 & 1 & 0 & \ri b_{21}\tau_2^+ & \ri b_{22}\tau_2^+ \\
0 & -1 & 0 & 1 & -\ri b_{22}\tau_2^+ & \ri b_{21}\tau_2^+ \\
0 & 0 & -1 & 0 & \ri b_{31}\tau_3^+ & \ri b_{32}\tau_3^+ \\
0 & 0 & 0 & -1 & -\ri b_{32}\tau_3^+ & \ri b_{31}\tau_3^+
\end{pmatrix}
\\
= & -(b_{11} \tau_1^+ + b_{21} \tau_2^+ +b_{31} \tau_3^+) ^2 -
(b_{12} \tau_1^+ + b_{22} \tau_2^+ +b_{32} \tau_3^+) ^2.
\end{align*}
Now recalling the definitions of $b_{ij}$ in \eqref{def:b_ij} and of $\tau_j^+$ in \eqref{def:tau_j_plus} we get 
$$
\Delta = p \bigg[\sum\limits_{j = 1}^3 \theta_j \Big(\tfrac{\sigma_x^j}{|\sigma_x^j|^2\alpha_j}\Big)\bigg]^2 + p \bigg[\sum\limits_{j = 1}^3 
\nabla \theta_j \Big(\tfrac{\sigma_x^j}{|\sigma_x^j|}\Big) \cdot \tfrac{[\sigma_x^j]^\perp}{|\sigma_x^j|^2\alpha_j}\bigg]^2 \ne 0.
$$
Thus, $w = 0.$

Now we check the complementary conditions for the initial datum. Let $\cC$ be the $6\times6$-identity matrix. Note that at $t=0$ we have $\cC u = \sigma.$ We need to check that the rows of the matrix 
$$
\cD(x,p) = \cC\cdot \hat \cL(x,p,0)
=
\begin{pmatrix}
p^2 \bI & \bO & \bO \\
\bO & p^2 \bI & \bO \\
 \bO & \bO & p^2 \bI 
\end{pmatrix} 
$$
are linearly independent modulo $L(x,p,0) = p^6,$ which is obvious.

In view of \eqref{init_condition}  the linear problem satisfies the compatibility condition of order $0,$ and therefore, by the theory of linear parabolic systems \cite{Sol:1965}, there exists a unique solution $u\in \big[C_T^{\frac{2+\alpha}{2},2+\alpha} \big]^3$ of  \eqref{eq:parPDE}-\eqref{bound_cond_lin} satisfying 
\begin{equation}\label{resolv_estimate12}
\sum\limits_{j=1}^3 \|u^j\|_{C_T^{\frac{2+\alpha}{2},2+\alpha}} \le 
C_0\bigg[
 \sum\limits_{j=1}^3 \Big(\|F_{\bar u}^j\|_{C_T^{\frac{\alpha}{2},\alpha}} 
+ 
\|\sigma^j\|_{C^{2+\alpha}} \Big)
+ 
\|B_{\bar u}\|_{C_T^{\frac{1+\alpha}{2}}}
\bigg], 
\end{equation} 
where $C_0>0$ does not depend on $T.$

\subsubsection{Self-map property}

Now taking a larger $M>1$ and a smaller $T$ if necessary, we show that for any $\bar u\in \cR_{M,T}$ the unique solution $u$ of \eqref{eq:parPDE}-\eqref{bound_cond_lin} also belongs to $\cR_{M,T}.$

To this aim we estimate $\|F_{\bar u}^j\|_{C_T^{\frac{\alpha}{2},\alpha}}$ and $\|B_{\bar u}\|_{C_T^{\frac{1+\alpha}{2}}}$ in \eqref{resolv_estimate12}.
By the definition of $F_{\bar u}^j$ we have 
$$
\|F_{\bar u}^j\|_{C_T^{\frac{\alpha}{2},\alpha}} \le \|f^j\|_{C_T^{\frac{\alpha}{2},\alpha}}  + C_1\Bigg\| 
\Bigg(\frac{\beta_j(\frac{\bar u_x^j}{|\bar u_x^j|})}{|\bar u_x^j|^2} -
\frac{\beta_j(\frac{\sigma_x^j}{|\sigma_x^j|})}{|\sigma_x^j|^2}\Bigg) \Bigg\|_{C_T^{\frac{\alpha}{2},\alpha}} \|\bar u_{xx}^j\|_{C_T^{\frac{\alpha}{2},\alpha}}.
$$
Since $\beta_j\in C^{2+\alpha}$ out of the origin  and $|u|\ge \delta /2$ for all $u\in \cR_{M,T}$ (Lemma \ref{lem:regular_parametirze}),  by Lemma \ref{lem:holder_estimates_for_ai} 
$$
\Bigg\| 
\Bigg(\frac{\beta_j(\frac{\bar u_x^j}{|\bar u_x^j|})}{|\bar u_x^j|^2} -
\frac{\beta_j(\frac{\sigma_x^j}{|\sigma_x^j|})}{|\sigma_x^j|^2}\Bigg) \Bigg\|_{C_T^{\frac{\alpha}{2},\alpha}} \le 3C_2M^2\|\bar u_x^j - \sigma_x^j\|_{C_T^{\frac{\alpha}{2},\alpha}}
$$
for some $C_2>0$ depending only on $\delta,$ $\|\beta^j\|_\infty,$ $\|\nabla \beta^j\|_\infty$ and $\|\nabla^2\beta^j\|_\infty.$ 
Since $\bar u^j(0,\cdot) = \sigma^j,$ by the fundamental theorem of calculus and the choice of $\cR_{M,T}$ we have
$$
\|\bar u_x^j - \sigma_x^j\|_{C_T^{\frac{\alpha}{2},\alpha}} \le MT
$$
and therefore, taking into account also $\|\bar u_{xx}^j\|_{C_T^{\frac{\alpha}{2},\alpha}} \le M$ we get 
$$
\|F_{\bar u}^j\|_{C_T^{\frac{\alpha}{2},\alpha}} \le \|f^j\|_{C_T^{\frac{\alpha}{2},\alpha}}  + 3C_1C_2M^4T.
$$
Similarly, 
$$
\|B_{\bar u}\|_{C_T^{\frac{1+\alpha}{2}}} \le C_3M^4T
$$
for some constant $C_3$ depending only on $\|\theta_j\|_\infty$ $\|\nabla\theta_j\|_\infty,$ $\|\nabla^2\theta_j\|_\infty,$ $\|\nabla^3\theta_j\|_\infty$ and $\delta.$

Inserting these estimates in \eqref{resolv_estimate12} we get 
$$
\|u\|_{C_T^{\frac{2+\alpha}{2},2+\alpha}} \le C_0\sum\limits_{i=1}^3 \Big[ \|f^j\|_{C_T^{\frac{\alpha}{2},\alpha}} + \|\sigma^j\|_{C^{2+\alpha}} \Big]
+ C_0(3C_1C_2 + C_3)M^4T.
$$
Hence, if we choose 
$$
M :=   1 + 2 C_0\sum\limits_{i=1}^3  \Big[ \|f^j\|_{C_T^{\frac{\alpha}{2},\alpha}} + \|\sigma^j\|_{C^{2+\alpha}} \Big],
$$
then 
$$
\|u\|_{C_T^{\frac{2+\alpha}{2},2+\alpha}} \le M
$$
provided 
$$
T \le \frac{M + 1}{2C_0(3C_1C_2 + C_3)M^4}.
$$

\subsubsection{Contraction property}

Given $\bar u,\bar v\in\cR_{M,T},$ let $u=\cS_{\bar u}$ and $ v = \cS_{\bar v} \in \cR_{M,T}$ be the corresponding solutions to \eqref{eq:parPDE}-\eqref{bound_cond_lin}. Choosing $T$ smaller if necessary let us show that 
$$
\|\cS_{\bar u} - \cS_{\bar v}\|_{C_T^{\frac{2+\alpha}{2},2+\alpha}} \le 
\frac12\,\|\bar u - \bar v\|_{C_T^{\frac{2+\alpha}{2},2+\alpha}}.
$$
Let $w=u-v.$ 
Then $w$ solves the linear system 
$$
\begin{cases}
w_t^j - \alpha_j w_{xx}^j = F_{\bar u}^j - F_{\bar v}^j,\\
w^j(0,\cdot) = 0,
\end{cases}
$$
coupled with the boundary conditions
\begin{align*}
& w^1(t,0) =  w^2(t,0) = w^3(t,0), \\
&\sum\limits_{j=1}^3 \Bigg(\theta_j\Big(\tfrac{\sigma_x^j(y)}{|\sigma_x^j(y)|}\Big)\,\tfrac{w_x^j (t,y)}{|\sigma_x^j(y)|} + \bigg[\nabla\theta_j\Big(\tfrac{\sigma_x^j(y)}{|\sigma_x^j(y)|}\Big)\cdot \tfrac{[\sigma_x^j(y)]^\perp}{|\sigma_x^j(y)|}\bigg]\tfrac{[w_x^j(t,y)]^\perp}{|\sigma_x^j(y)|}\Bigg) \\
&\hspace*{1cm}= B_{\bar u}(t,y) - B_{\bar v}(t,y),
\end{align*}
for $y=0,1$. As we checked above, this linear system satisfies the compatibility and complementary conditions, and thus it  admits a unique solution, satisfying 
$$
\|w\|_{C_T^{\frac{2+\alpha}{2},2+\alpha}} \le 
C_0\bigg[\Big(\|F_{\bar u} - F_{\bar v} \|_{C_T^{\frac{\alpha}{2},\alpha}} 
+  
\|B_{\bar u} - B_{\bar v} \|_{C_T^{\frac{1+\alpha}{2}}}
\bigg].
$$
Therefore, repeating the same arguments above we find 
$$
\|w\|_{C_T^{\frac{2+\alpha}{2},2+\alpha}} \le C_4 T\|\bar u - \bar v\|_{C_T^{\frac{2+\alpha}{2},2+\alpha}},
$$
where $C_4$ depends only on $\delta,$ $M,$ $\beta_j$ and $\theta_j.$ Now possibly reducing $T>0$ if necessary we deduce the required contraction property.

\subsubsection{Proof of Theorem \ref{teo:existence_special_mcf}}

Finally, using the Banach fixed point theorem we conclude that there exists a unique $u\in\cR_{M,T}$ solving   system \eqref{existed_flow}. 

\subsection{Evolution of networks in the Euclidean setting}

Starting from the work  \cite{BR:1993}, a vast literature is dedicated to the curvature-driven flow of networks
(see e.g. \cite{INS:2014,MMN:2016,MNP:2017,MNPS:2016,MNT:2004} and references therein). In this section we shortly describe known results related to evolution of networks in the Euclidean setting; we refer to the recent survey \cite{MNPS:2016} for more details.

In the Euclidean setting, the condition
\eqref{herring_condosh_normals} at the triple junctions reduces to a $120^\circ$-condition between normals. The existence and regularity of a flow with Dirichlet boundary conditions has been established, for instance, in \cite{BR:1993,MNT:2004}. Here one needs to assume the $120^\circ$-condition at all triple junctions of the initial network.

The behaviour of such a smooth flow near the maximal time, as in the  curvature evolution of closed curves, is obtained using integral estimates for the curvature. Namely,

\begin{theorem}
Let $\{\Sigma_t\}_{t\in[0,T)}$ be the smooth geometric flow in the maximal time interval $[0,T),$ starting from an (admissible) network $\Sigma_0$ in a bounded convex open set $\Omega$ with Dirichlet boundary conditions on $\p\Omega$. Then: 
\begin{itemize}
 \item either the lower limit of the length of at least one curve in $\Sigma_t$ converges to $0$ as $t\nearrow T,$
 
 \item or 
 $$
 \limsup\limits_{t\nearrow T} \int_{\Sigma_t} \kappa^2\,d x = +\infty.
 $$
\end{itemize}
Moreover, if the lengths of all curves in $\Sigma_t$ are uniformly bounded away from zero as $t\nearrow T,$ then there exists $C>0$ such that 
\begin{equation}
 \int_{\Sigma_t} \kappa^2\,d x \ge \frac{C}{\sqrt{T-t}}\qquad\text{for all $t<T.$}
 \label{vrachlar_oyligi}
\end{equation}

\end{theorem}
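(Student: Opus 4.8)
The plan is to prove the quantitative estimate \eqref{vrachlar_oyligi} by an energy argument and to read off the dichotomy from it together with a continuation argument. Throughout I parametrize each curve of $\Sigma_t$ by Euclidean arclength $s$ and write $\tau$, $\nu$ for the unit tangent and normal; in the Euclidean case the normal velocity equals $\kappa$, so along the flow the curvature and the length element satisfy
\begin{equation*}
\p_t\kappa = \p_s^2\kappa + \kappa^3, \qquad \p_t(ds) = -\kappa^2\,ds,
\end{equation*}
the time derivatives being taken in the normal sense (the tangential reparametrization does not alter the geometric quantity $E(t):=\int_{\Sigma_t}\kappa^2\,ds$).

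First I would differentiate the energy. Using the two identities above and integrating by parts once on each curve I obtain
\begin{equation*}
\frac{d}{dt}\,E(t) = -2\int_{\Sigma_t}(\p_s\kappa)^2\,ds + \int_{\Sigma_t}\kappa^4\,ds + \cR(t),
\end{equation*}
where $\cR(t)$ gathers the boundary contributions at the endpoints: the terms $2[\kappa\,\p_s\kappa]$ produced by the integration by parts, together with the terms coming from the junctions moving. On $\p\Omega$ the Dirichlet pinning forces the velocity, hence $\kappa$, to vanish at the endpoint, so those contributions drop. At each triple junction I would differentiate the concurrency condition and the $120^\circ$ Herring condition \eqref{herring_condosh_normals} along the flow, producing the relations between the three curvatures, their arclength derivatives and the common junction velocity, and verify that the junction part of $\cR(t)$ cancels or is absorbed into $-2\int(\p_s\kappa)^2$. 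Controlling $\cR(t)$ is the main obstacle: it is precisely where the network structure enters, in contrast with the closed-curve case where $\cR\equiv0$.

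Next I would estimate the quartic term by interpolation. As long as every curve of $\Sigma_t$ has length bounded below by some $\ell_0>0$, the Gagliardo--Nirenberg inequality on a curve gives, for every $\varepsilon>0$,
\begin{equation*}
\int_{\Sigma_t}\kappa^4\,ds \le \varepsilon\int_{\Sigma_t}(\p_s\kappa)^2\,ds + C_\varepsilon\,E(t)^3,
\end{equation*}
up to lower-order terms controlled by $\ell_0$ and by the (bounded) total length. The lower bound on the lengths is exactly what keeps the interpolation constant finite, which is why it is a hypothesis of the second part. Choosing $\varepsilon$ small to absorb the first term into the negative term of the energy identity, I arrive at the differential inequality $E'(t)\le C\,E(t)^3$ for all $t$ with $E(t)\ge1$.

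Finally I would combine the pieces. For the dichotomy I argue by contradiction: if $T<\infty$ were the maximal time while the lengths stayed bounded below and $\limsup_{t\nearrow T}E(t)<\infty$, then $E$ would be bounded and a bootstrap on the evolution equations for the higher derivatives $\p_s^m\kappa$ (each controlled in $L^2$ by a Gronwall argument using the same interpolation inequalities, the boundary terms treated as above) would yield uniform $C^\infty$ bounds up to $t=T$; the flow would converge to an admissible network $\Sigma_T$ with positive curve lengths and $120^\circ$ junctions, from which the short-time existence result for the Euclidean network flow lets me restart and extend beyond $T$, contradicting maximality. Hence $\limsup_{t\nearrow T}E(t)=+\infty$, which is the second alternative. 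For the rate, I rewrite $E'\le CE^3$ as $\frac{d}{dt}\big(E^{-2}\big)=-2E^{-3}E'\ge -2C$; integrating from $t$ to $\tau$ gives $E^{-2}(t)\le E^{-2}(\tau)+2C(\tau-t)$, and letting $\tau\nearrow T$ along a sequence realizing $\limsup E=+\infty$ (so that $E^{-2}(\tau)\to0$) yields $E^{-2}(t)\le 2C(T-t)$, that is $\int_{\Sigma_t}\kappa^2\,ds\ge C'/\sqrt{T-t}$, which is \eqref{vrachlar_oyligi}.
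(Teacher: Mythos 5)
Before anything else, you should know that the paper itself contains no proof of this theorem: it is stated as a recollection of known results for the Euclidean network flow, with the argument residing in the cited literature (\cite{MNT:2004}, \cite{MNPS:2016}). So your attempt can only be measured against that standard argument, and your proposal does reproduce its skeleton correctly: the evolution identity for $E(t)=\int_{\Sigma_t}\kappa^2\,ds$, Gagliardo--Nirenberg interpolation on each curve (which is indeed exactly where the lower bound on the lengths enters), the differential inequality $E'\le CE^3$, the ODE comparison giving $E(t)\ge C'/\sqrt{T-t}$, and the bootstrap-plus-restart contradiction establishing the dichotomy. The observation that the Dirichlet pinning kills the boundary terms on $\p\Omega$ (zero velocity forces both $\kappa=0$ and the tangential speed $\lambda=0$ there) is also right.

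The genuine gap is precisely the step you flag and then defer: the junction part of $\cR(t)$. It does not cancel, and it is not absorbed merely by ``differentiating the concurrency and Herring conditions''. Note first that your evolution equations are incomplete for networks: the flow necessarily carries a tangential velocity $\lambda$, so $\p_t\kappa=\p_s^2\kappa+\lambda\,\p_s\kappa+\kappa^3$ and $\p_t(ds)=(\p_s\lambda-\kappa^2)\,ds$; the $\lambda$-terms combine into the exact derivative $\p_s(\lambda\kappa^2)$, so each triple junction contributes (up to orientation signs) $\sum_{i=1}^3(2\kappa_i\,\p_s\kappa_i+\lambda_i\kappa_i^2)$. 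What differentiating the concurrency and the $120^\circ$ condition actually yields is: (i) writing $v$ for the common junction velocity, $\kappa_i=v\cdot\nu_i$ and $\lambda_i=v\cdot\tau_i$, whence $\sum_i\kappa_i=0$ and $\sum_i\lambda_i=0$ because $\sum_i\nu_i=\sum_i\tau_i=0$ at a $120^\circ$ junction; and (ii) the quantity $\p_s\kappa_i+\lambda_i\kappa_i$ takes a common value $\mu$ over the three concurring curves. Substituting, the junction term equals $2\mu\sum_i\kappa_i-\sum_i\lambda_i\kappa_i^2=-\sum_i\lambda_i\kappa_i^2$, a genuinely nonzero cubic remainder. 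Since each $\lambda_i$ is a linear combination of the $\kappa_j$'s at the junction, this remainder is of size $\|\kappa\|_{L^\infty}^3$ and must be estimated by trace/interpolation inequalities in terms of $\|\p_s\kappa\|_{L^2}$ and $\|\kappa\|_{L^2}$ (using once more the length lower bound) and then absorbed into $-2\int(\p_s\kappa)^2\,ds+CE^3$; this is the technical core of the proof in \cite{MNT:2004} and it is absent from your proposal. A smaller issue: your integration of $E'\le CE^3$ is legitimate only on time intervals where $E\ge1$; to obtain \eqref{vrachlar_oyligi} for all $t<T$ you should run the comparison with $1+E$ (or treat separately the times where $E<1$, using positivity of $E$ and compactness away from $T$ to adjust the constant).
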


Note that under the uniform lower bound on the length,  \eqref{vrachlar_oyligi} implies 
\begin{equation}\label{saldoadad}
\max\limits_{\Sigma_t}\,\kappa^2 \,\ge \, \frac{C'}{\sqrt{T-t}}\qquad\text{for all $t<T,$} 
\end{equation}
where $C'$ depends also on the lengths of the curves, which is slightly weaker than in the blow up case of closed curves $\{\gamma_t\}_{t\in[0,T)}$, which reads as
$$
\max\limits_{\gamma_t}\,\kappa^2 \,\ge \, \frac{C}{T-t}\qquad\text{for all $t<T.$}
$$
However, the latter estimate for networks is not known even for simple triods (three smooth curves with a single triple junction).  

The next question is the blow-up behaviour of the rescaled networks near the maximal time. 
As in the closed curves setting, one can establish Huisken's monotonicity formula for networks \cite{INS:2014,MNPS:2016} and then, using parabolic rescaling, one approaches some limiting network as $t\nearrow T$. A complete  classification of these limiting networks is a hard problem, because they could  be not regular; for example, we may loose the $120^\circ$ condition (collapse of two triple junctions), two curves of the network may collapse (higher-multiplicity), or even some phase  may collapse to a point or segment. Of course, one of the possibilities are self-shrinking networks; for their classification we refer to \cite{BHM:2017,BHM:2017_2,ChG:2007,MNPS:2016}.

\subsection{Long time behaviour of the anisotropic flow}

We recall from \cite{KNP:2021} that, as in the Euclidean curvature flow of networks, at the maximal time in a triod with Dirichlet boundary conditions either some of the curves disappear or the  curvature of some curve blows up and also near the maximal time the $L^2$-norm  of the curvature satisfies \eqref{saldoadad} 
provided that the length of the curves is uniformly bounded away from zero.

However, to our knowledge, not much is known on theta-shaped networks near the maximal existence time even in the case of a single noneuclidean anisotropy: in this case we cannot straightforwardly repeat/adapt the arguments of \cite{KNP:2021} because (as in the isotropic case) we could  have not only singularities related to the blow-up of the curvature or disappearance of a curve, but also collapse of triple points or region disappearance.
Moreover, the problem of existence of homothetically shrinking theta-shaped networks seems open in the anisotropic case. Recall that in the isotropic case such a homothetic network does not exist  \cite{BHM:2018}.

\section{Crystalline curvature flow of networks}

The classical definition of curvature in the smooth case breaks down if we lack the smoothness of the anisotropy, for instance in with crystalline case. As in the two-phase case \cite{Bellettini:2004,BCChN:2005,BNP:1999,GP:2016,GP:2018,GP:2022,Taylor:1978,Taylor:1991,Taylor:1992,Taylor:1993}, the crystalline curvature becomes nonlocal and its definition requires a special class of networks, admitting a Cahn-Hoffman vector field.

In this section we extend the definition of smooth anisotropic curvature flow to the polycrystalline
case, generalizing  \cite{BCN:2006}. Unless otherwise stated, in what follows we only consider even crystalline anisotropies. 

For simplicity, we assume that any curve we consider is polygonal, consisting of finitely many (at least one) segments and at most one half-line, having fixed its unit normal (via parametrization). 

\begin{definition}\label{def:some_notions}
$\,$
\begin{itemize}
\item[(a)] {\bf Distance vector between two parallel lines and segments/half-lines.} Let $L_1$ and $L_2$ be two parallel lines. A vector $H$ is called a \emph{distance vector of $L_2$ from $L_1$} if $|H|  = \dist(L_1,L_2)$ and $x_0 + H\in L_2$ for any $x_0\in L_1$. In other words, $L_2 = L_1 + H.$ 
Similarly, given two parallel segments/half-lines $S$ and $T$ a vector $H$ is a  \emph{distance vector of $S$ from $T$} if $H$ is the distance vector of the line containing $T$ from that of $S.$ We write 
$$
H(S,T)
$$
to denote the distance vector of a segment/half-line $S$ from a segment/half-line $T.$

\item[(b)] {\bf Parallel networks.} Let $\Sigma=\bigcup_{i=1}^n\Sigma_i$ be a polygonal network such that each curve $\Sigma_i$ consists of $m_i\ge0$ segments $S_1^i,\ldots, S_{m_i}^i$ (in the increasing order of parametrization\footnote{i.e. each $S_j^i$ starts from the point where $S_{j-1}^i$ ends.} of $\Sigma_i$) and $l_i\in\{0,1\}$ half-lines $L_i$, $m_i+l_i\ge1$. We say that a network $\bar \Sigma$ is a \emph{parallel to $\Sigma$} provided that:
\begin{itemize}
\item it consists of $n$ embedded curves $\bar \Sigma_1,\ldots,\bar \Sigma_n;$

\item for each $i$ the curve $\bar \Sigma_i$ consists of $m_i$ segments $\bar S_1^i,\ldots, \bar S_{m_i}^i$ (in the same order as in $\bar\Sigma_i$) and $l_i$ half-lines  $\bar L_i;$

\item for each $i\in\{1,\ldots,n\}$ the segments $S_j^i$ and $\bar S_j^i$  are parallel for all $j$  and $\bar L_i$ and $L_i$ lie on the same line;

\item if $q$ is a junction of $\Sigma_{i_1},\ldots, \Sigma_{i_k}$ for some $k\ge3,$ then $\bar \Sigma_{i_1},\ldots, \bar\Sigma_{i_k}$ form a junction in the same order as $\{\Sigma_{i_j}\}.$
\end{itemize}

\item[(c)] {\bf Distance between parallel networks.} Let $\Sigma$ and $\bar \Sigma$ be parallel networks. We set
$$
d(\Sigma,\bar\Sigma): = \max\limits_{i,j} |H(S_j^i, \bar S_j^i)|. 
$$
\end{itemize}

\end{definition}

\begin{figure}[htp!]
\begin{center}
\includegraphics[width=0.8\textwidth]{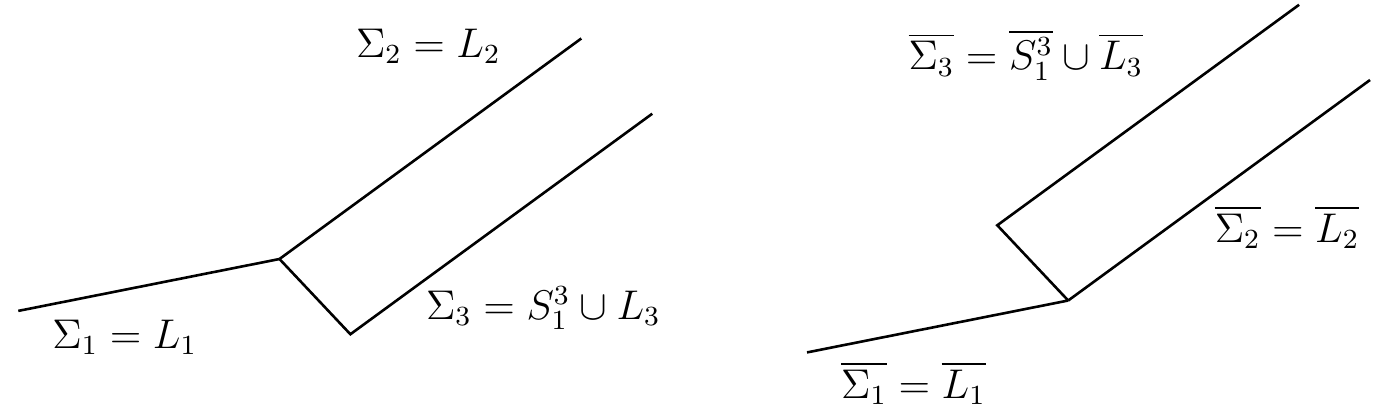}
\caption{\small Non-parallel networks satisfying the first three assumptions of Definition \ref{def:some_notions}(b).}
\label{fig:nonparallel_netwrok}
\end{center}
\end{figure}

\begin{remark}\label{rem:notions_property}
$\,$ 
\begin{itemize}
\item[(a)] If $\nu_S=\nu_T,$ then $H(S,T) = -H(T,S).$ 

\item[(b)] Two parallel networks have the same structure and only the length of segments and/or endpoints of half-lines may differ. The condition on junctions in Definition \ref{def:some_notions} prevents the situations drawn in Figure \ref{fig:nonparallel_netwrok}. 

\item[(c)] Given a network $\Sigma$  and a sequence $\{\Sigma(k)\}$ of networks parallel to $\Sigma$ the following assertions are equivalent:

\begin{itemize}
 \item[(1)] $d(\Sigma,\Sigma(k)) \to 0;$
 
 \item[(2)] each segment $S_j^i(k)$ of $\Sigma(k)$ converges to the corresponding segment $S_j^i$ of $\Sigma$ in the Kuratowski sense;
 
 \item[(3)] $\cH^1(S_j^i(k)) \to \cH^1(S_j^i)$ for all $i$ and $j.$
\end{itemize}

\end{itemize}

\end{remark}

Given parallel networks $\Sigma$  and $\bar\Sigma,$ the distance vectors of the segments of $\bar \Sigma$ from those of $\Sigma$ are uniquely defined.

\begin{proposition}\label{prop:height_vs_length}

Let $\Phi=\{\phi_i\}_{i=1}^n$ be crystalline anisotropies, $\Sigma$ and $\bar\Sigma$ be two parallel polygonal networks and let $J$ and $\bar J$ be two corresponding segments of $\Sigma$  and $\bar\Sigma.$ Let us write 
$$
h_T: = H(\bar T, T) \cdot \nu_T,
$$ 
where $T$  (resp. $\bar T$) is a segment/half-line in $\Sigma$ (resp. $\bar \Sigma$).
\begin{itemize}
\item[\rm(a)] Let $J$ do not end at a triple junction and let $S',S''$ be segments/half-lines of $\Sigma$ which end at the  endpoints of $J.$ Then 
$$
\cH^1(\bar J) = \cH^1(J) + a |h_{J}| + b |h_{S'}| + c |h_{S''}|,
$$
where $a,b,c$ are real numbers depending only on the angles between $S',S$ and $S,S''.$  

\item[\rm(b)] Let $J=[AB]$ and $B$ be a triple junction of $J$ and two other segments/half-lines $S'$ and $S''.$

\begin{itemize}
\item[\rm(b1)] Let a segment/half-line $T$ of $\Sigma$ end at $A.$ Then 
\begin{equation}\label{one_tripod_length}
\cH^1(\bar J) = \cH^1(J) + ah_T + b h_J + c h_{S'} + dh_{S''}, 
\end{equation} 
where $a,b,c,d$ are real numbers, $a$ depends only on the angle between $\nu_T$ and $\nu_S,$ and $b,c,d$ depend only on the angles between $J,$ $S'$, $S''.$

\item[\rm(b2)] Let $A$ be another triple junction of $J$ and segments/half-lines $T'$ and $T''.$  Then 
\begin{equation}\label{two_tripod_length}
\cH^1(\bar J) = \cH^1(J) + ah_{T'} + b h_{T''} + ch_J + dh_{S'} + eh_{S''}, 
\end{equation} 
\end{itemize}
where $a,b,c,d,e$ are constants depending only on the angles between $T',T'',J,$ and $J,S',S''.$ 

\end{itemize}
 
\end{proposition}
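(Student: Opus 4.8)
The plan is to reduce each length formula to a computation of how the two endpoints of $J$ are displaced in passing from $\Sigma$ to the parallel network $\bar\Sigma$, and then to resolve each endpoint displacement by solving a small linear system built from the normals of the segments incident there. First I would record the basic reduction. Since $\bar J$ is parallel to $J$, the two segments share the unit tangent $\tau_J$; writing $J=[AB]$, $\bar J=[\bar A\bar B]$ and setting $\delta_A:=\bar A-A$, $\delta_B:=\bar B-B$, one has $\cH^1(J)=(B-A)\cdot\tau_J$ and $\cH^1(\bar J)=(\bar B-\bar A)\cdot\tau_J$, so that
\begin{equation*}
\cH^1(\bar J)=\cH^1(J)+(\delta_B-\delta_A)\cdot\tau_J .
\end{equation*}
Everything is thereby reduced to expressing $\delta_A\cdot\tau_J$ and $\delta_B\cdot\tau_J$ through the normal heights $h_T$ of the segments meeting at $A$ and at $B$.

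Next I would treat an ordinary (two-valent) endpoint, say $A$, at which $J$ meets a single segment/half-line $K$. The point $A$ is the intersection of the lines carrying $J$ and $K$, and $\bar A$ is the intersection of the parallel-translated lines. Since translating the line of $J$ (resp. $K$) shifts its signed distance precisely by $h_J$ (resp. $h_K$), the displacement $\delta_A$ solves $\delta_A\cdot\nu_J=h_J$, $\delta_A\cdot\nu_K=h_K$. Writing $\delta_A=(\delta_A\cdot\tau_J)\tau_J+(\delta_A\cdot\nu_J)\nu_J$ and using $\nu_J\cdot\nu_K=\cos\theta$, $\tau_J\cdot\nu_K=\sin\theta$ with $\theta$ the angle between $J$ and $K$, I obtain
\begin{equation*}
\delta_A\cdot\tau_J=\frac{h_K-(\cos\theta)\,h_J}{\sin\theta},
\end{equation*}
a linear combination of $h_J$ and $h_K$ with coefficients depending only on the angle between the two segments; nondegeneracy ($\sin\theta\ne0$) holds because segments meeting at a genuine corner are never parallel. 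Applying this at both endpoints, with $K=S'$ at one end and $K=S''$ at the other, yields part (a), the absolute values being a matter of the sign convention adopted for $\nu_{S'},\nu_{S''}$ at ordinary corners, where no triple junction fixes a global orientation.

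For the triple-junction cases I would argue as follows. At a triple junction $B$ of $J,S',S''$ the three translated lines of $\bar\Sigma$ must again be concurrent at $\bar B$; by the very definition of a parallel network this concurrency holds, so the three equations $\delta_B\cdot\nu_J=h_J$, $\delta_B\cdot\nu_{S'}=h_{S'}$, $\delta_B\cdot\nu_{S''}=h_{S''}$ for the two unknowns $\delta_B$ are compatible, forcing exactly one linear relation among $h_J,h_{S'},h_{S''}$. Hence $\delta_B$ is determined by any two of these equations, and projecting onto $\tau_J$ as before expresses $\delta_B\cdot\tau_J$ as an angle-weighted combination of the incident heights, the compatibility relation allowing this to be presented in terms of all of $h_J,h_{S'},h_{S''}$. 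Combining the contribution of $B$ with that of the opposite endpoint — a two-valent vertex carrying $T$ in case (b1), or a second triple junction carrying $T',T''$ in case (b2) — and collecting terms gives \eqref{one_tripod_length} and \eqref{two_tripod_length}, every coefficient being a ratio of sines and cosines of the relevant angles.

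The main obstacle is the triple-junction analysis: one must verify that passing to $\bar\Sigma$ preserves concurrency of the three lines and identify the resulting compatibility relation, which is exactly what permits writing $\delta_B\cdot\tau_J$ through all three incident heights with coefficients depending only on angles. The rest is careful bookkeeping of signs — in particular the orientation convention distinguishing the absolute values in (a) from the signed heights in (b) — together with the verification that all the relevant angles are nonzero, so that each $2\times2$ system is invertible.
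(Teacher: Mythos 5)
Your proof is correct, but it takes a genuinely different route from the paper's. The paper argues by case-by-case planar trigonometry: for (a) it enumerates the eight possible relative configurations of $\bar J$ and $J$ according to the signs of $h_J,h_{S'},h_{S''}$, computes one representative case explicitly from a figure (lengths of auxiliary segments via $\sin$ and $\cot$ of the corner angles), and leaves the remaining cases as analogous; for (b1) it notes there are more than $27$ configurations and again treats one, and (b2) follows by applying (b1) at each triple junction. Your argument instead reduces everything to the identity $\cH^1(\bar J)-\cH^1(J)=(\delta_B-\delta_A)\cdot\tau_J$ and solves for each endpoint displacement from the $2\times 2$ system $\delta\cdot\nu_J=h_J$, $\delta\cdot\nu_K=h_K$. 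This buys three things: no case analysis at all, since signed heights and signed displacements absorb every sign configuration; a uniform signed formula which makes it clear why the coefficients depend only on the angles (note that with the absolute values appearing in the paper's statement of (a), the coefficients actually flip sign from one configuration to another, so the signed version you prove is the cleaner and in fact the more useful statement -- it is the signed form that is invoked in (b1)--(b2) and later in the paper); and it isolates the concurrency/compatibility relation among the three heights at a triple junction, which the paper only derives separately later, in Step 2 of the proof of Theorem \ref{teo:short_time_ccflow}, as identity \eqref{ndezf7fbe}. One small point deserves care in your triple-junction step: you assert that $\delta_B$ is determined by \emph{any} two of the three equations, but if the junction is T-shaped (one of $S',S''$ collinear with $J$) one pair is degenerate and you must choose a transversal pair, the remaining equation then being exactly the compatibility relation; since you already flag the nonvanishing of the relevant sines as the point requiring verification, this is a matter of phrasing rather than a gap.
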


\begin{proof}

(a) In view of the signs of $h_{S'},$ $h_J$ and $h_{S''}$ we have eight possible configurations for the relative location of $\bar J$ and $J$ (Figure \ref{fig:eight_possibility}).

\begin{figure}[htp!]
\begin{center}
\includegraphics[width=0.8\textwidth]{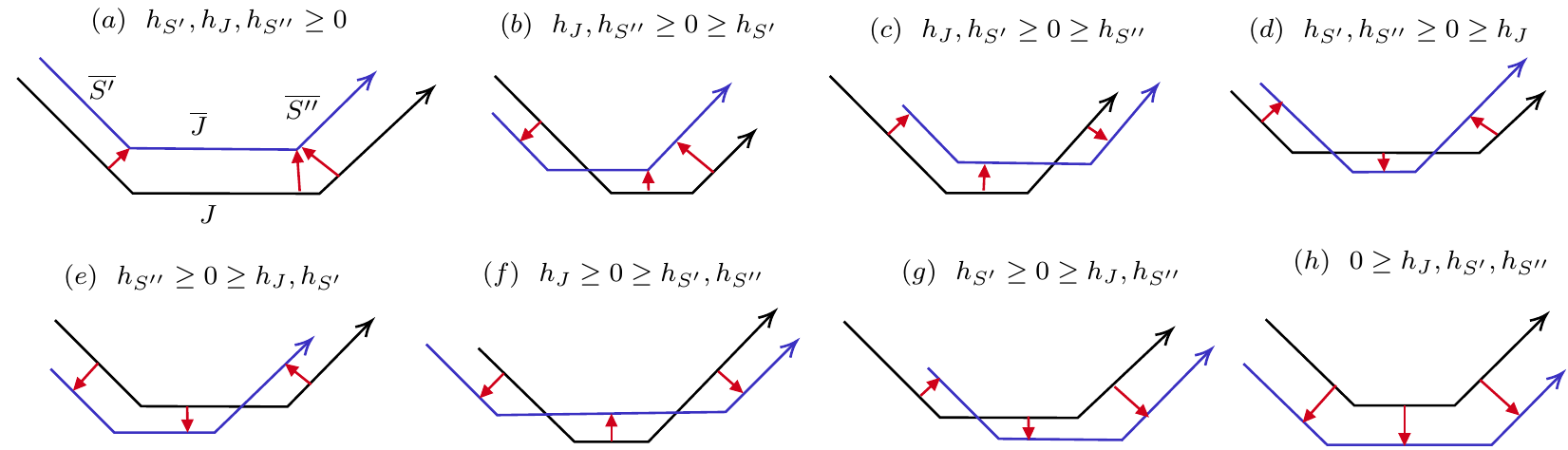}
\caption{\small Eight possible (schematic) configurations of $\bar J$ and $J.$}
\label{fig:eight_possibility}
\end{center}
\end{figure}

For simplicity, let us compute the length of $\bar J$ in case (g) of Figure \ref{fig:eight_possibility}, i.e., $h_{S'}\ge 0\ge h_J,h_{S''}$ see Figure \ref{fig:comp_len}. Clearly, 
$$
\cH^1(\bar J) = \cH^1(J) -\cH^1([AE])| + \cH^1([G\bar B]).
$$

\begin{figure}[htp!]
\begin{center}
\includegraphics[width=0.8\textwidth]{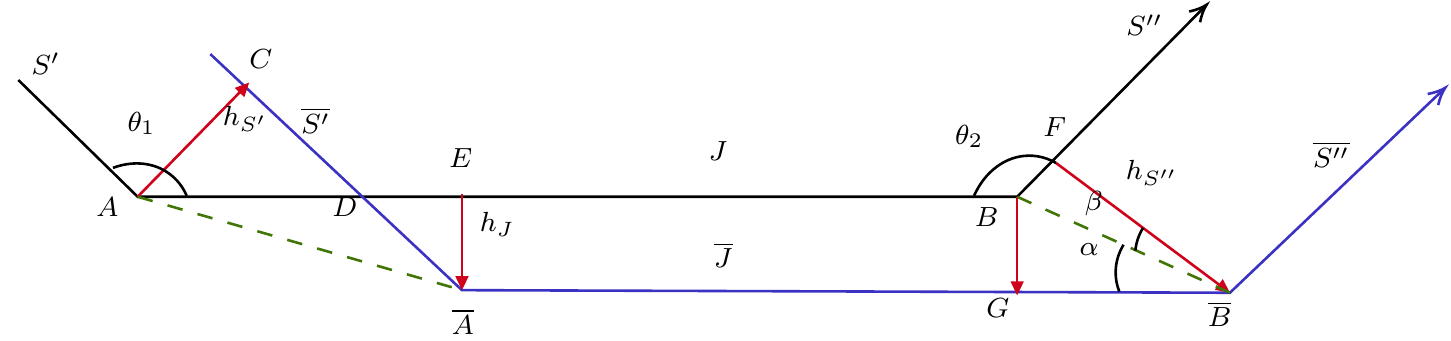}
\caption{\small Computing the length of segment $\bar J$ not ending at a triple junction.}
\label{fig:comp_len}
\end{center}
\end{figure}
Let us denote the angles of $\Sigma$ at $A$ and $B$  by $\theta_1$ and $\theta_2;$ obviously, these two angles are uniquely determined by $\nu_{S'},$ $\nu_J$ and $\nu_J,\nu_{S''},$ respectively. As both the angles $\angle CAD$ and $\angle D\bar AE$ are equal to $\theta_1 - \pi/2,$ 
\begin{multline*}
\cH^1([AE]) = \cH^1([AD]) + \cH^1([DE]) \\ 
= \frac{h_{S'}}{\cos(\theta_1 - \pi/2)} - h_J\tan(\theta_1 - \pi/2) =\frac{h_{S'}}{\sin\theta_1} + h_J\cot\theta_1.
\end{multline*}
Since 
$$
\cH^1([B\bar B]) = \frac{\cH^1([F\bar B])}{\cos \angle F\bar B B} = 
\frac{\cH^1([GB])}{\sin \angle GB\bar B} \quad \text{and}\quad \angle G\bar B F = \theta_2 - \pi/2 = \alpha + \beta,
$$
we find 
$$
-\frac{h_J}{\sin\alpha} = -\frac{h_{S''}}{\cos(\theta_2 - \pi/2 - \alpha)}
$$
or equivalently,
$$
h_J(\sin\theta_2\cot\alpha - \cos\theta_2) = h_{S''}.
$$
This implies
$$
\cH^1([G\bar B]) = - h_J\cot \alpha = -\frac{h_{S''}}{\sin\theta_2} - h_J\cot\theta_2.
$$
Thus, 
$$
\cH^1(\bar J) = \cH^1(J) - \frac{h_{S'}}{\sin\theta_1} + h_J(\cot\theta_1 -\cot\theta_2) - \frac{h_{S''}}{\sin\theta_2}.
$$

(b1) As in case (a) we can consider all possible relative configurations of $(S',J,S'')$ and $(\bar S',\bar J, \bar S'')$  (more than 27 cases). For simplicity, let us assume that we are as in Figure \ref{fig:ending_tripod} and let
$$
x_1:=|h_J| = \cH^1([BD]),\quad
x_2:=|h_{S'}| = \cH^1([\bar BF]),\quad
x_3:=|h_{S''}| = \cH^1([\bar BE]).
$$

\begin{figure}[htp!]
\begin{center}
\includegraphics[width=0.7\textwidth]{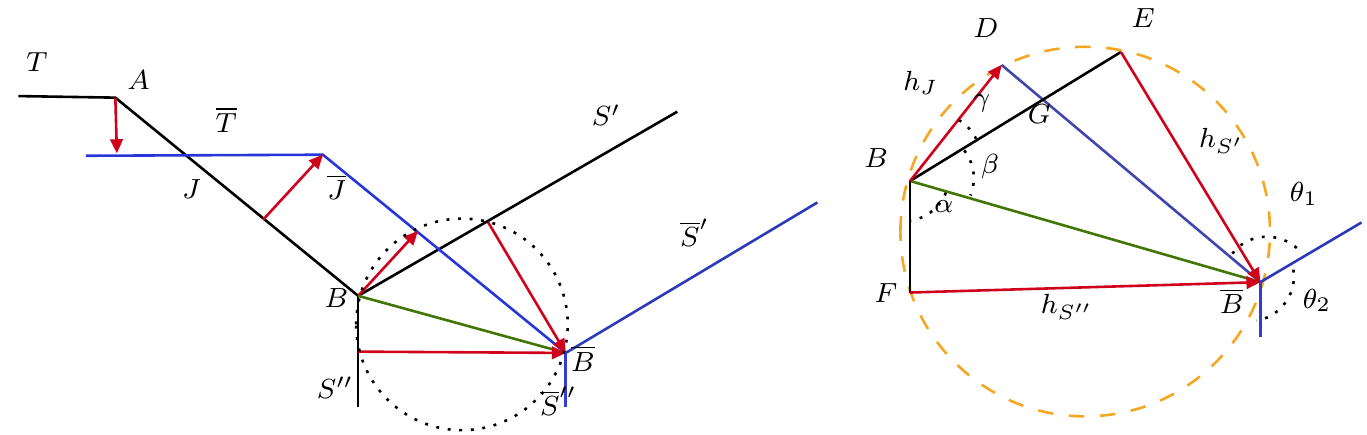}
\caption{\small Computing the length of the segment ending at a triple junction $B$ (case (b1) of Proposition \ref{prop:height_vs_length}).}
\label{fig:ending_tripod}
\end{center}
\end{figure}

We just need to compute $\cH^1([\bar BD]).$
Since $\gamma = \theta_1-\pi/2,$ we have
$$
\cH^1([\bar BD]) = \cH^1([DG]) + \cH^1([G\bar B]) = x_1 \tan\gamma + \frac{x_3}{\cos\gamma} = \frac{x_3}{\sin\theta_1} - x_1\cot\theta_1.
$$
Analogous computations can be done near $A$ and  $\bar A.$ Now observing that $x_1 = h_J,$ $x_2 = -h_{S'}$ and $x_3 = -h_{S''},$ we deduce \eqref{one_tripod_length}. Applying (b1) at each triple point we conclude \eqref{two_tripod_length}.
\end{proof}

\subsection{Cahn-Hoffman vector fields associated to a Lipschitz curve}

Let $\Sigma$ be an embedded Lipschitz curve and let $\phi$ be an anisotropy. We denote by $\Lip_\phi(\Sigma; \R^2)$ the set of all vector fields $N\in \Lip(\Sigma; \R^2)$ such that 
\begin{equation}\label{can_hopmanchuk}
\phi(N)=1\qquad\text{and}\qquad N\cdot \nu_\Sigma = \phi^o(\nu_\Sigma)\quad \text{$\cH^1$-a.e. on $\Sigma.$} 
\end{equation}
Any such vector $N$ is called a \emph{Cahn-Hoffman vector field}. Note that \eqref{can_hopmanchuk} is equivalent to saying $N\in \p \phi^o(\nu_\Sigma)$ $\cH^1$-a.e. on $\Sigma,$ where $\p \phi^o$ is the subdifferential of $\phi^o$. We recall that not every Lipschitz curve admits a Cahn-Hoffman vector  field. However, when it exists, we call the curve $\phi$-regular.

\subsection{$\Phi$-regular networks}\label{subsec:cahn_hoffman_definiton}

Let $\Sigma\subset \R^2$ be a network consisting of polygonal Lipschitz curves\footnote{As in the smooth case, we write $\Sigma_k$ and $\phi_k$ in places of $\Sigma_{ij}$ and $\phi_{ij}.$}  $\{\Sigma_i\}_{i=1}^n$ and let $\Phi = \{\phi_i\}_{i=1}^n$ be a set of anisotropies with $\phi_i$ associated to $\Sigma_i$. 
We denote by $\Lip_\Phi(\Sigma; \R^2)$ the space of all vector fields $N:\Sigma \to  \R^2$ such that $\restriction N|{\Sigma_i}\in \Lip_{\phi_i}(\Sigma_i;  \R^2).$ We set 
\begin{equation}\label{admissible_cahnhopman}
\cN^\Sigma: = \Big\{N\in\Lip_\Phi(\Sigma;  \R^2):\quad\sum_{j=1}^m (\restriction N|{\Sigma_{i_j}}(q))^{\p\Sigma_{i_j}} = 0\quad\text{at the $m$-tuple point $q$} \Big\}, 
\end{equation}
where $\restriction N|{\Sigma_i}^{\p\Sigma_i}$ is defined in \eqref{conormal_tonagentos} and $q$ is endpoint of (exactly) $m$ curves $\Sigma_{i_j}.$ Any element of $\cN^\Sigma$ is called a \emph{Cahn-Hoffman vector field associated to $\Sigma$} and any network admitting at least one Cahn-Hoffman vector field is called a \emph{$\Phi$-regular network}. 

The condition on junctions in \eqref{admissible_cahnhopman} is called \emph{balance condition}\footnote{Which is a version of Herring condition.}.

In what follows we are mainly concerned with networks with triple junctions, and hence, in the balance condition only three vectors appear. Given three anisotropies $\phi_1,\phi_2,\phi_3$, let us call any triplet $(X_1,X_2,X_3)$ such that $X_i\in\p B_{\phi_i}$ and 
$$
X_1 + X_2 + X_3 = 0
$$
\emph{admissible}. We anticipate here that, unlike the elliptic case, the admissible triplets at a triple junction coud be even uncountably many (see Lemma \ref{lem:tripletchuk} below).

\begin{figure}[htp!]
\begin{center}
\includegraphics[width=0.5\textwidth]{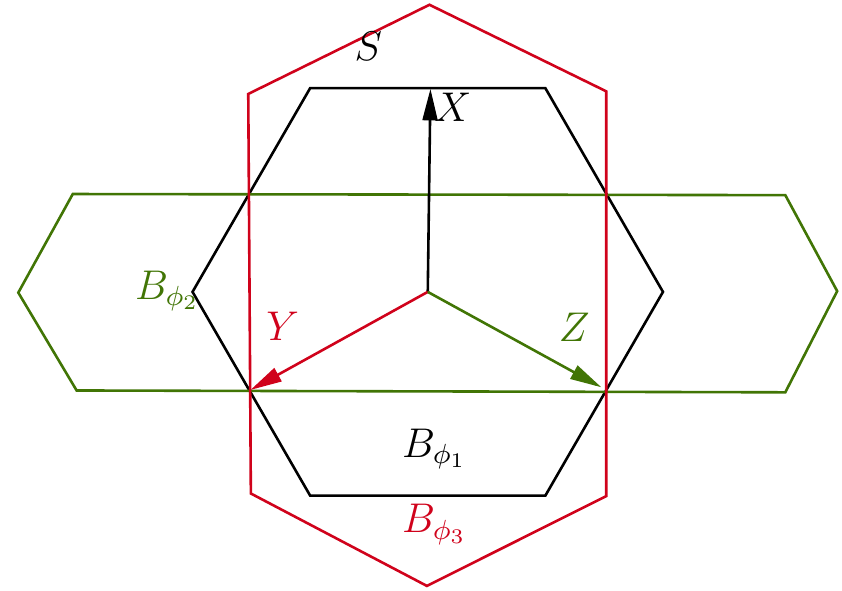}
\caption{\small An admissible triplet for three hexagonal anisotropies $\phi_1,\phi_2,\phi_3.$ The boundaries of hexagons $B_{\phi_2}$ and $B_{\phi_3}$ cross the boundary segments of $B_{\phi_1}$ at their midpoints. Therefore, if $X\in S$ is not the  midpoint, then there are no $Y\in\p B_{\phi_2}$ and $Z\in\p B_{\phi_3}$ such that $X + Y + Z = 0.$}
\label{fig:three_bad_hex}
\end{center}
\end{figure}

In the case of different anisotropies, showing $\cN^\Sigma\ne \emptyset$ is not trivial (see Figure \ref{fig:three_bad_hex}).

\begin{figure}[htp!]
\begin{center}
\includegraphics[width = 0.8\textwidth]{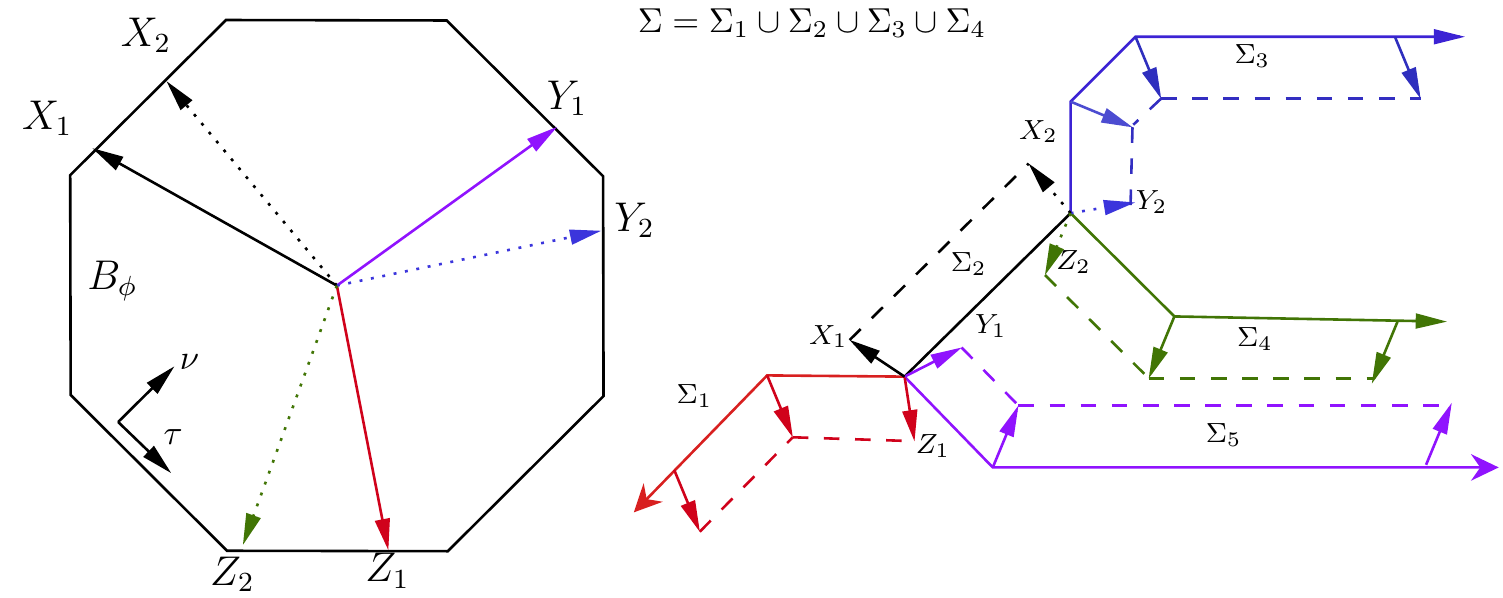} 
\caption{\small A possible definition of Cahn-Hoffman vector field using only its values at the vertices and at triple junctions in the case of a single anisotropy. Note that the Cahn-Hoffman vector is uniquely defined at the vertices of $\Sigma_i$.}
\label{fig:ext_cahn_hoff}
\end{center}
\end{figure}

\begin{remark}\label{rem:cahn_hoffman_field09}
Let $\Phi=(\phi_1,\ldots,\phi_n)$ be crystalline anisotropies and let $\Sigma=\bigcup_{i=1}^n\Sigma_i$ be a $\Phi$-regular polygonal network.

\begin{itemize}
\item If $S=[AB]$ is a segment of $\Sigma_i$ and $N\in\cN^\Sigma,$  then $N(A) - N(B)$ is parallel to the tangent vector $\tau_S$ to $S.$ Moreover, $N(A)$ and $N(B)$ belong  to the same edge of the Wulff shape $B_{\phi_i},$ whose tangent is parallel to $\tau_S.$ In particular, if $S$ does not end at an $m\ge3$-tuple junction, then any Cahn-Hoffman vector field $N$ is uniquely defined at the endpoints $A$ and $B$ of $S$ and hence, it can be extended along $S$ in a Lipschitz way keeping \eqref{can_hopmanchuk} valid.

\item If $\Sigma$ contains a half-line $L$ with endpoint at $A$, then $N$ can be defined along $L$ constantly equal to $N(A).$ Similarly, if $\Sigma$ contains a  ``curved'' part\footnote{A part which is not parallel to some of the sides of the corresponding Wulff shape.} $C$ \cite{BCN:2006}, then $N$ can be taken constant along $C.$

\end{itemize}
\end{remark}

\subsection{Crystalline curvature of a $\Phi$-regular network}

The following result is an  improvement of \cite[Theorem 4.8]{BNR:2003}  and can be shown along the same lines.

\begin{theorem} 
Let $\Phi=(\phi_1,\ldots,\phi_n)$ be crystalline anisotropies associated to a network $\Sigma = (\Sigma_1,\ldots,\Sigma_n).$ If $\Sigma$ is $\Phi$-regular, then the minimum problem 
\begin{equation}\label{eNmin}
\min \left\{ \sum\limits_{i=1}^n \int_{\Sigma_i} 
 \big[\div_\Sigma N\big]^2\, \phi_i^o(\nu)~d\cH^1:\,  N\in \cN^\Sigma \right\}
\end{equation}
admits a unique  solution\footnote{which identifies the direction
along which the length functional  \eqref{network_length} decreases ``most quickly''.} $N_{\min}$.
\end{theorem}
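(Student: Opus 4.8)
The plan is to read \eqref{eNmin} as the minimization of a convex quadratic functional over the convex set $\cN^\Sigma$, obtaining existence by the direct method and uniqueness by strict convexity together with a propagation argument that uses the connectedness of $\Sigma$. \emph{Adapted coordinates.} On each segment $S$ of a curve $\Sigma_i$ the normal $\nu$ is constant, so \eqref{can_hopmanchuk}, i.e. $N\in\p\phi_i^o(\nu)$, forces $N$ to take values in the fixed face $\p\phi_i^o(\nu)$ of $B_{\phi_i}$. When $S$ is parallel to a side of $B_{\phi_i}$ this face is an edge parallel to $\tau_S$, and writing $N=N_0+s\,\tau_S$ formula \eqref{lalala_season3939} gives $\div_\Sigma N=\tfrac{ds}{d\ell}$, the arclength derivative of the scalar coordinate $s$; on a ``curved'' part the face is a single vertex of $B_{\phi_i}$, so $N$ is uniquely determined and $\div_\Sigma N=0$ there (cf. Remark \ref{rem:cahn_hoffman_field09}). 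Hence the functional in \eqref{eNmin} is a finite sum of one–dimensional Dirichlet integrals $\phi_i^o(\nu)\int_S(ds/d\ell)^2\,d\ell$, the scalars $s$ being subject to the box constraint $s\in[s_-,s_+]$ (values stay in the closed edge), to the pinning of $N$ to the appropriate vertex of $B_{\phi_i}$ at every bend of $\Sigma_i$, and to the affine balance conditions of \eqref{admissible_cahnhopman} at the triple junctions.

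\emph{Existence.} Since $\Sigma$ is $\Phi$–regular, $\cN^\Sigma\neq\emptyset$, so the infimum $\mu\ge0$ is finite. Given a minimizing sequence $N_k$, each $N_k$ is bounded (because $\phi_i(N_k)=1$), and the energy bound yields a uniform $L^2$–bound on the segmentwise derivatives $ds_k/d\ell$; thus on each segment the $s_k$ are bounded in $H^1$ and, up to a subsequence, converge weakly in $H^1$ and strongly in $C^0$. The limit $N$ is Lipschitz, its strong $C^0$ convergence keeps the values in the closed faces and preserves both the vertex pinning and the junction balance, so $N\in\cN^\Sigma$; weak $L^2$–lower semicontinuity of $v\mapsto\int v^2\phi_i^o(\nu)\,d\ell$ then gives $\mathcal{E}(N)\le\liminf\mathcal{E}(N_k)=\mu$, where $\mathcal{E}$ denotes the functional in \eqref{eNmin}. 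Hence a minimizer exists.

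\emph{Uniqueness.} The set $\cN^\Sigma$ is convex and $N\mapsto\div_\Sigma N$ is linear, so $\mathcal{E}$ is convex; since $\phi_i^o(\nu)>0$, strict convexity of $t\mapsto t^2$ forces any two minimizers $N_1,N_2$ to satisfy $\div_\Sigma N_1=\div_\Sigma N_2$ $\cH^1$–a.e. Setting $W:=N_1-N_2$, on each segment $W$ is parallel to the relevant edge and $\tfrac{d}{d\ell}(W\cdot\tau_S)=\div_\Sigma W=0$, so $W$ is constant along each segment. It remains to show $W\equiv0$. At every bend of a curve, and on every curved part, $N_1$ and $N_2$ coincide with the same uniquely determined vertex of the Wulff shape (Remark \ref{rem:cahn_hoffman_field09}), whence $W=0$ there and $W\equiv0$ on every segment adjacent to such a point. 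One then propagates $W=0$ across the connected network: at a triple junction incident to an already–vanishing branch, the homogeneous balance $\sum_j (W)^{\p\Sigma_{i_j}}(q)=0$, together with the pairwise non-parallelism of the remaining incident edge directions, forces the other incident constants to vanish; connectedness of $\Sigma$ then gives $W\equiv0$, i.e. $N_1=N_2$.

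\emph{Main difficulty.} The delicate step is this last propagation: one must show that $\div_\Sigma W=0$ together with the linearized constraints (segmentwise–constant $W$, vertex pinning at the bends, and homogeneous balance at the junctions) admits only $W\equiv0$. This is exactly where the crystalline geometry enters decisively, since a facet joining two distinct sides of $B_{\phi_i}$ pins $N$ to two distinct Wulff vertices and thereby forbids a nonzero constant $W$, while at each junction the distinctness of the incident edge directions makes the homogeneous balance injective once one branch is known to vanish. Checking that the minimizing-sequence limit genuinely lands in $\cN^\Sigma$ (closedness of the faces and stability of the balance condition under $C^0$ convergence) is routine but should be recorded.
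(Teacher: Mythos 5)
You should first note what ``the paper's own proof'' is here: the paper gives none. It states only that the result ``is an improvement of \cite[Theorem 4.8]{BNR:2003} and can be shown along the same lines'', the improvement being that the anisotropies $\phi_i$ may differ. So your proposal is a genuine self-contained attempt, and its skeleton (convex quadratic functional over the convex set $\cN^\Sigma$; reduction to the tangential coordinates of $N$ at junctions, pinning at bends, as in Remark \ref{rem:prop_curvature812}(c); strict convexity giving uniqueness of $\div_\Sigma N$) is the natural one. Still, there are two genuine gaps. The smaller one is in existence: a weak $H^1$ limit of a minimizing sequence is only $C^{0,1/2}$ on each segment, not Lipschitz, so it need not belong to $\cN^\Sigma$, which by \eqref{admissible_cahnhopman} is defined inside $\Lip_\Phi(\Sigma;\R^2)$; your assertion ``the limit $N$ is Lipschitz'' is unjustified. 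This is fixable with the reduction you yourself set up: on each segment, with endpoint values prescribed, the linear interpolation minimizes the Dirichlet integral and stays in the (convex) face, so one may minimize \eqref{eNmin} directly over the compact finite-dimensional convex set of junction values, and existence is immediate.

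The serious gap is in the propagation step of uniqueness, which you correctly single out as the main difficulty but do not actually close. First, ``pairwise non-parallelism of the remaining incident edge directions'' is not guaranteed: nothing forbids a triple junction at which two incident interfaces are collinear (say $\Sigma_{12}$ continuing straight through the junction as $\Sigma_{13}$, with $\Sigma_{23}$ transversal); such networks can perfectly well be $\Phi$-regular. If the branch already known to vanish is the transversal one, the homogeneous balance only yields $W_{12}=\pm W_{13}$ on the collinear pair, and your induction stalls; one must then follow the collinear chain through successive junctions and prove it eventually meets a bend (using that every curve contains a segment, that maximal straight pieces are non-collinear, and that an embedded collinear chain cannot close up). Second, and worse, your induction has no base point on subnetworks consisting entirely of single-segment curves, i.e.\ curves with no bends whose both endpoints are triple junctions: if such curves form a subgraph in which every junction has all three incident branches unpinned --- the combinatorics of $K_4$, e.g.\ a triangle of interfaces together with three spokes to an interior junction, which is a legitimate partition network --- then there is no bend to start from and no junction with an ``already-vanishing branch''. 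On such a subnetwork the solutions $W$ of your linearized system (segmentwise-constant, tangent to the segments, homogeneous balance at the junctions) form exactly the space of self-stresses of the associated bar framework, and for $K_4$-type graphs that space is nontrivial. Connectedness of $\Sigma$ therefore does not conclude; the missing idea is to show that a nonzero such $W$ is incompatible with $\Phi$-regularity and with the minimizer's position in the faces (this is where the structure of admissible triplets must enter), and your proposal never engages with it. Note finally that your strict-convexity step does prove uniqueness of $\div_\Sigma N_{\min}$, which suffices to define $\kappa^\Phi$; but the theorem asserts uniqueness of the field $N_{\min}$ itself, and that is precisely what remains unproved.
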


Recall that in \cite[Theorem 4.8]{BNR:2003} the authors assume that all $\phi_i$ are equal.

\begin{definition}
Let $\Sigma$ be a $\Phi$-regular network. We define the $\Phi$-curvature $\kappa^\Phi$ of  $\Sigma$ as 
$$ \kappa^{\Phi}:= \div_\Sigma N_{\min}\quad\text{a.e.\, on\, $\Sigma$.}
$$
Sometimes we denote the $\Phi$-curvature by $\kappa_\Sigma^\Phi$ if we want to emphasize its dependence on $\Sigma.$
\end{definition}

We recall that in the two-phase case the structure of $N_{\min}$ over a planar Lipschitz $\phi$-regular curve $\Sigma$ giving the curvature is known. Namely, if the curve is polygonal, then the values of $N_{\min}$ are uniquely defined as the linear interpolation of its values at the vertices. Moreover, if $\Sigma$ is not polygonal, then $N_{\min}$ is constant on curved parts. 

\begin{remark}\label{rem:prop_curvature812}
Unlike the smooth case, the crystalline curvature of a network is nonlocal. Still:

\begin{itemize}
\item[(a)] If a network $\Sigma$ contains a segment $S=[AB]$ not ending at an $m\ge3$-tuple junction, then $N_{\min}$ is uniquely defined at $A$ and $B$ and linear along $S,$ and hence
\begin{equation}\label{curvature_segments}
\kappa^\Phi = \frac{N_{\min}(B) - N_{\min}(A)}{\cH^1(S)}\cdot \tau_S \quad\text{on}\quad S,
\end{equation}
where $\tau_S$ is the tangent to the segment $S.$

\item[(b)] If $\Sigma$ contains a half-line $L$, then $\kappa^\Phi = 0$ on $L.$ Similarly, if $\Sigma$ contains a ``curved'' part $C,$ then $N_{\min}$ must be constant along $C$ and hence, $\kappa^\Phi = 0$ on $C.$ 

\item[(c)] As we mentioned in Remark \ref{rem:cahn_hoffman_field09}, a Cahn-Hoffman vector field $N$ can be defined only by its values at the triple junctions and at the vertices of segments of $\Sigma$ (see Figure \ref{fig:ext_cahn_hoff}). Then the minimizer of \eqref{eNmin} can be searched only among all possible values of $N$ at the triple junctions. 

\item[(d)] $\kappa^\Phi$ is constant on each segment $S$ and half-line $L$ of $\Sigma$, and we denote their curvatures by $\kappa^\Phi(S)$ and $\kappa^\Phi(L)=0,$ respectively.

\end{itemize}
\end{remark}

These observations imply the following properties of a $\Phi$-regular network. 

\begin{lemma}\label{lem:curvature_parallel_network}
Let $\Phi=(\phi_1,\ldots,\phi_n)$ be crystalline anisotropies and let $\Sigma :=\bigcup_{i=1}^n\Sigma_i$ be a $\Phi$-regular polygonal network. Then: 
\begin{itemize}
\item[\rm (a)] any network $\Sigma'$  parallel to $\Sigma$ is  $\Phi$-regular; 

\item[\rm(b)] let $q_1,\ldots,q_m$ be the multiple junctions of $\Sigma$, i.e., for each $q_l$ there exist  $n_l\ge3$ polygonal curves containing $q_l$ at their boundaries. Let $S_l^{i_1},\ldots,S_l^{i_{n_l'}}$ with $n_l'\le n_l$ be all segments ending at $q_l$ (hence $q_l$ belongs to $n_l-n_l'$ half-lines of $\Sigma$), where $S_l^i\subset \Sigma_i$. Then \eqref{eNmin} is equivalent to the minimum problem
\begin{equation}\label{dajdada}
\min\limits_{N\in\cN^\Sigma}\,\,\sum\limits_{l=1}^m \sum\limits_{j=1}^{n_l'} \phi_{i_j}^o(\nu_{S_l^{i_j}})\,\frac{[(N(B_l^{i_j}) - N(A_l^{i_j})) \cdot \tau_{S_l^{i_j}}]^2}{\cH^1(S_l^{i_j})}, 
\end{equation}
where $S_l^{i_j} = [A_l^{i_j}B_l^{i_j}];$

\item[\rm(c)] let $\{\Sigma(k)\}$ be a sequence of networks parallel to $\Sigma$ (so that by (a) each $\Sigma(k)$ is $\Phi$-regular) 
such that $d(\Sigma(k),\Sigma)\to0.$
Let $N_{\min}$ and $N_{\min}(k)$ be the solutions of \eqref{eNmin} applied with $\Sigma$ and $\Sigma(k)$. Then 
\begin{equation}\label{conve_cahnhoffman}
\restriction {N_{\min}(k)}|{S_i^j(k)}[\zeta(S_i^j(k),S_i^j,x)] \to \restriction {N_{\min}}|{S_i^j}[x]
\end{equation}
uniformly in $x\in S_i^j,$ and 
\begin{equation}\label{conve_curbature}
\kappa_{\Sigma(k)}^\Phi(S_i^j(k)) \to \kappa_{\Sigma}^\Phi(S_i^j)\quad\text{as $k\to+\infty,$}
\end{equation} 
where $S_j^i(k)$ and $S_j^i$ are corresponding parallel segments of $\Sigma(k)$ and $\Sigma,$ and $z(S,T,\cdot):T\to S$ is the linear bijection of $T$ onto $S,$ preserving the orientation.
\end{itemize}

\end{lemma}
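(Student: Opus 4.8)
The plan is to handle the three assertions in turn, exploiting throughout the single structural fact that passing to a parallel network alters neither the unit normals of the segments nor the cyclic order in which the curves meet at the junctions. By \eqref{can_hopmanchuk} and \eqref{admissible_cahnhopman} the Cahn--Hoffman constraints $N|_{\Sigma_i}\in\partial\phi_i^o(\nu_{\Sigma_i})$ and the balance conditions at the junctions depend on the network only through these normals and this order; hence they are shared by all networks parallel to $\Sigma$. Granting this, for (a) I would take a field $N\in\cN^\Sigma$ (available since $\Sigma$ is $\Phi$-regular), read off its values at the vertices and multiple junctions, and transplant them to $\Sigma'$: because corresponding segments of $\Sigma$ and $\Sigma'$ have equal normals, these values still lie in the correct faces $\partial\phi_i^o(\nu)$ and still satisfy the balance condition. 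Extending them as in Remark \ref{rem:cahn_hoffman_field09} --- linearly along each segment, constantly along each half-line --- produces $N'\in\cN^{\Sigma'}$, so $\Sigma'$ is $\Phi$-regular.

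For (b) the key observation is that, once the values of $N$ at the vertices and junctions are fixed, the energy in \eqref{eNmin} decouples over the segments and half-lines. On a half-line the integrand vanishes (Remark \ref{rem:prop_curvature812}(b)); on a segment $S=[AB]$, where $\nu_S$, $\phi_i^o(\nu_S)$ and $\tau_S$ are constant, the contribution $\phi_i^o(\nu_S)\int_S(\tfrac{d}{ds}(N\cdot\tau_S))^2\,ds$ is minimized, for fixed endpoint values of $N\cdot\tau_S$, exactly by the linear interpolant, giving $\phi_i^o(\nu_S)\,[(N(B)-N(A))\cdot\tau_S]^2/\cH^1(S)$ as in \eqref{curvature_segments}. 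By Remark \ref{rem:cahn_hoffman_field09}, every segment not abutting a multiple junction already has $N$ uniquely determined at both endpoints (a corner of the Wulff shape), so its contribution is a constant independent of $N\in\cN^\Sigma$. Discarding these constants leaves precisely the junction-dependent part of the energy, namely the sum \eqref{dajdada} over the segments ending at $q_1,\dots,q_m$; therefore minimizing \eqref{eNmin} and minimizing \eqref{dajdada} yield the same minimizer $N_{\min}$, which is the asserted equivalence.

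For (c) I would use (b) to recast both minimizations --- for $\Sigma$ and for each $\Sigma(k)$ --- as finite-dimensional problems in the admissible triples of junction values. By the shared-structure principle these triples range over one and the same compact convex set $K$ for every $k$, so only the coefficients $1/\cH^1(S_l^{i_j}(k))$ vary. Since $d(\Sigma(k),\Sigma)\to0$ forces $\cH^1(S_l^{i_j}(k))\to\cH^1(S_l^{i_j})>0$ (Remark \ref{rem:notions_property}(c)), the associated functionals converge uniformly on $K$ to the functional of $\Sigma$. As the minimizer of the limit is unique (the theorem preceding the definition of $\kappa^\Phi$), a standard compactness argument --- every subsequence of the optimal junction values has a sub-subsequence converging in $K$ to a minimizer of the limit functional, hence to the unique one --- forces the full sequence of junction values to converge to those of $N_{\min}$. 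Reinserting these into the linear interpolation on each segment yields \eqref{conve_cahnhoffman} uniformly through the reparametrization $\zeta$, and then \eqref{conve_curbature} follows from \eqref{curvature_segments}, since both $(N(B)-N(A))\cdot\tau_S$ and $\cH^1(S)$ converge.

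The main obstacle I anticipate lies in (c): one must verify rigorously that the admissible set $K$ is genuinely $k$-independent --- this is exactly where parallelism, i.e.\ equality of normals and junction orders as encoded in \eqref{admissible_cahnhopman}, is indispensable --- and that the segment lengths stay uniformly bounded below, so that the length-dependent coefficients, and hence the functionals, converge \emph{uniformly} on $K$ rather than merely pointwise. Once these are secured, uniqueness of the limiting minimizer does the rest. Parts (a) and (b) are comparatively routine bookkeeping once the decoupling over segments and the rigidity of $N$ at non-junction vertices have been isolated.
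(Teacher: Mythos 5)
Your proposal is correct and follows essentially the same route as the paper's (much terser) proof: for (a) transplant the junction/vertex values and interpolate linearly along segments, for (b) reduce to the junction values via the decoupling of the energy over segments (with the linear interpolant minimizing for fixed endpoint data), and for (c) combine the finite-dimensional reduction from (b), the convergence of segment lengths from Remark \ref{rem:notions_property}(c), and uniqueness of the minimizer of \eqref{eNmin}. The compactness and uniform-convergence argument you spell out in (c) is precisely what the paper leaves implicit, so your write-up is a faithful, more detailed rendering of the same proof.
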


\begin{proof}
(a) Let $N_{\min}^\Sigma\in \cN^\Sigma$ be the solution of \eqref{eNmin} and let us define $N^{\bar\Sigma}$ as follows: at vertices of $\bar \Sigma$ and also at multiple junctions we define $N^{\bar\Sigma} = N_{\min}^{\Sigma}$ and then we linearly interpolate them along segments/half-lines. Then such vector field belongs to $\cN^{\bar\Sigma}$.

(b) Since all segments not ending at multiple junctions admit  a unique minimizing Cahn-Hoffman field, the minimum problem is reduced to only segments ending at multiple junctions. Since $\kappa^\Phi$ is constant along all segments and $\kappa^\Phi =0$ on half-lines, the minimum problem \eqref{eNmin} is equivalent to \eqref{dajdada}.

(c) Assertion \eqref{conve_cahnhoffman} follows from the definition of Cahn-Hoffman at the vertices of a network, the definition of parallel networks and assertion (3) in Remark \ref{rem:notions_property}(c). Similarly, \ref{conve_curbature} follows from \eqref{conve_cahnhoffman}, \eqref{dajdada} and assertion (3) in Remark \ref{rem:notions_property}(c).
\end{proof}

\begin{remark} 
$\,$
\begin{itemize}
\item[(a)] The sum in \eqref{dajdada} is a function of $N(X)\cdot\tau_S,$ where $X$ is the endpoint of the segment $S$ at the junction. Since $N(X)$ varies in a compact subset of $\p B_\phi,$ the problem \eqref{eNmin} is the minimization of a quadratic function of finitely many variables $x:=N(X)\cdot \tau_S$ (depending on the number of junctions and their multiplicity) and subject to the balance condition in  \eqref{admissible_cahnhopman}. In particular, the unique minimizing field can be found depending only on anisotropies, the location  and length of segments ending at the junctions.

\item[(b)] If $\Sigma$ and $\bar\Sigma$ are parallel $\Phi$-regular networks  having only triple junctions, then Proposition \ref{prop:height_vs_length} and Lemma \ref{lem:curvature_parallel_network} allow to rewrite the minimum problem \eqref{dajdada} depending only on the distance vectors from the segments/half-lines of $\Sigma$.

\item[(c)] Recall that by our convention, no half-line of $\Sigma$ ends at a multiple junction.
\end{itemize}

\end{remark}

\subsection{Polycrystalline curvature flow}

In this section we define polycrystalline curvature flow of networks, generalizing \cite{BCN:2006}. For simplicity, we only consider networks without ``curved'' parts and with only triple junctions.

\begin{definition}[\textbf{Admissible network}] 
Given crystalline anisotropies $\Phi := (\phi_1,\ldots,\phi_n),$ let us call a network $\Sigma:=\bigcup_{i=1}^n \Sigma_i$ \emph{admissible} if 
\begin{itemize}
\item $\Sigma$ is $\Phi$-regular;

\item any multiple junction of  $\Sigma$ is a triple junction;

\item each $\Sigma_i$ consists of $m_i\ge1$ segments $S_1^i,\ldots,S_{m_i}^i$ (counted in the increasing order of parametrization of $\Sigma_i$) and at most one half-line $L_i;$ let $l_i\in\{0,1\}$ be the number of half-lines.
\end{itemize}
\end{definition}

\begin{figure}[htp!]
\begin{center}
\includegraphics[width=0.6\textwidth]{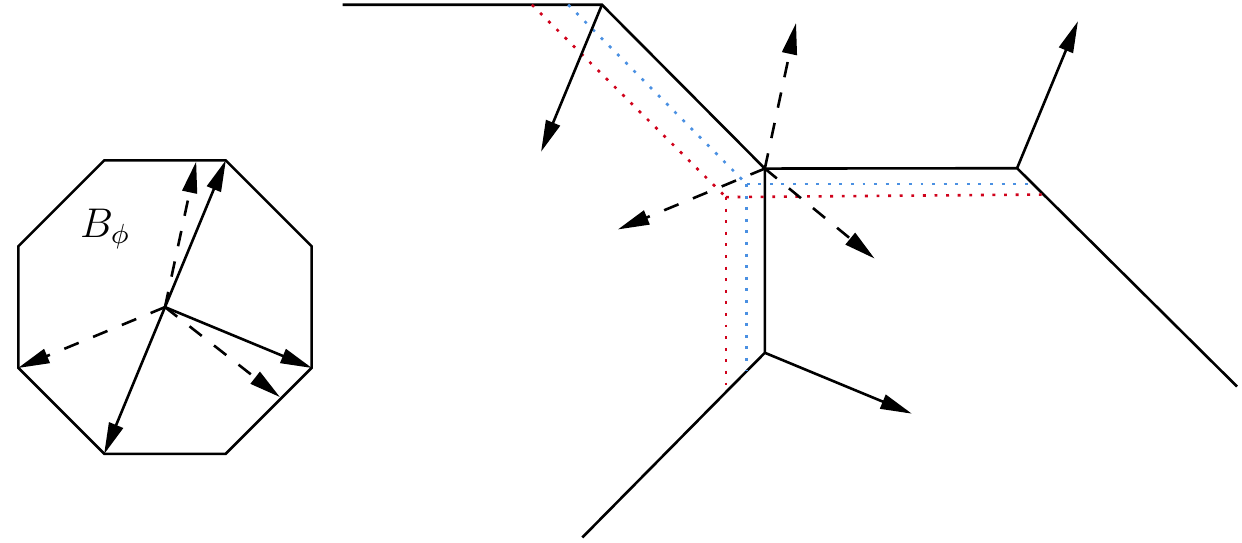} 
\caption{\small Evolution of a network with a single anisotropy.} \label{fig:evolution_network1}
\end{center}
\end{figure}

\begin{definition}[\textbf{Polycrystalline curvature flow of networks}] \label{def:curvature_flow}

Let $\Phi:=(\phi_1,\ldots,\phi_n)$ be crystalline anisotropies and let $\Sigma^0:=\bigcup_{i=1}^n\Sigma_i^0$ be an admissible  network such that each $\Sigma_i^0$ consists of $m_i\ge1$ segments $S_1^{0,i},\ldots,S_{m_i}^{0,i}$ and $l_i\in\{0,1\}$ half-lines $L_i^0$. Let $T>0.$ We call a family $\Sigma(t):=\bigcup_{i=1}^n\Sigma_i(t),$ $t\in[0,T),$ of admissible networks a \emph{polycrystalline curvature flow}  starting from $\Sigma^0$ in $[0,T)$ provided $\Sigma(0) = \Sigma^0$ and:
\begin{itemize}
\item[(a)]  $\Sigma(t)$ is parallel to $\Sigma^0,$ i.e., 

\begin{itemize}
\item  each $\Sigma_i(t)$ consists of $m_i$ segments $S_1^i(t),\ldots,S_{m_i}^i(t)$ and $l_i$ half-lines $L_i(t);$

\item each $S_j^i(t)$ is parallel to $S_j^{0,i}$  and $L_i(t)$ and $L_i^0$ lie on the same line; 
\end{itemize}

\item[(b)]  if 
$$
H_j^i(t):= H(S_j^i(t),S_j^{0,i})
$$ 
is the distance vector of the segment $S_j^i(t)$  from $S_j^{0,i},$  then $H_j^i \in C^1((0,T); \R\nu_j^i)\cap C^0([0,T);  \R\nu_j^i)$ and 
\begin{equation*}
\begin{cases}
\dfrac{d}{d t}\,H_j^i(t) = -\phi_i^o(\nu_j^i)\kappa^{\Phi}(S_j^i(t))\nu_j^i & \text{on $(0,T)$}, \\
H_j^i(0) = 0,
\end{cases}
\end{equation*}
where $\nu_j^i$ is the normal to $S_j^{0,i}.$
\end{itemize}
(see Figure \ref{fig:evolution_network1}).
\end{definition}

Parallelness of the flow $\Sigma(t)$ to $\Sigma ^0$ is important feature of the model: later this will be used in the proof the short-time existence of polycrystalline curvature flow of networks (see Theorem \ref{teo:short_time_ccflow}).
 
\begin{remark}
The segment $S_j^i(t)$ moves in the direction of $\nu_j^i$ if and only if $\kappa^\Phi(S_j^i(t)) < 0.$
\end{remark}

\subsection{Computation of crystalline curvature}

In this section we compute the curvature of some networks in the case of a single crystalline anisotropy, see also \cite{BCN:2006}.

Recall that a triplet of vectors $X,Y,Z\in \p B_\phi$ satisfying 
\begin{equation*}
X + Y + Z = 0 
\end{equation*}
is called \emph{admissible} (see Section \ref{subsec:cahn_hoffman_definiton}). 

\begin{figure}[htp!]
\begin{center}
\includegraphics[width = 0.6\textwidth]{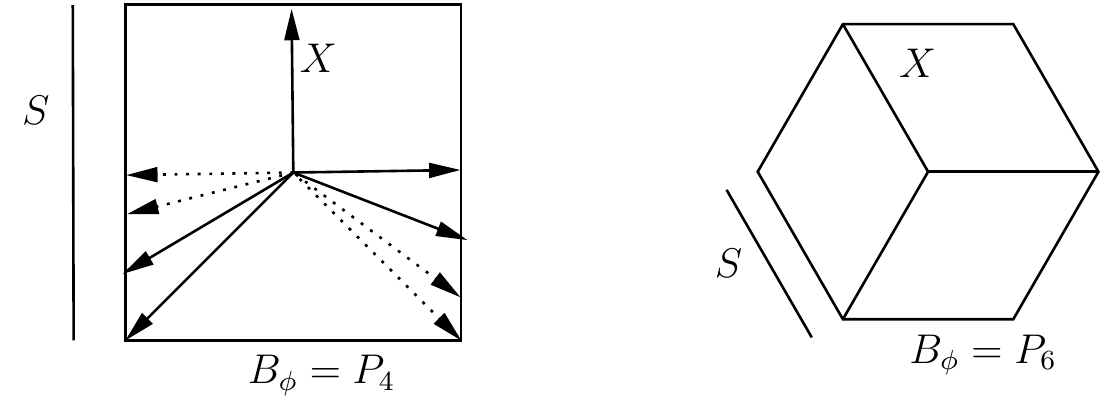}
\caption{\small Admissible triplets for square and hexagonal Wulff-shapes. Note that $P_4$ admits infinitely many pairs $(Y,Z)$ with $X+Y+Z=0.$ However, $P_6$ admits a unique pair for all $X.$}\label{fig:adm_triplets}
\end{center}
\end{figure}

\begin{lemma}[{\cite[Lemma 2.16]{BCN:2006}}]\label{lem:tripletchuk}
Let $\phi$ be any even (not necessarily crystalline) anisotropy in $ \R^2$ and let $X\in\p B_\phi.$ Then there exist two distinct vectors $Y,Z\in\p B_\phi$ such that $(X,Y,Z)$ is admissible. Moreover, if either $B_\phi$ is strictly convex or any segment $S\subset \p B_\phi$ parallel to $X$ satisfies $|S| \le |X|,$ then the pair $Y,Z$ is unique (up to a permutation). Finally, if $\p B_\phi$ contains a segment $S$ satisfying $|S| > |X|,$ then there exist infinitely many unordered pairs $Y,Z\in\p B_\phi$ of disctinct  vectors such that $(X,Y,Z)$ is admissible (see Figure \ref{fig:adm_triplets}).
\end{lemma}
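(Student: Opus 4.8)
The plan is to treat existence and multiplicity separately, reducing everything to elementary planar convex geometry of the centrally symmetric body $K := B_\phi$ (recall $K = -K$ since $\phi$ is even, and $\p K = \{\phi = 1\}$ as $\phi$ is a norm). Throughout, $X\in\p K$ is fixed.

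\textbf{Existence.} I would consider the continuous map $F : \p K \to \R$, $F(Y) := \phi(-X - Y)$. Since $\phi$ is even, both $X$ and $-X$ lie on $\p K$, and one computes $F(-X) = \phi(0) = 0$ while $F(X) = \phi(-2X) = 2\phi(X) = 2$. As $\p K$ is homeomorphic to $\S^1$, hence connected, the intermediate value theorem produces $Y\in\p K$ with $F(Y) = 1$, i.e. $Z := -X - Y \in \p K$; then $X + Y + Z = 0$, so $(X,Y,Z)$ is admissible. Distinctness is automatic: $Y = Z$ would force $Y = -X/2$, but $\phi(-X/2) = 1/2 < 1$, so $-X/2$ is interior to $K$.

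\textbf{Reduction for the multiplicity statements.} After a rotation I may assume $X = (a,0)$ with $a = |X| > 0$. The crux is a bijection between \emph{ordered} admissible pairs $(Y,Z)$ and the set
$$
A := \{(P,Q)\in\p K\times\p K:\ Q - P = X\},
$$
given by $(Y,Z)\mapsto(Z,-Y)$, with inverse $(P,Q)\mapsto(-Q,P)$; here $K=-K$ guarantees $-Y\in\p K$. Since $Y\ne Z$ always holds, the two orderings of an unordered pair are distinct, so the number of \emph{unordered} admissible pairs is exactly $\tfrac12\#A$. Thus it suffices to count pairs of boundary points of $K$ differing by the horizontal vector $X$, i.e. horizontal boundary chords of length $a$ at a common height.

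\textbf{Counting via the chord-length function.} I would introduce $\rho(t) := \cH^1(K\cap\{y=t\})$ on the vertical range $[-T,T]$ of $K$. One checks the standard facts: $\rho$ is concave (convexity of $K$), even ($K=-K$), and $\rho(0) = 2a$ is its maximum, since the horizontal line through the interior point $0$ meets $\p K$ only at $\pm X$; hence $\rho$ is non-increasing on $[0,T]$. At an interior height $t\in(-T,T)$ the line $\{y=t\}$ meets $\p K$ in exactly two points — a horizontal boundary segment can occur only at $t=\pm T$, otherwise $K$ would lie on both sides of the corresponding tangent line — so such a height contributes to $A$ iff $\rho(t)=a$, and then only the pair $(\text{left point},\text{right point})$. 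The decisive monotone–concave lemma is that $\{t\in[0,T]:\rho(t)=a\}$ is a single point: if $\rho\equiv a$ on a nondegenerate interval, concavity together with $\rho(0)=2a>a$ and monotonicity would force $\rho\equiv 2a$ there, a contradiction. The flat pieces at $t=\pm T$ have common length $\ell$, which is precisely the length of the $X$-parallel top/bottom segment $S\subset\p K$. If $\ell<a$ (in particular if $K$ is strictly convex, where $\ell=0$), the endpoints contribute nothing and $\rho$ crosses the level $a$ exactly once in $(0,T)$, giving the symmetric heights $\pm t_0$ and $\#A=2$, hence a unique unordered pair. If $\ell=a$ the interior level set $\{\rho=a\}\cap(0,T)$ is empty (the single point with $\rho=a$ is now $t=T$), while the top and bottom each contribute exactly one pair, again $\#A=2$ and uniqueness. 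Finally, if $\ell>a$, i.e. $\p K$ contains an $X$-parallel segment longer than $|X|$, a length-$a$ subchord can be slid continuously along $S$, producing infinitely many elements of $A$ and thus infinitely many distinct admissible unordered pairs.

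\textbf{Main obstacle.} The existence argument and the sliding construction are routine; the delicate point is pinning down $\#A=2$ in the uniqueness regime. All the difficulty is funneled into the monotone–concave lemma — ruling out that $\rho$ lingers at the intermediate level $a$ over a nondegenerate interval — together with the correct bookkeeping of the degenerate endpoint contributions exactly when $\ell=a$, where the interior crossing migrates to $t=\pm T$. I also expect to need the (easy but essential) remark that horizontal boundary segments occur only at the extreme heights $\pm T$, which is what keeps interior slices two-point and lets the entire count be read off from the single scalar equation $\rho(t)=a$.
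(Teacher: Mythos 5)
Your proposal is correct, but note at the outset that the paper itself contains no proof of this lemma: it is quoted verbatim from \cite[Lemma 2.16]{BCN:2006}, so the comparison can only be against the standard argument of that reference, which analyzes the intersection of $\p B_\phi$ with its translate. Your set $A=\{(P,Q)\in\p K\times\p K:\ Q-P=X\}$ is exactly that intersection in disguise (an element of $A$ is determined by $Q\in\p K\cap(\p K+X)$), so your route is structurally the same idea, but you make the counting completely elementary by reading it off the concave chord-length function $\rho(t)=\cH^1(K\cap\{y=t\})$; this buys a self-contained proof that simultaneously handles existence, uniqueness, and the infinite-multiplicity case, and it correctly reproduces the square/hexagon examples in the paper's Figure \ref{fig:adm_triplets}. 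Three small points should be tightened. First, your ``monotone--concave lemma'' is stated in a garbled form (``would force $\rho\equiv 2a$ there''); the clean argument is: if $\rho(t_1)=\rho(t_2)=a$ with $0<t_1<t_2\le T$, then writing $t_1=\lambda\cdot 0+(1-\lambda)t_2$ with $\lambda\in(0,1)$, concavity gives $\rho(t_1)\ge \lambda\,\rho(0)+(1-\lambda)\,\rho(t_2)=a+\lambda a>a$, a contradiction; this is what pins the level set $\{\rho=a\}\cap[0,T]$ to at most one point. Second, you use the intermediate value theorem up to the endpoint $t=T$, so you need continuity of $\rho$ at $\pm T$; concave functions may jump at the endpoints of their domain, but slice functions of compact convex bodies do not (closedness of $K$ forces $r(T)=\lim_{t\to T^-}r(t)$ and likewise for $l$), and this deserves one line. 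Third, your reading of the final assertion --- that the long segment $S$ must be \emph{parallel to} $X$ --- is indeed the intended one (it is the negation of the hypothesis of the uniqueness case, and without parallelism the assertion is false, e.g.\ for a thin rectangle with $X$ on a short side), so it is worth stating that interpretation explicitly rather than silently. With these repairs the proof is complete.
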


\begin{figure}[htp!]
\begin{center}
\includegraphics[width=0.8\textwidth]{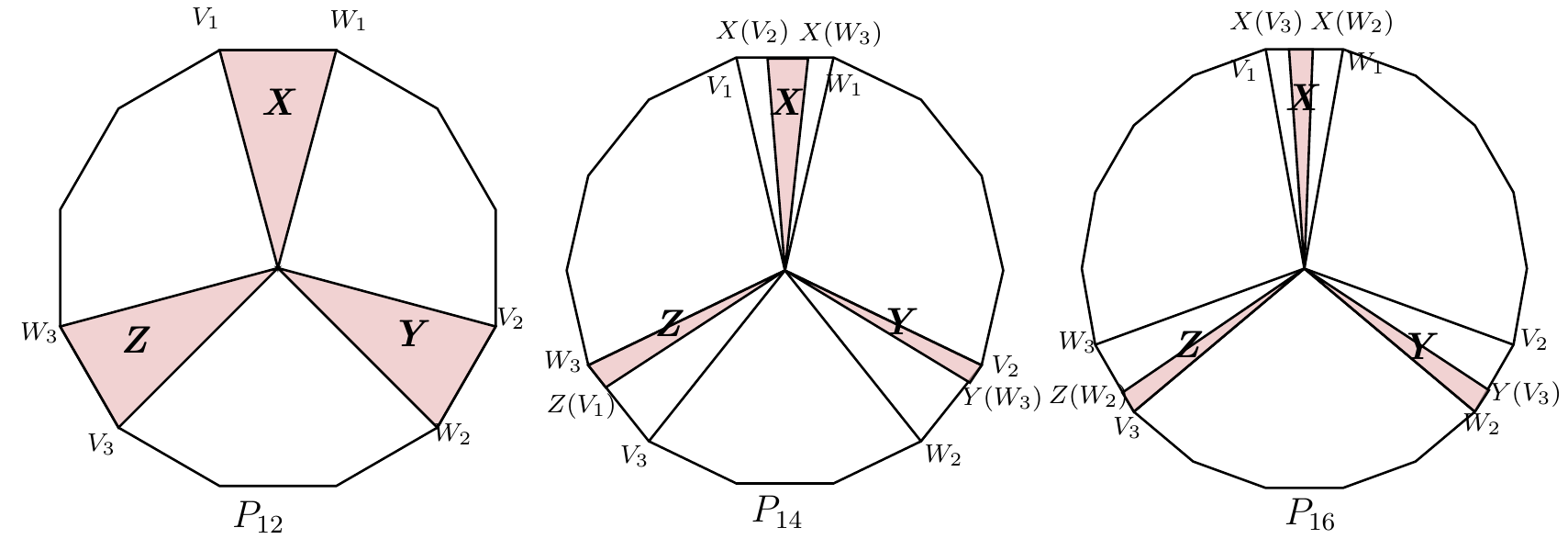}
\caption{\small Wulff shapes $B_\phi = P_{12}$ ($n=6m$), $B_\phi = P_{14}$ ($n=6m-4$) and
$B_\phi = P_{16}$ ($n=6m-2$)
and relative regions of ranging for admissible triplets (filled regions). We observe that the sum of the lengths of three admissible segments in $P_{14}$ and $P_{16}$ is the sidelength of the polygon. Moreover, if any of $X,Y,Z$ belongs to the boundary of its admissible region, then at least one of them falls on a vertex of $B_\phi.$
} 
\label{fig:admissible_triplets}
\end{center}
\end{figure}

Let $\phi$ be a crystalline anisotropy such that $B_\phi$ is a regular polygon with $n\ge6$-vertices (note that $n$ is even) and assume $\phi_i=\phi$ for all $i=1,\ldots,n.$ 
Let us study how admissible triplets look like when one of the vectors $X,Y,Z$ is fixed. By  Lemma \ref{lem:tripletchuk} for any $X$ (resp. $Y$ or $Z$) there exists (up to a permutation) a unique $Y=Y(X)$ and $Z=Z(X)$ (resp. $X(Y)$ and $Z(Y),$ $X(Z)$ and $Y(Z)$) such that $(X,Y,Z)$ is admissible. In view of the symmetry of $B_\phi,$ we can explicitly compute the ranging regions for all admissible triplets (see Figure \ref{fig:admissible_triplets}).

To this aim, let us introduce the following numbers:
\begin{equation}
\theta_n:=
\begin{cases}
\frac{2\pi}{3} & n = 6m,\\
\frac{2\pi}{3}(1 + \frac1n) & n = 6m-4,\\
\frac{2\pi}{3}(1 - \frac1n) & n = 6m-2,
\end{cases}
\quad 
\delta:=\frac{l}{2(1-\cos\theta_n)},\quad 
\bar c = -\frac{1}{2\cos\theta_n},
\label{theta_n_delta_etc}
\end{equation}
\begin{equation}
q_y:=
\begin{cases}
0 & n = 6m,\\
-\bar c\delta & n = 6m - 4,\\
l - \bar c(l-\delta) & n = 6m - 2,
\end{cases}
\qquad\text{and}\qquad 
q_z:=
\begin{cases}
l & n = 6m,\\
\bar c(l-\delta) & n = 6m - 4,\\
l+\bar c\delta & n = 6m - 2,
\end{cases}
\label{qy_qz_defe}
\end{equation}
and the segments 
\begin{equation}\label{seg_ab}
[a,b]:=
\begin{cases}
[0,l] & n = 6m,\\
[\delta, l-\delta] & n=6m -4,\,6m-2.
\end{cases} 
\end{equation}
where $l$ is the sidelength of $B_\phi.$ Then letting 
\begin{equation}\label{def_xyz}
x = |V_1-X|,\quad y = |V_2 - Y|,\quad z = |W_3 - Z| 
\end{equation}
(see Figure \ref{fig:admissible_triplets}), we have 
\begin{equation}
y =  \bar cx + q_y\quad\text{and}\quad z = -\bar cx + q_z 
\label{yx_zxlsasla}
\end{equation}
and in particular, knowing just one among $x,$ $y$ and $z$ we can find the remaining two.

\subsubsection{Crystalline curvature of triods}

Let $\Sigma_1,\Sigma_2,\Sigma_3$ be three polygonal curves  each consisting of one segment $S_i$ and one half-line $L_i$, with a single common vertex, and  let $\Sigma$ be the corresponding network.

We want to compute its crystalline curvature in case $B_\phi$ is a \emph{regular octagon} ($n=8$) of sidelength $l=1$. We parametrize it in such a way that the  $90^\circ$-clockwise rotation of its external unit normal coincide with the tangent to the boundary of $B_\phi$. Then the quantities above are computed as
$$
\theta_8 = \frac{3\pi}{4},\quad \delta = 1 - \frac{1}{\sqrt2},\quad \bar c = \frac{1}{\sqrt2},\quad q_y = - \frac{\sqrt2-1}{2},\quad q_z = \frac12 
$$
and 
$$
[a,b] = \Big[1-\frac{1}{\sqrt2}, \frac{1}{\sqrt2}\Big].
$$
Note that if $x,y,z$ are as in \eqref{def_xyz}, then by \eqref{yx_zxlsasla}
\begin{equation}\label{desc_x_yz222}
y = \frac{x}{\sqrt2} - \frac{\sqrt2-1}{ 2},\qquad z = - \frac{x}{\sqrt2} + \frac12
\end{equation}
so that 
$$
x\in\Big[1-\tfrac{1}{\sqrt2}, \tfrac{1}{\sqrt2}\Big]
\quad\Longrightarrow\quad 
y\in \Big[0,1-\tfrac{1}{\sqrt2}\Big]
\quad\text{and}\quad 
z\in  \Big[0,1-\tfrac{1}{\sqrt2}\Big].
$$
These three segments divide each side of $B_\phi$ into three segments and using Figure \ref{fig:admissible_triplets} the values of admissible triplets $(X,Y,Z)$  are (up to a rotation of $22.5^\circ$) as follows:
\begin{figure}[htp!]
\begin{center}
\includegraphics[width=0.8\textwidth]{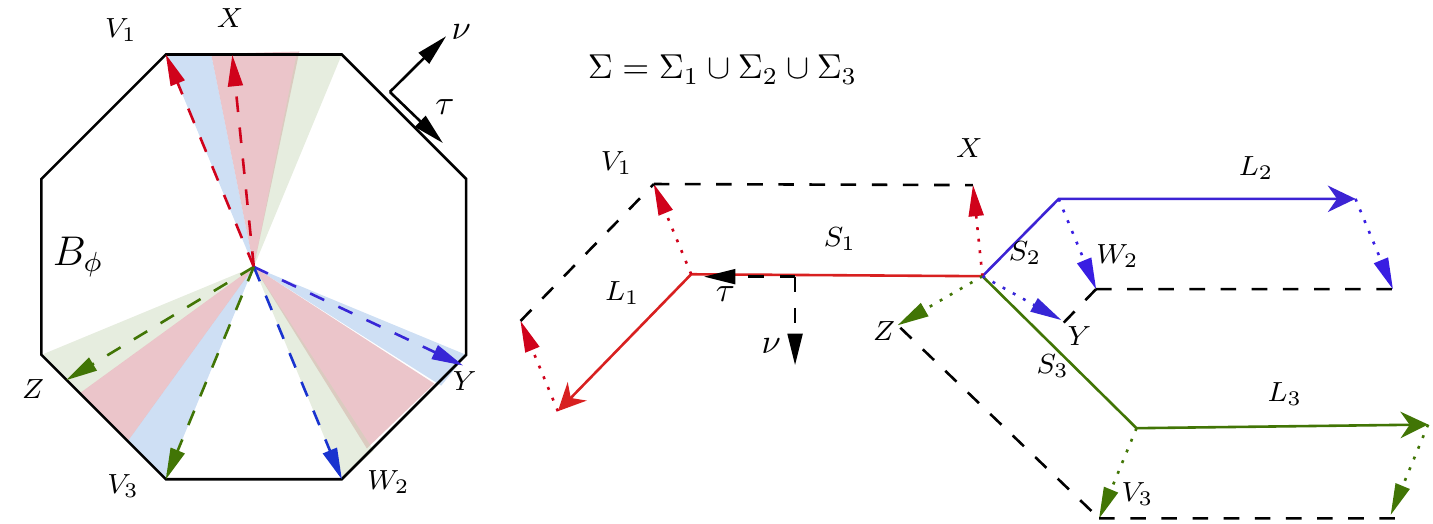} 
\caption{\small A triod and regions for admissible triplets.}
\label{fig:possible_triod}
\end{center}
\end{figure}
For simplicity, assume that our triod is as in Figure \ref{fig:possible_triod}. In this case any Cahn-Hoffman field $N$ is identically equal to $V_1$ on $L_1,$ $W_2$ on $L_2$ and $V_3$ on $L_3.$ According to the figure, in the admissible triplets $(X,Y,Z),$ $X$ must be taken from the ``middle'' region, 
or equivalently,  $x\in [1-\frac{1}{\sqrt2},\frac{1}{\sqrt2}].$ Let us write $a\upuparrows b$ to denote parallel vectors $a$ and $b$ with the same direction. 
Observing  
$(V_1 - X) \upuparrows \tau_{S_1} \upuparrows (-\tau_{B_\phi}(X))$ 
and 
$(W_2 - Y) \upuparrows \tau_{S_2} \upuparrows \tau_{B_\phi}(Y)$
and 
$(V_3 - Z) \upuparrows \tau_{S_3} \upuparrows (-\tau_{B_\phi}(Z))$, from the definition \eqref{def_xyz} of $x,y,z$ we get 
\begin{align*}
\kappa^\Phi(S_1) = & \frac{V_1 - X}{\cH^1(S_1)}\cdot \tau_{S_1} 
= - \frac{x}{\cH^1(S_1)},\\
\kappa^\Phi(S_2) = & \frac{W_2 - Y}{\cH^1(S_2)}\cdot \tau_{S_2}
= \frac{1-y}{\cH^1(S_2)},\\
\kappa^\Phi(S_3) = & \frac{W_3 - Z}{\cH^1(S_3)}\cdot \tau_{S_3}
= - \frac{1-z}{\cH^1(S_3)},
\end{align*}
where in the last two equalities $1$ represents the sidelength of $B_\phi,$ 
and 
$$
\kappa^\Phi(L_i) = 0,\quad i=1,2,3.
$$
Since $\phi^o(\nu_{S_1}) = \phi^o(\nu_{S_3}) = \phi^o(\nu_{S_3}):=\phi_o,$ the functional in \eqref{eNmin} is rewritten as 
\begin{align*}
\int_{\Sigma}[\div_\Sigma N]^2\phi^o(\nu_\Sigma)\,d\cH^1 = &
\sum\limits_{i = 1}^3 \int_{S_i} \phi^o(\nu_{S_i}) [\kappa^\Phi(S_i)]^2\,d\cH^1 \\
= & \phi_o \Big[ \frac{x^2}{\cH^1(S_1)} + \frac{(1-y)^2}{\cH^1(S_2)} + \frac{(1-z)^2}{\cH^1(S_3)}\Big].
\end{align*}
Inserting the representations \eqref{desc_x_yz222} of $y$ and $z$ we get 
\begin{equation}\label{minprob_one_tripod}
\frac{1}{\phi_o}\,\int_{\Sigma}[\div_\Sigma N]^2\phi^o(\nu_\Sigma)\,d\cH^1 =
\alpha x^2 + \beta x + \gamma, 
\end{equation}
where 
\begin{align*}
\alpha:= &  \frac{1}{\cH^1(S_1)} +  \frac{1}{2\cH^1(S_2)} +
 \frac{1}{2\cH^1(S_3)},\\
 \beta:= & \frac{1}{\sqrt2\cH^1(S_3)} - \frac{\sqrt2+1}{\sqrt2\cH^1(S_2)},\\
 \gamma:= & \frac{3+2\sqrt2}{4\cH^1(S_2)} + \frac{1}{4\cH^1(S_3)}.
\end{align*}
Then the minimum problem \eqref{eNmin} reduces to finding
$$
\min\limits_{x\in[1-\frac{1}{\sqrt2},\frac{1}{\sqrt2}]}\,[\alpha x^2 + \beta x + \gamma] 
$$
and the minimizer $x_{\min}$ satisfies 
\begin{equation}\label{xmin_interior9182}
1-\frac{1}{\sqrt2}< x_{\min}  < \frac{1}{\sqrt2} 
\end{equation}
if and only if 
\begin{equation}\label{cond_stabilitysasas}
\frac{\sqrt2-1}{\cH^1(S_1)} + \frac{1}{\sqrt2\cH^1(S_3)} < 
\frac{1}{\cH^1(S_2)} < \frac{\sqrt2}{\cH^1(S_1)} + \frac{\sqrt2}{\cH^1(S_3)}. 
\end{equation}

\begin{remark}
Condition \eqref{xmin_interior9182}  implies that the vector field $N_{\min}$ associated to the network $\Sigma$ given in Figure \ref{fig:possible_triod} at the triple junction belongs to the interior of the admissible region for triplets $(X,Y,Z).$ Thus, any slight modification of $\Sigma$, keeping it $\Phi$-admissible, preserves this ``interiorness'' condition.
We anticipate here that this condition will be used later in the proof of short-times existence of the flow.
\end{remark}

\subsubsection{Crystalline curvature of theta-shaped networks} 
In this subsection we assume that $B_\phi$ is a regular hexagon ($n=6$) of sidelength $l=1$; let $\Sigma$ be a union of two convex hexagons with sides parallel to those of $B_\phi$  and sharing a common side as in Figure \ref{fig:hex_networks}.
\begin{figure}[htp!]
\begin{center}
\includegraphics[width=0.8\textwidth]{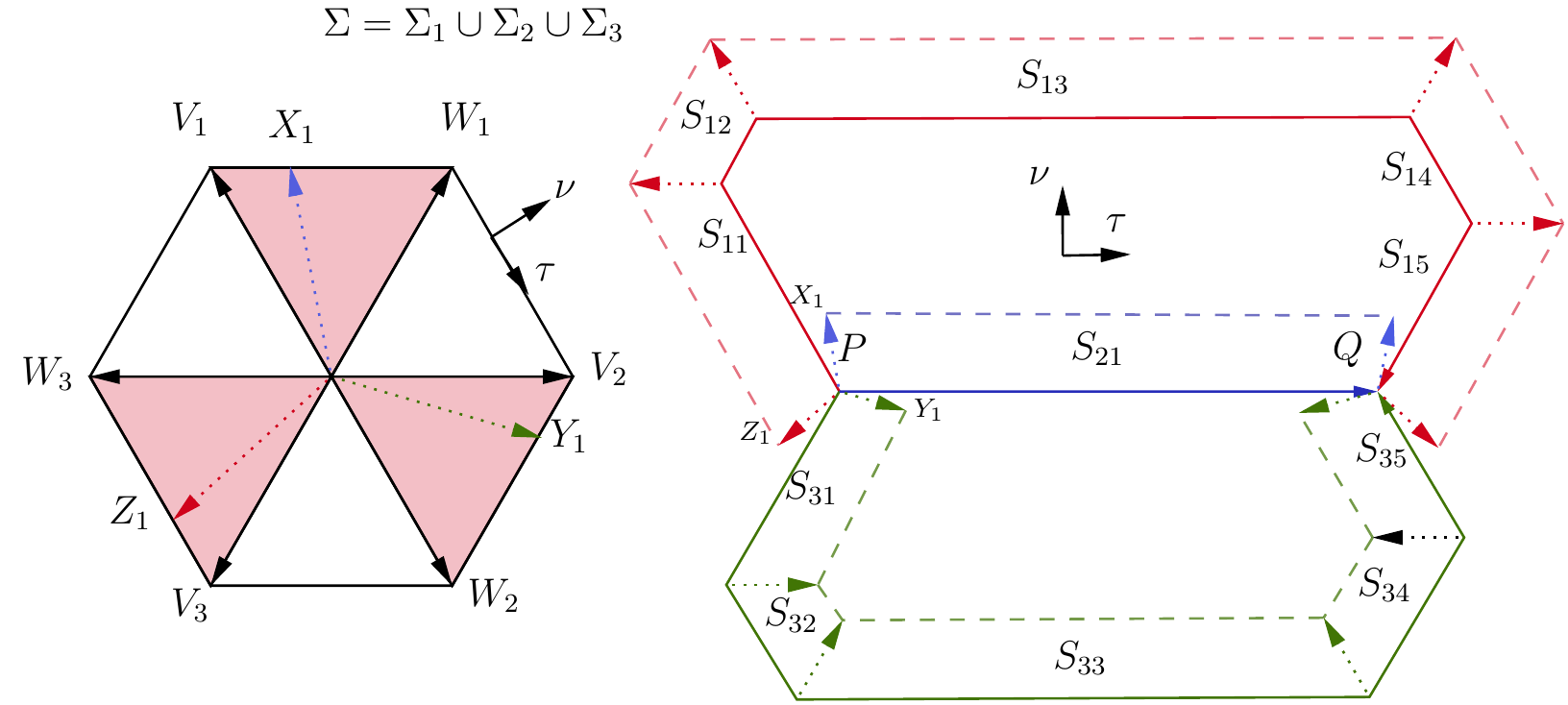}
\caption{ \small Admissible regions for triplets in the hexagon and an admissible $\Theta$-shaped network. Curves $\Sigma_j$ are parametrized from $P$ to $Q$ and $\p B_\phi$ is also parametrized clockwise.}
\label{fig:hex_networks}
\end{center}
\end{figure}
In this case the quantities in \eqref{theta_n_delta_etc}, \eqref{qy_qz_defe} and \eqref{seg_ab} become: $[a,b] = [0,1],$
$$
\theta_6 = \frac{2\pi}{3},\quad \delta = \frac13,\quad \bar c= 1,\quad q_y = 0,\quad q_z = 1
$$
and 
\begin{equation}\label{dnaifeu}
y = x,\quad z = 1 - x. 
\end{equation}
Let $\Sigma_1$ and $\Sigma_3$ be the broken lines consisting of five segments $S_{ij}$ and  $\Sigma_2:=S_{21}$ be a segment (see Figure \ref{fig:hex_networks}). Note that at both triple junctions we have the $120^\circ$-equal angles condition. As we observed above, the crystalline curvatures of $S_{12},S_{13},S_{14}$ and $S_{32},S_{33},S_{34}$ are uniquely defined and equal to 
$$
\kappa^\Phi(S_{ij}) = \frac{1}{\cH^1(S_{ij})},\quad i=1,3,\quad j=2,3,4.
$$

Let $(x_1,y_1,z_1)$ and $(x_2,y_2,z_2)$ be defined as in \eqref{def_xyz} at $P$ and $Q,$ i.e., 
$$
x_i = |V_1 - X_i|,\quad y_i = x_i,\quad z_i = 1-x_i,\quad i=1,2,
$$
where we used \eqref{dnaifeu} in the definitions of $y_i$ and $z_i$.
Then 
\begin{align*}
& \kappa^\Phi(S_{11}) = \frac{W_3 - Z_1}{\cH^1(S_{11})}\cdot \tau_{S_{11}} = \frac{z_1}{\cH^1(S_{11})},\\ 
& \kappa^\Phi(S_{15}) = \frac{Y_2 - V_2}{\cH^1(S_{15})}\cdot \tau_{S_{15}} = \frac{y_2}{\cH^1(S_{15})},\\
& \kappa^\Phi(S_{31}) = \frac{V_2 - Y_1}{\cH^1(S_{31})}\cdot \tau_{S_{31}} = - \frac{y_1}{\cH^1(S_{31})},\\
& \kappa^\Phi(S_{35}) = \frac{Z_2 - W_3}{\cH^1(S_{35})}\cdot \tau_{S_{35}} = - \frac{z_2}{\cH^1(S_{35})},\\
&\kappa^\phi(S_{21}) = \frac{X_2 - X_1}{\cH^1(S_{21})}\cdot \tau_{S_{21}} =  \frac{X_2 - V_1}{\cH^1(S_{21})}\cdot \tau_{S_{21}} -  \frac{X_1 - V_1}{\cH^1(S_{21})}\cdot \tau_{S_{21}}
=
\frac{x_2-x_1}{\cH^1(S_{21})}.
\end{align*}
Since all $\phi^o(\nu_{S_{ij}})$ are equal, denoting their common value by $\phi_o$ we get 
\begin{multline*}
\frac{1}{\phi_o}\int_{\Sigma} [\div_\Sigma N]^2\phi^o(\nu_\Sigma)\,d\cH^1
= \sum\limits_{i\in\{1,3\},\,j \in\{2,3,4\} } \frac{1}{\cH^1(S_{ij})} \\
+ \frac{z_1^2}{\cH^1(S_{11})}
+ \frac{y_2^2}{\cH^1(S_{15})}
+ \frac{y_1^2}{\cH^1(S_{31})}
+ \frac{z_2^2}{\cH^1(S_{35})}
+ \frac{(x_1 - x_2)^2}{\cH^1(S_{21})}.
\end{multline*}
Recalling that $y_i = x_i$ and $z_i = 1- x_i,$ we rewrite the last equality as 
$$
\frac{1}{\phi_o}\int_{\Sigma} [\div_\Sigma N]^2\phi^o(\nu_\Sigma)\,d\cH^1
= \alpha_{11}x_1^2 + 2\alpha_{12}x_1x_2 +  \alpha_{22}x_2^2
+2 \alpha_{1}x_1 + 2\alpha_{2}x_2 +\alpha_0,
$$
where
\begin{align*}
&\alpha_{11} = \frac{1}{\cH^1(S_{11})} + \frac{1}{\cH^1(S_{21})} + \frac{1}{\cH^1(S_{31})},\\
&\alpha_{22} = \frac{1}{\cH^1(S_{15})} + \frac{1}{\cH^1(S_{21})} + \frac{1}{\cH^1(S_{35})},\\
&\alpha_{12} = -\frac{1}{\cH^1(S_{21})},\qquad 
\alpha_{1} = -\frac{1}{\cH^1(S_{11})},\qquad 
\alpha_{2} = -\frac{1}{\cH^1(S_{35})},\\
&\alpha_0 = \sum\limits_{i\in\{1,3\},\,j \in\{1,2,3,4,5\} } \frac{1}{\cH^1(S_{ij})}.
\end{align*}
Thus the minimum problem \eqref{eNmin} reduces to find 
\begin{equation}\label{minprob_two_tripod}
\min\limits_{(x_1,x_2)\in [0,1]^2}\,\Big[\alpha_{11}x_1^2 + 2\alpha_{12}x_1x_2 +  \alpha_{22}x_2^2
+ 2\alpha_{1}x_1 + 2\alpha_{2}x_2 +\alpha_0\Big],
\end{equation} 
whose unique solution is
$$
x_1^{\min}:= \frac{\alpha_{12}\alpha_2 - \alpha_{22}\alpha_1}{\alpha_{11}\alpha_{22} - \alpha_{12}^2},\qquad 
x_2^{\min}:= \frac{\alpha_{12}\alpha_1 - \alpha_{11}\alpha_2}{\alpha_{11}\alpha_{22} - \alpha_{12}^2},
$$
which satisfies $(x_1^{\min},x_2^{\min}) \in (0,1)^2,$ i.e., the values of $N_{\min}$ at both triple junctions belong to the interior of the  admissible regions for triplets (dark regions in Figure \ref{fig:hex_networks}).

\begin{remark}
The minimum problems \eqref{minprob_one_tripod}  and \eqref{minprob_two_tripod} show that, in the case of a single even crystalline  anisotropy, if  $\Sigma$ is an admissible network with $n$-triple junctions, then the minimum problem \eqref{eNmin} is reduced to the minimum problem 
$$
\min\limits_{x_1,\ldots,x_n\in [a,b]} P(x_1,\ldots,x_n),
$$
where 
$$
P(x_1,\ldots,x_n) = \sum\limits_{1\le i \le j \le n} \alpha_{ij}x_ix_j + \sum\limits_{1\le i \le n} \alpha_{i}x_i + \alpha_0
$$
is a quadratic polynomial of $n$-variables and coefficients $\{\alpha_{ij},\alpha_i\}$ depending only on $1/\cH^1(S_k),$ where $S_k$ are segments of $\Sigma$ which end at a triple junction and $\alpha_{ii}>0.$ In other words, the crystalline curvature of a network must ``see'' all triple junctions. This non-locality of crystalline curvature flow makes the problem hard, but at the same time remarkable.
Note that if the minimum $(x_1,\ldots,x_n)$ of $P$ belongs to $(a,b)^n$, then the same holds for all sufficiently small admissible perturbations of $\Sigma.$ Moreover, the minimum is uniquely determined only by the numbers $1/\cH^1(S_k).$
\end{remark}

\subsection{Stable admissible networks}

Figure  \ref{fig:admissible_triplets} and \cite[Definition 2.10]{BCN:2006} encourage  the following definition.

\begin{definition} 
Given crystalline anisotropies $\Phi:=(\phi_1,\ldots,\phi_n),$ a polygonal admissible network $\Sigma=\bigcup_{i=1}^n \Sigma_i$ is called \emph{stable} provided:
\begin{itemize}
\item if $Q$ is a triple junction of curves $\Sigma_{i_1},$ $\Sigma_{i_2}$ and $\Sigma_{i_3},$ then there exists $\epsilon>0$ for which for any $X\in \p B_{\phi_{i_1}}$ with $|X - N_{\min}|_{\Sigma_{i_1}}(Q)|<\epsilon$ there exist $Y\in \p B_{\phi_{i_2}}$ and $Z\in \p B_{\phi_{i_3}}$ such that 
$$
|Y - N_{\min}|_{\Sigma_{i_2}}(Q)| + |Z - N_{\min}|_{\Sigma_{i_3}}(Q)| < \epsilon\quad\text{and}\quad X+Y+Z = 0;
$$ 

\item $N_{\min}|_{\Sigma_{i_j}}(Q)$ is not a vertex of $B_{\phi_{i_j}}.$

\end{itemize}

\end{definition}

In other words, a network is stable if and only if the minimal Cahn-Hoffman vector field at every triple junction lies in the interior of the corresponding admissible regions of triplets. For instance, for the Wulff shapes depicted in Figure \ref{fig:three_bad_hex} any network satisfying $N_{\min} = X$ at a triple junction is not stable.
As we have seen in the case of a single anisotropy $\phi,$ whose Wulff shape is a regular polygon, any admissible $\Sigma$ is stable if and only if at each triple junction $N_{\min}$ is not a vertex of $B_\phi$ (see Figure \ref{fig:admissible_triplets}).  In particular, the theta-shaped network in Figure \ref{fig:hex_networks} and the triod in Figure \ref{fig:possible_triod} (provided \eqref{cond_stabilitysasas} holds) are stable.  

\begin{figure}[htp!]
\begin{center}
\includegraphics[width=0.9\textwidth]{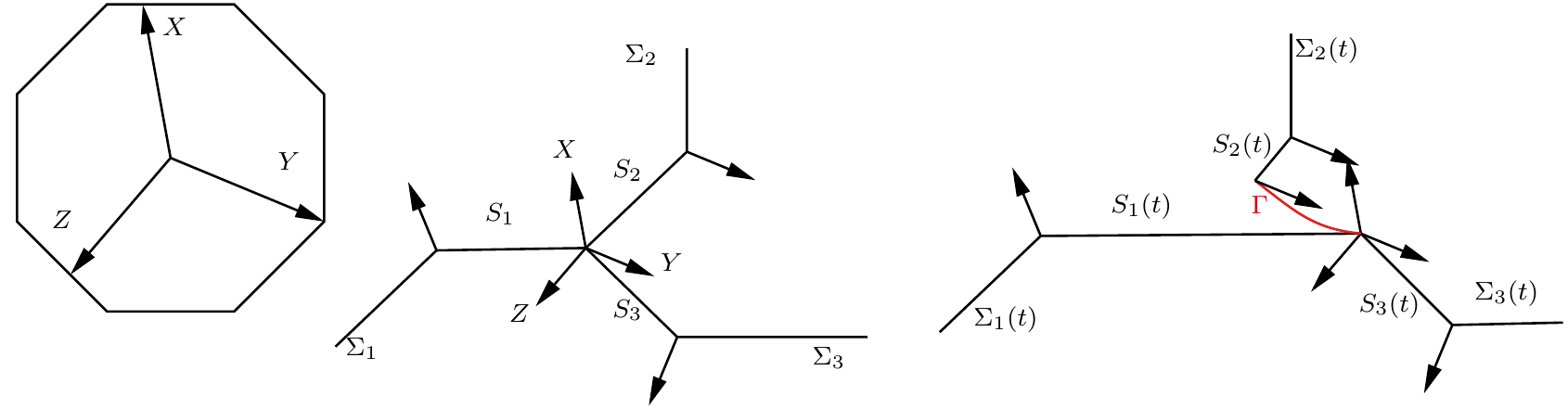}
\caption{\small Appearance of a $\Phi$-zero curvature curve in an unstable network. Notice that in this situation $S_2$ has zero crystalline curvature, but $S_1$ and $S_3$ have non-zero crystalline curvature, therefore, in the evolution they move, whereas $S_2$ does not move. Thus, to continue the motion, the network  creates a ``curved'' part, which does not fall within our simplified definition of polycrystalline curvature flow.}
\end{center}
\end{figure}

\begin{remark}
For any admissible $\Sigma:=\bigcup_{i=1}^n\Sigma_i$ let 
\begin{equation}\label{msaushuah}
\delta_\Sigma:=\min\,|\restriction {N_{\min}^{\Sigma}}|{\Sigma_i}(Q)- \restriction {N}|{\Sigma_i}(Q)|,
\end{equation}
where the minimum is taken over all triple junctions $Q$ and all Cahn-Hoffman vector fields $N\in\cN^\Sigma$ such that at least one $\restriction {N}|{\Sigma_i}(Q)$ belongs to the boundary of the admissible region or to a vertex of $B_{\phi_i}.$ Then $\Sigma$ is stable if and only if $\delta_\Sigma>0.$ Therefore, in view of Lemma \ref{lem:curvature_parallel_network}, as we have observed above in the octagon and hexagon examples, a slight (still admissible) perturbation of a stable network is again stable. An interesting phenomena may occur in the unstable (i.e. in the not stable) case: in this case a slight perturbation of the network either becomes stable or a new zero $\Phi$-curvature curve/segment may start to grow from the triple junction. A discussion on such phenomena can be found in \cite{BCN:2006} for a single anisotropy.
\end{remark}

\begin{theorem}[Local existence and uniqueness]\label{teo:short_time_ccflow}
Let $\Phi:=\{\phi_i\}_{i=1}^3$ be crystalline anisotropies and $\Sigma^0:=\cup_{i=1}^3\Sigma_i^0$ be a stable polygonal network having a single triple junction, where each $\Sigma_i^0$ consists of a single segment $S_i$ and a half-line $L_i$, oriented starting from the triple junction. Then there exist $T>0$ and a unique polycrystalline curvature flow $\{\Sigma(t)\}_{t\in[0,T]}$ of admissible stable networks starting from $\Sigma^0.$ 
\end{theorem}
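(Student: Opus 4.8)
The plan is to reduce this geometric evolution to a finite-dimensional ODE and then apply the Cauchy--Lipschitz theorem. First I would exploit the rigidity built into Definition \ref{def:curvature_flow}: since every $\Sigma(t)$ must be parallel to $\Sigma^0$, the three segments keep their initial directions and the half-lines stay on their initial lines. Hence the whole network is determined by the position of the single triple junction $q(t)$: the segment $S_i(t)$ is the portion, between $q(t)$ and $\mathrm{line}(L_i^0)$, of the line through $q(t)$ parallel to $S_i^0$, and its length $\ell_i(t)=\cH^1(S_i(t))$ is, by Proposition \ref{prop:height_vs_length} (with vanishing distance vectors along the half-lines), an affine function of the signed distances $h_i(t)=H_i(t)\cdot\nu_i$, and thus a smooth function of $q(t)$ on a neighbourhood of $q^0$. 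The configuration space is therefore two-dimensional and the flow is encoded by a curve $q:[0,T]\to\R^2$.

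Next I would check that each $\kappa^\Phi(S_i)$ is a smooth function of $q$. By Lemma \ref{lem:curvature_parallel_network} the minimum problem \eqref{eNmin} reduces to the quadratic program \eqref{dajdada} in the values of the Cahn--Hoffman field at the triple junction, whose coefficients depend only on the reciprocals $1/\ell_i$; exactly as in the explicit computation \eqref{minprob_one_tripod}, the unique minimizer, and hence each $\kappa^\Phi(S_i)$, is a rational---and therefore smooth---function of the $\ell_i$, \emph{provided} the minimizing triplet stays in the interior of its admissible region and off the vertices of the $B_{\phi_i}$. This is precisely stability, which by \eqref{msaushuah} amounts to the strict inequality $\delta_\Sigma>0$; since $\delta_\Sigma$ depends continuously on the (parallel) configuration, stability is an open condition and is preserved, together with the positivity $\ell_i>0$, for $q$ near $q^0$.

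I would then set up and solve the ODE. Writing $v_i:=-\phi_i^o(\nu_i)\kappa^\Phi(S_i)$ for the prescribed normal speed of $S_i$, the fact that $q(t)$ lies on each moving line forces
\[
\dot q\cdot\nu_i=v_i,\qquad i=1,2,3,
\]
an overdetermined system for $\dot q\in\R^2$. Its solvability is the heart of the matter, and here the variational definition of $\kappa^\Phi$ is decisive. Differentiating the admissibility constraint $X_1+X_2+X_3=0$ along the edges of the $B_{\phi_i}$ gives $\sum_i\tau_{S_i}\,\delta_i=0$ for the one-dimensional family of admissible variations $(\delta_i)$, while the first-order optimality of \eqref{dajdada} reads $\sum_i v_i\,\delta_i=0$ (because $d\kappa_i=-\delta_i/\ell_i$ along such variations). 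Since $\sum_i\tau_{S_i}\delta_i=0$ is equivalent, after a $90^\circ$ rotation, to $\sum_i\nu_i\,\delta_i=0$, the vector $(\delta_i)$ spans the orthogonal complement of the range of $\dot q\mapsto(\dot q\cdot\nu_i)_i$; hence $\sum_i v_i\delta_i=0$ is exactly the compatibility condition making the three scalar equations consistent. Consequently $\dot q=G(q)$ is well defined (solve any two equations; the third holds automatically) and $G$ is smooth by the previous step.

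Finally, Cauchy--Lipschitz yields a unique $q\in C^1([0,T];\R^2)$ with $q(0)=q^0$ for some $T>0$, and reconstructing $\Sigma(t)$ from $q(t)$ produces a polycrystalline curvature flow; shrinking $T$ if necessary keeps all $\ell_i>0$ and $\delta_{\Sigma(t)}>0$, so every $\Sigma(t)$ is admissible and stable. For uniqueness, any flow in the sense of Definition \ref{def:curvature_flow} is parallel to $\Sigma^0$ and its distance vectors solve the same ODE with the same right-hand side (by the uniqueness of $N_{\min}$), so it agrees with the constructed one on $[0,T]$. I expect the main obstacle to be the consistency of the overdetermined triple-junction system in the third step; once the curvature is known to be a smooth function of the configuration, the remaining estimates are routine.
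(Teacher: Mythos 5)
Your proposal is correct, and at its core it follows the same strategy as the paper: use parallelism to reduce the flow to finitely many scalar unknowns, use stability to keep the minimizing Cahn--Hoffman values interior to the admissible regions so that the crystalline curvatures depend Lipschitz-continuously (in fact rationally) on the configuration, resolve the overdetermination at the triple junction by the variational balance identity, and finish with an ODE argument. The executions differ in instructive ways. You parametrize the configuration by the junction position $q\in\R^2$ and prove solvability of the overdetermined system $\dot q\cdot\nu_i=v_i$ by observing that the first-order optimality condition of \eqref{dajdada} (valid because stability makes the edge variations two-sided --- a point you leave implicit but which is exactly where stability enters) annihilates the one-dimensional obstruction $\{(\delta_i):\sum_i\delta_i\tau_{S_i}=0\}$; the paper instead parametrizes by the signed heights $(h_1,h_2,h_3)$ subject to the linear constraint $\sum_i h_i\sin\theta_i=0$ (its Steps 2--3 show this is an equivalent description of your $q$), and derives the same identity --- the curvature balance \eqref{woho_whosho} --- from the explicit formula for the minimizer of the one-variable quadratic in its Step 4; these are the same computation in abstract versus explicit form. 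For the ODE you invoke Cauchy--Lipschitz, which is legitimate since the right-hand side is a rational function of the lengths $\ell_i$, themselves affine in $q$ by Proposition \ref{prop:height_vs_length}; the paper, having recorded only Lipschitz dependence of the curvatures on the heights (its Step 5), runs a Schauder fixed-point argument on the constrained space $\cS_T$ --- where checking that the integral map preserves the constraint is precisely the counterpart of your consistency step --- and then proves uniqueness separately via the Lipschitz bound. Your route is marginally more economical (Picard--Lindel\"of gives existence and uniqueness at once, and the paper's own Lipschitz estimates show Schauder is not strictly needed), while the paper's constrained-heights formulation matches Definition \ref{def:curvature_flow} verbatim and is the form that extends to networks whose curves consist of several segments.
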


\begin{proof}
Let $S_i$ (resp. $L_i$), $i=1,2,3,$ be the segments (resp. half-lines) forming a triple junction, oriented from the triple junction, and let $\theta_i$ be the angle between $S_j$ and $S_k,$ $i\ne j\ne k\ne i,$ so that 
\begin{equation}\label{nosnoasnoasn}
\cos \theta_i=\tau_{S_j}\cdot \tau_{S_k},\quad \theta_1 + \theta_2 + \theta_3 = 2\pi. 
\end{equation}
Let us denote by $\gamma_i\in(0,\pi)$ the angle between $S_i $ and $L_i$ at their junction.
\smallskip

{\it Step 1.} Given $\rho>0,$ let $\cG_\rho$ be the collection of all admissible networks $\Sigma$ parallel to $\Sigma^0$ such that $d(\Sigma,\Sigma^0)\le \rho.$ Let us show that there exists $\rho_0>0$ depending only on $\Sigma^0$ such that any network $\Sigma\in \cG_{\rho_0}$ is stable.

For $\rho>0$ and $\Sigma\in \cG_\rho,$ let $\delta_{\Sigma}\ge0$ be given by \eqref{msaushuah}. Since $\Sigma^0$ is stable, we have $\delta_{\Sigma^0}>0.$ Assume that there exists a sequence $\{\Sigma(k)\}$ of unstable networks parallel to $\Sigma^0$ such that $d(\Sigma(k),\Sigma^0)<1/k$ for any $k\ge1.$ Then for $i=1,2,3,$ by \eqref{conve_cahnhoffman} 
$$
\restriction {N_{\min}^{\Sigma(k)}}|{S_i(k)}\big(\zeta(S_i(k),S_i,x)\big) \to \restriction {N_{\min}^{\Sigma^0}}|{S_i}(x)
$$
uniformly in $x\in S_i$, where 
$\zeta(S_i(k),S_i,\cdot):S_i\to S_i(k)$ is a linear bijection of $S_i$ to the corresponding parallel segment $S_i(k)\subset\Sigma(k).$ Thus, $0=\delta_{\Sigma(k)} \ge \delta_{\Sigma^0}/2>0$ for all large $k,$ a contradiction.

{\it Step 2.} Let $T_1,T_2,T_3$ (resp. $\bar T_1,\bar T_2,\bar T_3$) be segments, forming a triple junction and oriented from the triple junction, such that $T_i||\bar T_i$ for $i=1,2,3.$ If 
$$
h_i:=H(\bar T_i,T_i)\cdot \nu_{T_i},\quad i=1,2,3,
$$
then 
\begin{equation}\label{ndezf7fbe}
h_1\sin\omega_1 + h_2\sin\omega_2 + h_3\sin\omega_3 = 0,
\end{equation}
where $\omega_i$ is the angle between $T_j$ and $T_k,$ $i\ne j\ne k\ne i.$

Assume that we are as in the situation of Figure \ref{fig:x1x2x399}, i.e., $h_3\ge0\ge h_1,h_2$ and let $x_i = |h_i|$ for $i=1,2,3$.

\begin{figure}[htp!]
\begin{center}
\includegraphics[width = 0.5\textwidth]{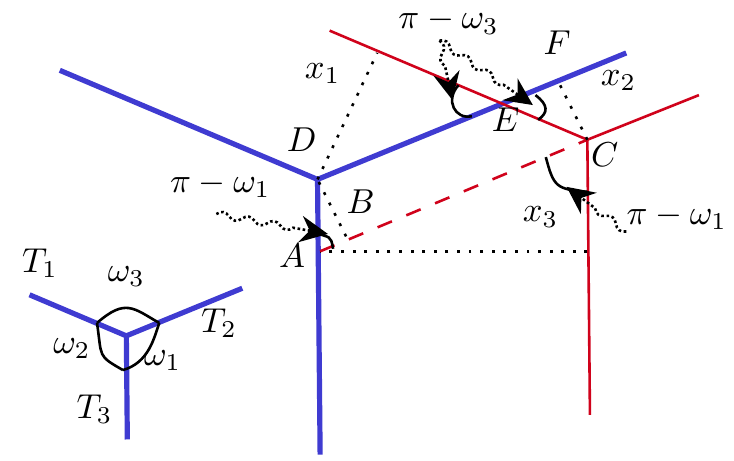}
\caption{\small Expressing $x_3$ with $x_1$ and $x_2.$}
\label{fig:x1x2x399}
\end{center}
\end{figure}
\noindent
Then 
$$
\cH^1([AC]) = \frac{x_3}{\sin(\pi-\omega_1)} = \frac{x_3}{\sin\omega_1}.
$$
On the other hand, since 
\begin{align*}
\cH^1([AC]) = & \cH^1([AB]) + \cH^1([DE]) + \cH^1([EF]) \\
= & -\frac{x_2}{\tan\omega_1} + \frac{x_1}{\sin\omega_3} - \frac{x_2}{\tan\omega_3}  
=  \frac{x_1}{\sin\omega_3} - \frac{x_2\sin(\omega_1+\omega_3)}{\sin\omega_1\sin\omega_3}
\end{align*}
and $\omega_1 + \omega_2 +\omega_3=2\pi,$ we have 
$$
\frac{x_3}{\sin\omega_1} = \frac{x_1}{\sin\omega_3} + \frac{x_2\sin\omega_2}{\sin\omega_1\sin\omega_3}
$$
and \eqref{ndezf7fbe} follows. The proof in the other cases is similar.

{\it Step 3.} Let $h_1,h_2,h_3$ be real numbers such that $|h_i|\le \rho_0$ and 
\begin{equation}\label{xaocsbvu}
\sum\limits_{i=1}^3 h_i\sin\theta_i = 0. 
\end{equation}
Then there exists a unique $\Sigma\in \cG_{\rho_0}$ such that 
$$
h_i = H_i(S_i,S_i^0)\cdot \nu_{S_i^0},\quad i=1,2,3.
$$
Indeed, using $h_1$ and $h_2$ we can construct a unique network $\Sigma$ parallel to $\Sigma^0$ and $h_i=H_i(S_i,S_i^0)\cdot \nu_{S_i^0}$ for $i=1,2.$ Let $h_3':=H(S_i,S_i^0)\cdot \nu_{S_i^0}.$ 
By Step 2 we know 
$$
h_1\sin\theta_1 + h_2\sin\theta_2 +h_3'\sin\theta_3= 0.
$$
Then \eqref{xaocsbvu} implies $h_3=h_3'$ and hence, $\Sigma\in \cG_{\rho_0}.$

{\it Step 4.} Let us study some properties of the minimizing Cahn-Hoffman vector field  $N_\star:=N_{\min}^\Sigma$ of $\Sigma\in \cG_{\rho_0}.$ 

By Remark \ref{rem:prop_curvature812} $N_\star$ is uniquely defined at the endpoints of each $L_i,$ and coincide with $N_\star^0:=N_{\min}^{\Sigma^0}.$ Thus, we only need to care at the triple junction of $S_1,S_2,S_3$. Writing $S_i:=[A_0A_i]$ and using Lemma \ref{lem:curvature_parallel_network} (b) we find that $N_{\star}$ minimizes the functional 
$$
\cF_\Sigma(N):=\sum\limits_{i=1}^3\frac{ \phi_i^o(\nu_{S_i}) }{\cH^1(S_i)}\,\Big|[\restriction N|{S_i}(A_i) - \restriction N|{S_i}(A_0)]\cdot \tau_{S_i} \Big|^2,\quad N\in\cN^\Sigma,
$$
where $\phi_{i}$ is the anisotropy, corresponding to $\Sigma_i.$ Since $A_i$ is the endpoint of the half-line $L_i,$ as we observed above $N(A_i) = N_\star(A_i) = N_\star^0(A_i^0),$ where we set $S_i^0:=[A_0^0A_i^0].$ Thus, recalling $\nu_{S_i} = \nu_{S_i^0}$ and $\tau_{S_i} = \tau_{S_i^0},$ we get
\begin{equation}\label{starstarstara}
\cF_\Sigma(N):=\sum\limits_{i=1}^3\frac{ \phi_{i}^o(\nu_{S_i^0}) }{\cH^1(S_i)}\,\Big|a_i - x_i\Big|^2,
\end{equation}
where
$$
a_i:=[\restriction {N_\star^0}|{S_i^0}(A_i^0) - \restriction {N_\star^0}|{S_i^0}(A_0)]\cdot \tau_{S_i^0} = \kappa^\Phi(S_i^0)\cH^1(S_i^0)
$$
and
$$
x_i:=[\restriction N|{S_i}(A_0) - \restriction {N_\star^0}|{S_i^0}(A_0^0)]\cdot \tau_{S_i^0}.
$$
By the choice of $\rho_0$ and the stability of $\Sigma,$ we may consider only those $N\in\cN^\Sigma$ such that  $\restriction N|{S_i}(A_0)$ and $\restriction {N_\star^0}|{S_i^0}(A_0^0)$ lie in the same side of the Wulff shape $B_{\phi_{k_i}}$, parallel to $\tau_{S_i^0},$ and therefore
$$
\restriction N|{S_i}(A_0) =  \restriction {N_\star^0}|{S_i^0}(A_0^0) +  x_i\tau_{S_i^0}.
$$
Now by the balance condition at $A_0$ we get 
$$
\sum\limits_{i=1}^3 x_i\tau_{S_i^0} = 0,
$$
or equivalently, by \eqref{nosnoasnoasn}
$$
x_i = -x_j \cos\theta_k - x_k \cos\theta_j,\quad i\ne j\ne k\ne i.
$$
These equalities immediately imply 
$$
\frac{x_1}{\sin\theta_1} = \frac{x_2}{\sin\theta_2} = \frac{x_3}{\sin\theta_3}. 
$$
Therefore, we have only one independent variable, i.e., as in the case of a single crystalline anisotropy, knowing only one value of $\restriction N|{S_i}$ we uniquely determine the admissible triplet. Letting 
\begin{equation}\label{hshahahashs}
x_2 = x_1\,\frac{\sin \theta_2}{\sin\theta_1}\quad\text{and}\quad x_3 = x_1\,\frac{\sin \theta_3}{\sin\theta_1}, 
\end{equation}
we can rewrite $\cF_\Sigma(N)$ in \eqref{starstarstara} as a quadratic function of $x_1:$
$$
\cF_\Sigma(N) = \alpha_2 x_1^2 + \alpha_1x_1 + \alpha_0:=f(x_1),
$$
where 
\begin{align*}
\alpha_2 = & \frac{1}{\sin^2\theta_1} \sum\limits_{i=1}^3 \frac{\phi_i^o(\nu_{S_i^0})\sin^2\theta_i}{\cH^1(S_i)},\\
\alpha_1 = & -\frac{2}{\sin\theta_1} \sum\limits_{i=1}^3 \phi_i^o(\nu_{S_i^0}) \kappa^\Phi(S_i^0)\,\frac{\cH^1(S_i^0)\sin\theta_i}{\cH^1(S_i)},\\
\alpha_0 = & \sum\limits_{i=1}^3  \frac{\phi_i^o(\nu_{S_i^0})}{\cH^1(S_i)}\, [\kappa^\Phi(S_i^0)\cH^1(S_i^0)]^2.
\end{align*}

Since $\cF_\Sigma$ has a unique minimizer $N=N_{\star}$ and $\Sigma$ is stable, $f$ has a unique minimizer $x_1^\star:=-\frac{\alpha_1}{2\alpha_0}.$ Thus, uniquely defining $x_2^\star$ and $x_3^\star$ using the relations \eqref{hshahahashs}, we get
$$
\restriction {N_\star}|{S_i}(A_0) =  \restriction {N_\star^0}|{S_i^0}(A_0^0) +  x_i^\star \tau_{S_i^0}.
$$
Therefore, by the definition of the crystalline curvature,
\begin{equation}\label{mucci_mucci}
\kappa^{\Phi}(S_i) = \frac{[\restriction {N_\star}|{S_i}(A_i) - \restriction {N_\star}|{S_i}(A_0)]\cdot \tau_{S_i^0}}{\cH^1(S_i)} = \frac{\cH^1(S_i^0)\kappa^\Phi(S_i^0) - x_i^\star}{\cH^1(S_i)}. 
\end{equation}
By uniqueness, $N_\star = N_\star^0$ if $\Sigma = \Sigma^0,$ and thus, in this case we should have $x_i^\star = 0.$ Then by the explicit expression of $x_1^\star$ we get 
\begin{equation}\label{woho_whosho}
\sum\limits_{i=1}^3 \phi_i^o(\nu_{S_i^0})\kappa^\Phi(S_i^0)\sin\theta_i = 0. 
\end{equation}
Note that this condition on curvatures is analogous to the smooth case \eqref{no_tangency}.

Similarly, slightly perturbing $\Sigma$ and repeating the same arguments above we find that \eqref{woho_whosho} holds also with $\Sigma$ in place of $\Sigma^0,$ i.e., any admissible stable network $\Sigma,$ parallel to $\Sigma^0,$ satisfies this curvature-balance condition. This condition will be important in the sequel.

{\it Step 5.} By Proposition \ref{prop:height_vs_length} we can compute 
$$
\cH^1(S_i) = \cH^1(S_i^0) + g_i(h_1,h_2,h_3)
$$
for some positively one-homogeneous Lipschitz function $g_i.$ Inserting this in \eqref{mucci_mucci} we get
$$
\kappa^\Phi(S_i) = \kappa^\Phi(S_i^0) + G_i(h_1,h_2,h_3),
$$
where $G_i$ is a Lipschitz function, satisfying $G_i(0,0,0) = 0.$

{\it Step 6.} Given $T>0,$ let 
$
\cS_T
$ be the space of all triplets $h:=(h_1,h_2,h_3)$ of functions $h_i\in C([0,T])$ satisfying 
\begin{equation}\label{nedbhfjv45}
h_1(t)\sin\theta_1 + h_2(t)\sin\theta_2 + h_3(t)\sin\theta_3 =0,\quad i\in[0,T],
\end{equation}
and 
\begin{equation}\label{max_heigh_boundaoda}
h_i(0) = 0,\quad \max_i \|h_i\|_\infty\le \rho_0. 
\end{equation}
Consider the map $F_h=(F_h^1,F_h^2,F_h^3),$ where
$$
F_h^i(t):=\phi_i^o(\nu_{S_i^0})\,\int_0^t \Big(\kappa^\Phi(S_i^0) + G_i(h_1(s),h_2(s),h_3(s))\Big)ds,\quad t\in[0,T].
$$
We claim that $F_h$ has a fixed point in $\cS_T$ provided that 
$$
T \le \frac{\rho_0}{1 + \max_i \Big(\|\phi_i^o\|_\infty [|\kappa^\Phi(S_i)| + 3\ell_i\rho_0 ]\Big)},
$$
where  $\ell_i>0$ is the Lipschitz constant of $G_i.$
Indeed, note that $\cS_T$ is convex and closed. Let us show that $F_h\in \cS_T$ for any $h\in \cS_T.$ Clearly, $F_h(0) = 0.$ By the definition of $G_i,$
$$
|G_i(h_1,h_2,h_3)| \le \ell_i (|h_1| + |h_2|+|h_3|) \le 3\ell_i\rho_0.
$$
Therefore, by the choice of $T,$
$$
\|F_h^i\|_\infty \le \|\phi_i^o\|_\infty (|\kappa^\Phi(S_i)| + 3\ell_i\rho_0) T < \rho_0.
$$
Next we show 
\begin{equation}\label{no_ya_igrayu_eturol}
\sum\limits_{i=1}^3 F_h^i(t)\sin\theta_i = 0\quad\text{for any  $t\in [0,T]$}. 
\end{equation}
By assumptions \eqref{nedbhfjv45}-\eqref{max_heigh_boundaoda} and Step 3, for each $s\in[0,T]$ there exists a unique network $\Sigma(s)\in \cG_{\rho_0}$ satisfying $h_i(s) = H_i(S_i(s),S_i^0)\cdot \nu_{S_i^0}$.
Then by Step 5 its curvature is given by
\begin{equation}\label{ancsdvubuc}
\kappa^\Phi(S_i(s)) = \kappa^\Phi(S_i^0) + G_i(h_1(s),h_2(s),h_3(s)) 
\end{equation}
and by Step 4, it satisfies the curvature-balance condition
$$
\sum\limits_{i=1}^3 \phi_i^o(\nu_{S_i^0}) \kappa^\Phi(S_i(s))\sin\theta_i = 0.
$$
From this and \eqref{ancsdvubuc} we deduce 
$$
\sum\limits_{i=1}^3 \phi_i^o(S_i^0) [\kappa^\Phi(S_i^0) + + G_i(h_1(s),h_2(s),h_3(s)) ] \sin\theta_i = 0,\quad s\in[0,T],
$$
and hence integrating this equality we obtain \eqref{no_ya_igrayu_eturol}.

Next let us prove that the set $\{F_h\}_{h\in\cS_T}$ is compact in $(C([0,T]))^3$. Indeed, by the choice of $T,$ this set is equibounded in $(C([0,T]))^3.$ Moreover, as we have seen above
$$
\|[F_h^i]'\|_\infty \le |\kappa^\Phi(S_i^0)| + 3\ell_i\rho_0.
$$
Thus,  $\{F_h\}_{h\in\cS_T}$ is also equicontinuous in $(C[0,T])^3.$ Therefore, by the Arzela-Ascoli theorem, it is compact. Now the Schauder fixed point theorem implies the existence of $\bar h\in\cS_T$ such that 
$F_{\bar h} = \bar h.$ A bootstrap argument shows that $\bar h\in C^1([0,T])$ and hence, $\bar h' = F_{\bar h}',$ i.e.,
$$
\bar h_i'(t) =\phi_i^o(S_i^0) \Big(\kappa^\Phi(S_i^0) + G_i(\bar h_1(t),\bar h_2(t),\bar h_3(t)\Big),\quad i=1,2,3.
$$
As we observed earlier, the right-hand side of this equality is the (multiple of the) crystalline curvature of the unique network $\bar\Sigma(t)\in\cG_{\rho_0}$ such that 
$$
\bar h_i(t) = H_i(\bar S_i(t), S_i^0)\cdot \nu_{S_i^0},\quad t\in[0,T].
$$
In particular, by Step 1 each $\bar\Sigma(t)$ is stable. Since $h_i(0) = 0,$ we have $\bar\Sigma(0)=\Sigma^0$, and therefore $\{\bar\Sigma(\cdot)\}$ is the crystalline curvature flow of stable networks starting from $\Sigma^0$ (in the sense of Definition \ref{def:curvature_flow}).

Finally, let us show the uniqueness of the flow. Let $\{\bar \Sigma(\cdot)\}$ and $\{\hat \Sigma(\cdot)\}$ be two different flows  in $[0,T]$ starting from $\Sigma^0,$ and let $\bar h:=(\bar h_1,\bar h_2,\bar h_3)$ and  $\hat h:=(\hat h_1,\hat h_2,\hat h_3)$ be the corresponding signed distances. Since both solve the same ODE of Definition \ref{def:curvature_flow}, we have 
$$
\bar h = F_{\bar h}\quad\text{and}\quad \hat h = F_{\hat h}.
$$
Thus, by the Lipschitzianity of $G_i$ we get
$$
|\bar h(t) - \hat h(t)| \le  \max_i \ell_i\|\phi_i^o\|_\infty \|\bar h - \hat h\|_\infty T\quad\text{for any $t\in[0,T]$}.
$$
Thus, by the choice of $T,$ we get 
$$
0< \|\bar h - \hat h\|_\infty \le  \max_i \ell_i\|\phi_i^o\|_\infty \|\bar h - \hat h\|_\infty T < \|\bar h - \hat h\|_\infty,
$$
a contradiction. 
\end{proof}


\section{Appendix}

In this appendix we prove two lemmas used in the proof of Theorem \ref{teo:existence_special_mcf}.

\begin{lemma}[\textbf{H\"older continuity of the composition}]\label{lem:Holder_norm_of_composition}
Let $\Omega\subset {\mathbb R}^n$ be a bounded open set, $\rho\in(0,1]$ and let $G\in C^{1+\rho}(\overline  \Omega).$ Then
\begin{align}\label{Holder_G_gute_2d}
[G(b_1) - G(b_2)]_{\rho'} \le \|\nabla G\|_\infty [b_1-b_2]_{\rho'} 
+ [\nabla G]_\rho \max\{[b_1]_\rho,[b_2]_{\rho'}\}\|b_1- b_2\|_\infty^\rho
\end{align}
for any $b_1,b_2\in C^{\rho'}([0,1];\cl{\Omega})$ with $\rho'\in(0,\rho],$ where $[\cdot]_\rho$ is the H\"older seminorm.
\end{lemma}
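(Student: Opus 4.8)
The plan is to estimate directly the $\rho'$-H\"older seminorm of the function $g(x):=G(b_1(x))-G(b_2(x))$ on $[0,1]$. Fixing $x\ne y$, the key decision is \emph{which} increments to integrate $\nabla G$ along: rather than comparing $G(b_1(x))$ with $G(b_2(x))$ at a fixed point $x$ (this produces the wrong power of $|x-y|$, as explained below), I would write $g(x)-g(y)=[G(b_1(x))-G(b_1(y))]-[G(b_2(x))-G(b_2(y))]$ and represent each bracket by the fundamental theorem of calculus along the segment joining $b_i(y)$ to $b_i(x)$, namely $G(b_i(x))-G(b_i(y))=Q_i\cdot\Delta b_i$, where $\Delta b_i:=b_i(x)-b_i(y)$ and $Q_i:=\int_0^1 \nabla G\bigl(b_i(y)+s\,\Delta b_i\bigr)\,ds$. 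Since $Q_i$ is an average of values of $\nabla G$, one has $|Q_i|\le\|\nabla G\|_\infty$.

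Next I would split $g(x)-g(y)=Q_1\cdot(\Delta b_1-\Delta b_2)+(Q_1-Q_2)\cdot\Delta b_2$ and estimate the two pieces separately. For the first piece, $|Q_1\cdot(\Delta b_1-\Delta b_2)|\le\|\nabla G\|_\infty\,|\Delta b_1-\Delta b_2|$, and since $\Delta b_1-\Delta b_2=(b_1-b_2)(x)-(b_1-b_2)(y)$ we get $|\Delta b_1-\Delta b_2|\le[b_1-b_2]_{\rho'}\,|x-y|^{\rho'}$, which is exactly the first term of \eqref{Holder_G_gute_2d}. For the second piece I would use $|\Delta b_2|\le[b_2]_{\rho'}|x-y|^{\rho'}$ together with the crucial estimate on $Q_1-Q_2$: at parameter $s$ the two integrands are $\nabla G$ evaluated at $(1-s)b_1(y)+s\,b_1(x)$ and at $(1-s)b_2(y)+s\,b_2(x)$, whose arguments differ by $(1-s)(b_1-b_2)(y)+s\,(b_1-b_2)(x)$, a convex combination of modulus at most $\|b_1-b_2\|_\infty$; hence by the $\rho$-H\"older continuity of $\nabla G$, $|Q_1-Q_2|\le[\nabla G]_\rho\,\|b_1-b_2\|_\infty^\rho$. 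Combining, $|g(x)-g(y)|\le\bigl(\|\nabla G\|_\infty[b_1-b_2]_{\rho'}+[\nabla G]_\rho[b_2]_{\rho'}\|b_1-b_2\|_\infty^\rho\bigr)|x-y|^{\rho'}$; dividing by $|x-y|^{\rho'}$ and taking the supremum, and using $[b_2]_{\rho'}\le\max\{[b_1]_\rho,[b_2]_{\rho'}\}$, yields \eqref{Holder_G_gute_2d}. (By symmetry one could equally isolate $[b_1]_{\rho'}$, so the sharp form carries the minimum of the two seminorms.)

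The main point to get right — and the reason for this particular splitting — is the bookkeeping of exponents: the factor $\|b_1-b_2\|_\infty^\rho$ must be fed into $[\nabla G]_\rho$ through the \emph{sup-norm} distance between the two segments, while the power $|x-y|^{\rho'}$ must come from a \emph{single} H\"older increment $\Delta b_2$. The naive ``vertical'' decomposition $g(x)=P(x)\cdot(b_1(x)-b_2(x))$ with $P(x)=\int_0^1\nabla G(b_2(x)+t(b_1(x)-b_2(x)))\,dt$ instead gives $|P(x)-P(y)|\le C[\nabla G]_\rho\,|x-y|^{\rho\rho'}$, and since $\rho\le1$ the resulting quotient by $|x-y|^{\rho'}$ carries $|x-y|^{\rho'(\rho-1)}$ and need not stay bounded as $x\to y$; this is the trap the above decomposition avoids. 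The only regularity subtlety is that the FTC representation evaluates $\nabla G$ along the segments $[b_i(y),b_i(x)]$, so these must lie in $\overline{\Omega}$: the clean form stated holds as soon as the segments remain inside $\overline{\Omega}$ (in particular if $\Omega$ is convex), and otherwise one first replaces $G$ by a $C^{1+\rho}(\R^n)$ extension before running the same argument.
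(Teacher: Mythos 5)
Your argument is correct, and its core coincides with the paper's own proof: the paper likewise represents the increments $G(b_i(x))-G(b_i(y))$ by the fundamental theorem of calculus along the segment joining $b_i(y)$ to $b_i(x)$, and splits the resulting double difference into a $\|\nabla G\|_\infty\,[b_1-b_2]_{\rho'}$ piece and a $[\nabla G]_\rho\,\|b_1-b_2\|_\infty^{\rho}$ piece --- exactly your ``horizontal'' decomposition. (The paper's displayed identity carries an inconsequential typo, with $Q_1$ where $Q_2$ should stand in the second summand; your splitting $Q_1\cdot(\Delta b_1-\Delta b_2)+(Q_1-Q_2)\cdot\Delta b_2$ is the correctly bookkept version of the same identity.) Where you genuinely diverge is the treatment of $n>1$: the paper first proves the one-dimensional estimate and then reduces the general case to it by telescoping one coordinate at a time, which forces it to evaluate $G$ at mixed points such as $(b_2^1,b_1^2,\dots,b_1^n)$; you instead run the vector-valued FTC directly with $\nabla G$. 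Your route is cleaner, yields the marginally sharper bound with $[b_2]_{\rho'}$ alone in place of the maximum, and avoids a hidden domain issue in the paper's reduction: the mixed points need not lie in $\overline{\Omega}$ even when $b_1,b_2$ take values there, so the telescoping implicitly requires a box-type condition on $\Omega$, which is even stronger than the convexity your straight segments need. Relatedly, you are right to flag that the segments $[b_i(y),b_i(x)]$ must lie in $\overline{\Omega}$ and to repair this by convexity or by passing to a $C^{1+\rho}(\R^n)$ extension; the paper is silent on this point, although in its own application (Lemma \ref{lem:holder_estimates_for_ai}) the relevant set is an annulus, which is not convex, so the extension remark is needed there as well (an extension inflates $\|\nabla G\|_\infty$ and $[\nabla G]_\rho$ only by a dimensional factor, which is harmless for the way the lemma is used). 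Your closing explanation of why the ``vertical'' splitting $G(b_1(x))-G(b_2(x))=P(x)\cdot(b_1(x)-b_2(x))$ produces the wrong power of $|x-y|$ is also accurate, and is precisely the trap both proofs are designed to avoid.
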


\begin{proof}
First we assume that $n=1$ and $\Omega$ is a bounded interval of ${\mathbb R}.$ Then for any $b_1,b_2\in C^{\rho'}([0,1];\cl{\Omega})$  and $x,y\in[0,1],$ $x\ne y,$ we have
\begin{align}\label{hurmatli_muxlislar}
G(b_1(x)) - G(b_2(x)) - G(b_1(y)) + G(b_2(y))
= 
\int_{b_1(y)}^{b_1(x)} G'(t)dt - \int_{b_2(y)}^{b_2(x)} G'(t)dt.
\end{align}
Since 
$$
\int_a^b G'(t)dt = (b-a)\int_0^1 G'(a + t(b-a))d t,
$$
we can rewrite \eqref{hurmatli_muxlislar} as 
\begin{align*}
& G(b_1(x)) - G(b_2(x)) - G(b_1(y)) + G(b_2(y)) \\
= & (b_1(x) - b_1(y)) \int_0^1 \Big[G'(b_1(y) + t[b_1(x) - b_1(y)]) - G'(b_2(y) + t[b_2(x) - b_2(y)])\Big] dt \\
& + (b_1(x) - b_1(y) - b_2(x) + b_2(y)) 
\int_0^1 G'(b_1(y) + t[b_1(x) - b_1(y)]) dt.
\end{align*}
Hence, using $G'\in C^\rho(\overline\Omega)$ we get

\begin{align*}
& \frac{|G(b_1(x)) - G(b_2(x)) - G(b_1(y)) + G(b_2(y))|}{|x-y|^{\rho'} }\\
\le & [b_1]_{\rho'} [G']_\rho \int_0^1 \Big|(1-t)|b_1(y) - b_2(y)| + t|b_1(x) - b_1(y)|\Big|^\rho dt +[b_1 - b_2]_{\rho'} \|G'\|_\infty.
\end{align*}
Therefore,
\begin{equation}\label{one_dime_estimos}
[G(b_1) - G(b_2)]_{\rho'} \le \|G'\|_\infty[b_1-b_2]_{\rho'} + [G']_\rho\max\{[b_1]_{\rho'},[b_2]_{\rho'}\}\,\|b_1-b_2\|_\infty^\rho. 
\end{equation}

Now consider the case $n>1$ and let $b_i = (b_i^1,\ldots,b_i^n).$ Then by the triangle inequality we have
\begin{align*}
[G(b_1) - G(b_2)]_{\rho'} \le & [G(b_1^1,b_2^1\ldots,b_1^n) - G(b_2^1,b_1^1\ldots,b_1^n)]_{\rho'}
+ \ldots \\
& + [G(b_2^1,\ldots, b_2^{n-1},b_1^n) - G(b_2^1,\ldots, b_2^{n-1},b_2^n)]_{\rho'}.
\end{align*}
By \eqref{one_dime_estimos}
\begin{multline*}
[G(\cdots, b_1^i,\cdots) - G(\cdots,b_2^i,\cdots)]_{\rho'} \\
\le \sum\limits_{i=1}^n\|\nabla_iG\|_\infty [b_1^i - b_2^i]_{\rho'}
+ [\nabla_iG]_\rho \max\{[b_1^i]_{\rho'},[b_2^i]_{\rho'}\}\|b_1^i-b_2^i\|_\infty^{\rho},
\end{multline*}
where the corresponding variables in place of $\cdots$ in $G(\cdots, b_1^i,\cdots)$ and $  G(\cdots,b_2^i,\cdots)$ are the same and $\nabla_if:=\frac{\p f}{\p x_i}.$
Therefore, 
$$
[G(b_1) - G(b_2)]_{\rho'} \le \|\nabla G\|_\infty \sum\limits_{i=1}^n[b_1^i-b_2^i]_{\rho'} + [\nabla G]_\rho \sum\limits_{i=1}^n\max\{[b_1^i]_{\rho'},[b_2^i]_{\rho'}\}\|b_1^i - b_2^i\|_\infty^\rho
$$
and \eqref{Holder_G_gute_2d} follows.
\end{proof}

\begin{lemma}\label{lem:holder_estimates_for_ai}
Let $\psi$ be a $C^{2+\alpha}$-function defined in a tubular neighborhood of the unit circle $\S^1 \subset\R^2$ such that 
$$
0< m \le \min\limits_{\nu\in\S^1}\,\psi(\nu) \le \max\limits_{\nu\in\S^1}\,\psi(\nu)\le \frac{1}{m}
$$
for some $m\in(0,1].$ For $T>0$ and $m_1,m_2>0$, let $X_{T,m_1,m_2}$ be the subset of all $v\in C_T^{\frac{\alpha}{2},\alpha}$ such that 
$$
\min\limits_{t\in[0,T], x\in[0,1]} |v(t,x)| \ge m_1\quad\text{and}\quad \|v\|_{C_T^{\frac{\alpha}{2},\alpha}} \le m_2.
$$
For $v\in X_{T,m_1,m_2}$ define 
$$
a(v):= |v|^{-2}\psi\Big(\frac{v}{|v|}\Big).
$$
Then for any $v',v'' \in X_{T,m_1,m_2}$  
\begin{align}
& \| a(v') - a(v'')\|_\infty \le L\,\|v'-v''\|_\infty, \nonumber\\ 
& [a(v') - a(v'')]_{\alpha,x} \le L \Big[\|v'-v''\|_\infty 
+[v'-v'']_{\alpha,x}\Big],\label{a1_holder_zest01}\\
& [a(v') - a_1(v'')]_{\alpha/2,t} \le L \Big[\|v'-v''\|_\infty  + [v'-v'']_{\alpha/2,t}\Big], \label{a1_holder_test01}
\end{align}
and
\begin{align}\label{ai_holder_est009}
\|a(v')\|_{C_T^{\alpha,\frac{\alpha}{2}}} \le L, 
\end{align}
where $L$ depends (continuously) only on $m_1,$ $m_2,$ $\|\psi\|_\infty,$ $\|\nabla\psi\|_\infty$ and $\|\nabla^2\psi\|_\infty.$ 
\end{lemma}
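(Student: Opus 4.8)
The plan is to realise $a$ as a composition and to transfer the regularity of the outer function, the crucial point being to use the full $C^{2+\alpha}$ regularity of $\psi$ (not merely $C^{1+\alpha}$) to reach the \emph{linear} dependence on $\|v'-v''\|_\infty$ claimed in \eqref{a1_holder_zest01}--\eqref{a1_holder_test01}. Set
$$
G(w):=|w|^{-2}\,\psi\Big(\tfrac{w}{|w|}\Big),\qquad w\neq0,
$$
so that $a(v)=G\circ v$. First I would record that every $v\in X_{T,m_1,m_2}$ takes values in the compact annulus $A:=\{w\in\R^2:\ m_1\le|w|\le m_2\}$, since $|v|\ge m_1$ and $\|v\|_\infty\le\|v\|_{C_T^{\frac{\alpha}{2},\alpha}}\le m_2$. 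On $A$ the function $G$ is as regular as $\psi$: differentiating, $\nabla G$ and $\nabla^2G$ are finite combinations of terms $|w|^{-k}(\nabla^j\psi)(w/|w|)$ with $k\le4$, $j\le2$, whence $G\in C^2(\overline A)$ with $\|G\|_{C^2(\overline A)}\le L$, where $L$ depends continuously only on $m_1,m_2,\|\psi\|_\infty,\|\nabla\psi\|_\infty,\|\nabla^2\psi\|_\infty$. Because $A$ is not convex (segments joining two of its points may cross the hole), I would replace $G$ by a globally defined field: choosing a $C^{1,1}$ cut-off $\eta:[0,\infty)\to\R$ with $\eta(r)=r^{-2}$ on $[m_1,m_2]$ and $\eta\equiv0$ near $0$, the map $\widetilde G(w):=\eta(|w|)\,\psi(w/|w|)$ (with $\widetilde G(0):=0$) is of class $C^{1,1}(\R^2)$, agrees with $G$ on $A$, and satisfies $\|\widetilde G\|_{C^{1,1}(\R^2)}\le L$ with the same dependence. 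Since all our $v$ take values in $A$, we have $a(v)=\widetilde G\circ v$, and we may work on the convex box $\Omega:=(-2m_2,2m_2)^2\supset A$, which contains all the intermediate ``mixed coordinate'' points arising in the telescoping of the proof of Lemma \ref{lem:Holder_norm_of_composition}.

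The two estimates that do not involve a difference of two arguments are then immediate. Convexity of $\Omega$ together with $\|\nabla\widetilde G\|_\infty\le L$ gives, via the mean value theorem, $\|a(v')-a(v'')\|_\infty\le\|\nabla\widetilde G\|_\infty\|v'-v''\|_\infty\le L\|v'-v''\|_\infty$, which is the first inequality. For \eqref{ai_holder_est009}, the same Lipschitz bound yields $|a(v')(t,x)-a(v')(t,y)|\le\|\nabla\widetilde G\|_\infty|v'(t,x)-v'(t,y)|$ and the analogous increment in $t$, so that $[a(v')]_{\alpha,x}\le\|\nabla\widetilde G\|_\infty[v']_{\alpha,x}$ and $[a(v')]_{\frac{\alpha}{2},t}\le\|\nabla\widetilde G\|_\infty[v']_{\frac{\alpha}{2},t}$; combining these with $\|a(v')\|_\infty\le\sup_A|G|\le m_1^{-2}\|\psi\|_\infty$ and $[v']_{\alpha,x},[v']_{\frac{\alpha}{2},t}\le m_2$ gives $\|a(v')\|_{C_T^{\frac{\alpha}{2},\alpha}}\le L$.

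The heart of the matter is \eqref{a1_holder_zest01}--\eqref{a1_holder_test01}, and here I would stress that one cannot simply quote Lemma \ref{lem:Holder_norm_of_composition}: applied with $\rho=\alpha$ it produces the factor $\|v'-v''\|_\infty^{\alpha}$, which for small increments is \emph{larger} than the required linear factor $\|v'-v''\|_\infty$. The improvement is exactly what $\psi\in C^{2+\alpha}$ buys, since it makes $\nabla\widetilde G$ Lipschitz. I would re-run the fundamental-theorem-of-calculus identity from the proof of Lemma \ref{lem:Holder_norm_of_composition} (its decomposition of $\widetilde G(b_1(x))-\widetilde G(b_2(x))-\widetilde G(b_1(y))+\widetilde G(b_2(y))$), but estimate the increment of $\nabla\widetilde G$ along the two segments by its Lipschitz constant $\|\nabla^2\widetilde G\|_\infty\le L$ instead of by an $\alpha$-Hölder seminorm. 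As the two points at which $\nabla\widetilde G$ is compared differ by at most $\|b_1-b_2\|_\infty+|x-y|^{\rho'}[b_1-b_2]_{\rho'}$, this replaces $\|b_1-b_2\|_\infty^{\alpha}$ by $\|b_1-b_2\|_\infty$ and leaves
$$
[\widetilde G(b_1)-\widetilde G(b_2)]_{\rho'}\le L\big(1+[b_1]_{\rho'}\big)\big(\|b_1-b_2\|_\infty+[b_1-b_2]_{\rho'}\big).
$$
Applying this with $b_i=v^{(i)}(t,\cdot)$ and $\rho'=\alpha$, then taking the supremum over $t$, yields \eqref{a1_holder_zest01}; with $b_i=v^{(i)}(\cdot,x)$ and $\rho'=\alpha/2$, supremum over $x$, it yields \eqref{a1_holder_test01}. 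In both cases $[b_i]_{\rho'}\le\|v^{(i)}\|_{C_T^{\frac{\alpha}{2},\alpha}}\le m_2$ and $\|b_1-b_2\|_\infty\le\|v'-v''\|_\infty$, so the constant has the stated dependence. The only genuine obstacle is this passage from the $\alpha$-power to the linear power; the remaining work is the bookkeeping of the explicit derivatives of $G$ and the elementary compositions of the second paragraph.
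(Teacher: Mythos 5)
Your proof is correct, and in substance it runs on the same mechanism as the paper's: realize $a$ as a composition, observe that $\psi\in C^{2+\alpha}$ makes the gradient of the outer function Lipschitz on the annulus $\{m_1\le|\xi|\le m_2\}$, and feed this into the fundamental-theorem-of-calculus decomposition of Lemma \ref{lem:Holder_norm_of_composition} to obtain the \emph{linear} factor $\|v'-v''\|_\infty$. However, your central claim --- that ``one cannot simply quote Lemma \ref{lem:Holder_norm_of_composition}'' --- is a misreading of that lemma: its hypothesis allows any $\rho\in(0,1]$, and the paper applies it precisely with $\rho=1$, in which case $[\nabla G]_\rho$ is the Lipschitz constant of $\nabla G$ and the factor $\|b_1-b_2\|_\infty^{\rho}$ in \eqref{Holder_G_gute_2d} is exactly $\|b_1-b_2\|_\infty$; one then takes $\rho'=\alpha$ for the space seminorm and $\rho'=\alpha/2$ for the time seminorm, which is word for word the paper's argument. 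Your ``re-running'' of the identity is therefore not a new idea but a re-proof of the $\rho=1$ case of the quoted lemma --- valid, just redundant. Where your write-up genuinely adds something is the domain issue: the annulus is not convex, and the coordinate-by-coordinate telescoping in the $n>1$ part of the proof of Lemma \ref{lem:Holder_norm_of_composition} passes through mixed-coordinate points which can leave the annulus and even hit the origin, where $a$ is singular (take values $(m_1,0)$ and $(0,m_1)$). The paper is silent on this, whereas your cut-off extension $\widetilde G\in C^{1,1}(\R^2)$, agreeing with $a$ on the annulus and with norms controlled by the same constants $m_1$, $m_2$, $\|\psi\|_\infty$, $\|\nabla\psi\|_\infty$, $\|\nabla^2\psi\|_\infty$, disposes of it cleanly. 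In short: same approach, one obstruction misattributed (the lemma does apply, with $\rho=1$), and one point of extra, welcome, rigor.
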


In the proof of Theorem \ref{teo:existence_special_mcf} we apply this lemma with functions whose space-derivative $v_x$ belongs to $X_{T,\delta/2,M}$ and $\psi = \beta_j.$

\begin{proof}
Note that $\xi\in{\mathbb R}^2\mapsto |\xi|^{-n},$ $n\in\N,$ is Lipschitz continuous in $\{|\xi|\ge m_1\}.$ Indeed, 
$$
\begin{aligned}
\big||\xi_1|^{-n}  - |\xi_2|^{-n}\big| \le & \frac{|\xi_1 - \xi_2|}{|\xi_1|^n
|\xi_2|^n}\sum\limits_{i=0}^{n-1} |\xi_1|^i|\xi_2|^{n-1-i} \\ 
\le &
\frac{n|\xi_1 - \xi_2|}{|\xi_1|^n
|\xi_2|^n}\, \max\{|\xi_1|^{n-1},|\xi_2|^{n-1}\}\\
\le & \frac{n|\xi_1 - \xi_2|}{\min\{|\xi_1|^n,|\xi_2|^n\} \max\{|\xi_1|,|\xi_2|\}}
\end{aligned}
$$
so that 
\begin{equation*}
\big||\xi_1|^{-n}  - |\xi_2|^{-n}\big| \le \frac{n}{ m_1 ^{n+1}}\,|\xi_1 - \xi_2|. 
\end{equation*}
Similarly, the map $\xi\mapsto \frac{\xi}{|\xi|}$ is Lipschitz in $\{|\xi|\ge m_1\}:$  
$$
\begin{aligned}
\Big|\frac{\xi_1}{|\xi_1|} - \frac{\xi_2}{|\xi_2|}\Big| \le &
\frac{|\xi_1 -\xi_2|}{|\xi_1|} + \frac{\big||\xi_1| - |\xi_2|\big|}{|\xi_1|} \le \frac{2|\xi_1 -\xi_2|}{|\xi_1|}
\end{aligned}
$$
so that 
\begin{equation*}
\Big|\frac{\xi_1}{|\xi_1|} - \frac{\xi_2}{|\xi_2|}\Big| \le  \frac{2}{ m_1 }\,|\xi_1 - \xi_2|. 
\end{equation*}

Let 
$$
E:=\{ m_1  \le |\xi|\le m_2\} 
$$
and let us estimate $\|a\|_\infty$ and $[a]_{\alpha}$ in $E.$ Obviously,
$$
\|a\|_\infty \le \frac{\|\psi\|_\infty}{m_1} 
$$
and
\begin{align*}
|a(\xi') - a(\xi'')| \le & \big||\xi'|^{-2} - |\xi''|^{-2}\big|\, \psi\big(\tfrac{\xi'}{|\xi'|}\big) + |\xi''|^{-2} \Big[\psi\big(\tfrac{\xi'}{|\xi'|}\big)-\psi\big(\tfrac{\xi''}{|\xi''|}\big)\Big]\\[2mm]
\le & \frac{2\|\psi\|_\infty}{m_1^3}\,|\xi'-\xi''|
+ \frac{2\|\nabla\psi\|_\infty }{m_1^3}\,|\xi'-\xi''|\\[2mm]
= &\frac{2\|\psi\|_\infty+2\|\nabla\psi\|_\infty }{m_1^3}\,|\xi'-\xi''|
\end{align*}
so that 
$$
\|a(v') - a(v'')\|_\infty \le \frac{2\|\psi\|_\infty+2\|\nabla\psi\|_\infty }{m_1}\,\|v'-v''\|_\infty.
$$
Moreover, since $\psi$ is $C^{2+\alpha}$ in a tubular neighborhood of $\S^1,$ the function
$$
\nabla  a(\xi)= |\xi|^{-5}\big[\nabla \psi(\xi/|\xi|)\cdot \xi^c\big] \xi^c - 4|\xi|^{-4}\psi(\xi/|\xi|) \xi,
$$
is Lipschitz, where $\xi^c=(\xi_2,\xi_1)\in{\mathbb R}^2.$ Its Lipschitz constant does not exceed 
$$
L_1:=\frac{5\|\nabla\psi\|_\infty + 2\|\nabla^2\psi\|_\infty}{ m_1 ^6}\,m_2^2 + \frac{16\|\psi\|_\infty + 10\|\nabla\psi\|_\infty}{ m_1 ^5}\,m_2 + \frac{4\|\psi\|_\infty}{ m_1 ^4}.
$$
Now by Lemma \ref{lem:Holder_norm_of_composition} applied with $a,$ $\rho=1$ and $\rho'=\alpha$ and $\rho'=\alpha/2$ we get
\begin{align*}
[a(v') - a(v'')]_{\alpha,x} \le \|a\|_\infty [v'-v'']_{\alpha,x} 
+ \|\nabla a\|_\infty \max\{[v']_{\alpha,x},[v'']_{\alpha,x}\}\|v'-v''\|_\infty
\end{align*}
and 
\begin{align*}
[a(v') - a(v'')]_{\alpha/2,t} \le \|a\|_\infty [v'-v'']_{\alpha/2,t} \\
+ \|\nabla a\|_\infty \max\{[v']_{\alpha/2,t},[v'']_{\alpha/2,t}\}\|v'-v''\|_\infty,
\end{align*}
and hence, estimates \eqref{a1_holder_zest01}-\eqref{a1_holder_test01} follow with 
\begin{equation*}
L: = \max\big\{\|\psi\|_\infty/m_1, \, L_1\big\}. 
\end{equation*}

The proof of \eqref{ai_holder_est009} is similar.
\end{proof}


\begin{thebibliography}{AC}
\bibitem{Andrews:1998}
B. Andrews: Evolving convex curves. Calc. Var. Partial Differential Equations {\bf7} (1998), 315--371.

\bibitem{Andrews:2002}
B. Andrews: Singularities in crystalline curvature flows. Asian J. Math. {\bf6} (2002), 101--121.


\bibitem{AB:2011} B. Andrews, P. Bryan: Curvature bound for curve shortening flow via distance comparison and a direct proof of Grayson’s theorem.
J. Reine Angew. Math. {\bf 653} (2011), 179--187.


\bibitem{BHM:2018}  P. Baldi, E. Haus, C. Mantegazza: Non-existence of theta-shaped self-similarly shrinking networks moving by curvature. Comm. Partial Differential Equations {\bf 43} (2018), 403--427.


\bibitem{BHM:2017}
P. Baldi, E. Haus, and C. 
Mantegazza:  Networks self-similarly moving by curvature with two triple junctions. Atti Accad. Naz. Lincei
Rend. Lincei Mat. Appl. {\bf28} (2017), 323--338.

\bibitem{BHM:2017_2}
P. Baldi, E. Haus, and C. Mantegazza: On the classification of networks self-similarly moving by curvature. Geom. Flows {\bf2} (2017), 125--137.



\bibitem{Bellettini:2004} G. Bellettini: Anisotropic and crystalline mean curvature flow,
A Sampler of Riemann-Finsler Geometry (D. Bao, R. L. Bryant, S.-S. Chern, Z. Shen eds.),
{\it Mathematical Sciences Research Institute Publications} {\bf 50} (2004), 49--83.

\bibitem{BBP:2001} G. Bellettini, P. Butt\'a, E. Presutti: Sharp interface limits for non-local anisotropic interactions.  Arch. Rational Mech. Anal. {\bf 159} (2001), 109--135.

\bibitem{BCChN:2005} G. Bellettini, V. Caselles, A. Chambolle, M. Novaga: Crystalline mean curvature flow of convex sets. Arch. Rational Mech. Anal. {\bf 179} (2005), 109--152.


\bibitem{BCN:2006} G. Bellettini, M. Chermisi, M. Novaga: Crystalline curvature flow of planar networks. Interfaces Free Bound. {\bf8} (2006), 481--521.



\bibitem{BH:2018.siam} G. Bellettini, Sh.Yu. Kholmatov: Minimizing movements for mean curvature flow of partitions. SIAM J. Math. Anal. {\bf 50} (2018), 4117--4148.

\bibitem{BChKh:2020} G. Bellettini, A. Chambolle, Sh. Kholmatov: Minimizing movements for forced anisotropic mean curvature flow of partitions with mobilities. Proc. Royal Soc.  Edinburgh Section A: Mathematics {\bf 151} (2021), 1135--1170.

\bibitem{BNP:2001} G. Bellettini, M. Novaga, M. Paolini: On a crystalline variational problem, Part I: first variation and global $L^\infty$ regularity. Arch. Rational Mech. Anal. {\bf157} (2001), 165--191.


\bibitem{BNP:1999} G. Bellettini, M. Novaga, M. Paolini: Facet-breaking for three-dimensional crystals evolving by mean curvature. Interfaces Free Bound. {\bf1} (1999), 39--55.


\bibitem{BNR:2003} G. Bellettini, M. Novaga,  G. Riey: First 
variation of anysotropic energies and crystalline mean curvature for 
partitions. Interfaces Free Bound. \textbf{5} (2003), 331--356.

\bibitem{BBBP:1997} O. Benois, T. Bodineau, P. Butt\'a, E. Presutti: On the validity of the van der Waals theory of surface
tension. Markov Process. Related Fields {\bf 3} (1997) 175--198.


\bibitem{BR:1993} L. Bronsard, F. Reitich: On three-phase boundary motion and the singular limit of a vector-valued Ginzburg-Landau equation. Arch. Rational Mech. Anal. {\bf124} (1993), 355--379.


\bibitem{Cahn:1991}  J. Cahn: Stability, microstructural evolution, grain growth, and coarsening in a two-dimensional two-phase microstructure. Acta Metall. Mater. {\bf 39} (1991), 2189--2199.

\bibitem{Cioranescu:} I. Cioranescu: Geometry of Banach Spaces, Duality Mappings and Nonlinear Problems.  Series: Mathematics and its applications {\bf 62}. Kluwer Academic Publishers, 1990.

\bibitem{CHT:1992}
J.W. Cahn, C.A. Handwerker, J.E. Taylor: Geometric models of crystal growth. Acta Metall.
Mater. {\bf40} (1992), 1443--1474.



\bibitem{CK:1994}  J. Cahn, G. Kalonji: Symmetries of grain boundary trijunctions. J. Phys. Chem. Solids {\bf 55} (1994), 1017--1022.



\bibitem{ChG:2007} X. Chen, J.-S. Guo: Self–similar solutions of a 2-D multiple–phase curvature flow. Phys. D {\bf229} (2007), 22--34.



\bibitem{CDR:1989} J. De Coninck, F. Dunlop, V. Rivasseau: On the microscopic validity of the Wulff construction and of the generalized Young equation. Commun. Math. Phys. {\bf121} (1989), 401--419.
 
 

\bibitem{DKSch:1993} R.L. Dobrushin, R. Koteck\'y,  S.B. Shlosman: A microscopic justification of the Wulff construction. 
J. Stat. Phys. {\bf 72} (1993), 1--14.


\bibitem{ELM:2021} Y. Epshteyn, Ch. Liu, M. Mizuno: Motion of grain boundaries with dynamic lattice misorientations and with triple junctions drag.
SIAM J. Math. Anal.
{\bf 53} (2021), 3072--3097.


\bibitem{GN:2000} H. Garcke, B. Nestler: A mathematical model for grain growth in thin metallic films. Math. Models Methods Appl. Sci. {\bf10} (2000), 895--921.


\bibitem{GGM:1998} Y. Giga, M. Gurtin, J. Matias: On the dynamics of crystalline motion. Japan J. Indust. Appl. Math. {\bf 15} (1998), 7--50.


\bibitem{GP:2016} Y. Giga, N. Poz\v{a}r: A level set crystalline mean curvature flow of surfaces. Adv. Differential Equations {\bf21} (2016), 631--698.

\bibitem{GP:2018} Y. Giga, N. Poz\v{a}r: Approximation of general facets by regular facets with respect to anisotropic total variation energies and its application to crystalline mean curvature flow. Comm. Pure Appl. Math. {\bf71} (2018), 1461--1491.

\bibitem{GP:2022} Y. Giga, N. Poz\v{a}r: Motion by crystalline-like mean curvature: A survey.
Bull. Math. Sc. {\bf 12} (2022).


\bibitem{Herring:1951}
 C. Herring: Some theorems on the free energies of crystal surfaces. Phys. Rev. {\bf82} (1951), 87--93.

\bibitem{Herring:1999} C. Herring: Surface tension as a motivation for sintering. Fundamental Contributions to the Continuum Theory of Evolving Phase Interfaces in Solids. Springer, Berlin, Heidelberg, 1999.
 
 
\bibitem{INS:2014} T. Ilmanen, A. Neves, F. Schulze: On short time existence for the planar network flow. J. Differential Geom. {\bf 111} (2019), 39--90.


\bibitem{KL:2001} D. Kinderlehrer, C. Liu: Evolution of grain boundaries. Math. Models Methods Appl. Sci.  {\bf11} (2001), 713--729.


\bibitem{KNP:2021} H. Kr\"oner, M. Novaga, P. Pozzi: Anisotropic curvature flow of immersed networks. Milan J. Math. {\bf89} (2021) 147--186.



\bibitem{LO:2016} T. Laux, F. Otto: Convergence of the thresholding scheme for multi-phase mean curvature flow. Calc. Var. Partial Differential Equations  {\bf 55} (2016), 55--129.


\bibitem{LMPS:2021} J. Lira, R. Mazzeo, A. Pluda, M. Saez:
Short-time existence for the network flow.  Preprint, arXiv:2101.04302 [math.DG].




\bibitem{MMN:2016} A. Magni, C. Mantegazza, M. Novaga: Motion by curvature of planar networks II. Ann. Sc. Norm. Sup. Pisa {\bf15} (2016), 117--144.


\bibitem{MNP:2017} C. Mantegazza, M. Novaga, A. Pluda: Motion by curvature of networks with two triple junctions.
Geom. Flows {\bf2} (2017), 18--48.


\bibitem{MNPS:2016} C. Mantegazza, M. Novaga, A. Pluda, F. Schulze: Evolution of networks with multiple junctions. arXiv:1611.08254 [math.DG]. 

\bibitem{MNT:2004}
C. Mantegazza, M. Novaga, V. M. Tortorelli: Motion by curvature of planar networks. Ann. Scuola Norm. Sup. Pisa Cl. Sci. (5) {\bf 3} (2004), 235--324.
 
\bibitem{Mullins:1956} W. Mullins: Two-dimensional motion of idealized grain boundaries. J. Appl. Phys. {\bf27} (1956), 900--904.    

\bibitem{Schl:2000} S.B. Shlosman: The Wulff construction in statistical mechanics and combinatorics. Russ. Math. Surv. {\bf56} (2001).


\bibitem{Sol:1965} V. Solonnikov: On boundary value problems for linear parabolic systems of differential equations of general form. Trudy Mat. Inst. Steklov {\bf83} (1965),  3--163.

\bibitem{Taylor:1978} J.E. Taylor: Crystalline variational problems. Bull. Amer. Math. Soc. {\bf 84} (1978), 568--588.

\bibitem{Taylor:1991} J.E. Taylor: Constructions and conjectures in crystalline nondifferential geometry. Differential Geometry, in A symposium in honour of Manfredo do Carmo, Pitman Monogr. Surveys Pure Appl. Math. {\bf52} (1991), 321-336.

\bibitem{Taylor:1992} J.E. Taylor: II - mean curvature and weighted mean curvature. Acta Metall. Mater. {\bf40} (1992), 1475--1485.

\bibitem{Taylor:1993} J.E. Taylor: Motion of curves by crystalline curvature, including triple junctions and boundary points. Differential Geometry, Proc. Sympos. Pure Math. {\bf54} (1993), 417--438.

\bibitem{TCH:1992} J.E. Taylor, J.W. Cahn, C.A. Handwerker: Overview No. 98 I -  Geometric models of crystal growth. Acta Metall. Mater. {\bf40} (1992), 1443--1474.

\bibitem{Tonegawa:2019} Y. Tonegawa: Brakke's Mean Curvature Flow. Springer briefs in Mathematics,   Singapore, 2019.

\end{thebibliography}
\end{document}